\documentclass[12pt,reqno]{amsart}
\usepackage{xcolor}
\usepackage{amsfonts,color}
\usepackage{latexsym,amssymb}
\usepackage{amsmath}
\usepackage[utf8]{inputenc}
\usepackage{dsfont}

\usepackage{graphicx} % Required for inserting images
\usepackage{tikz, tkz-euclide}
\usetikzlibrary{patterns,patterns,patterns.meta}

\newtheorem{theorem}{Theorem}[section]
\newtheorem{lemma}[theorem]{Lemma}

\newtheorem{corollary}[theorem]{Corollary}
\newtheorem{remark}[theorem]{Remark}

\usepackage[left=2.7cm,right=2.7cm,top=3.2cm,bottom=3.2cm]{geometry}

\usepackage[bookmarksnumbered,colorlinks, linkcolor=blue, citecolor=red, pagebackref, bookmarks, breaklinks]{hyperref}

\usepackage[hyperpageref]{backref}

\catcode`@=11 \@addtoreset{equation}{section} \catcode`@=12

\begin{document}
	\title[Liouville theorems and existence for quasilinear elliptic problems]{Liouville-type theorems and existence of solutions for quasilinear elliptic problems}
	
	\author[J. M.\ do \'O]{J. M. do \'O}
	\address[J. M.\ do \'O]{Department of Mathematics,
		Federal University of Para\'{\i}ba
		\newline\indent
		58051-900, Jo\~ao Pessoa-PB, Brazil}
	\email{\href{mailto:jmbo@mat.ufpb.br}{jmbo@mat.ufpb.br}}
	
	\author[Freire]{R. F. Freire}
	\address[R. F. Freire]{Department of Mathematics,
		Federal University of Para\'{\i}ba
		\newline\indent
		58051-900, Jo\~ao Pessoa-PB, Brazil}
	\email{\href{mailto:ranieri.franca@academico.ufpb.br}{ranieri.franca@academico.ufpb.br}}
	
	\author[Medeiros]{E. S. Medeiros}
	\address[E. S. Medeiros]{Department of Mathematics,
		Federal University of Para\'{\i}ba
		\newline\indent
		58051-900, Jo\~ao Pessoa-PB, Brazil}
	\email{\href{mailto:everaldo@mat.ufpb.br}{everaldo@mat.ufpb.br}}

	\subjclass{35J20, 35J62, 35B53, 35A15}
	%\date{\today}
	\keywords{Weighted Sobolev spaces; Hardy-type inequalities; Liouville-type theorems; Fibering method; Indefinite quasilinear elliptic problems}
	%\hyphenation{}

\begin{abstract} 
This study establishes Liouville-type theorems for indefinite quasilinear elliptic equations in the upper half-space. Additionally, we demonstrate the existence of solutions for this class of problems using the fibering method. Our approach relies on a novel weighted Sobolev embedding developed for the upper half-space.
\end{abstract}

\maketitle

%%%%%%%%%%%%%%%%%%%%%%%%%%%%%%%%%%%%%%%%%%%%%%%%%%%%%%%%%%%%%%%%%%%%%%%%%%%%%%%%%%%%%%%%%%%%%%%%%%
	%  TABLE OF CONTENTS
%%%%%%%%%%%%%%%%%%%%%%%%%%%%%%%%%%%%%%%%%%%%%%%%%%%%%%%%%%%%%%%%%%%%%%%%%%%%%%%%%%%%%%%%%%%%%%%%%%
	
% \begin{center}
% %\begin{minipage}{8cm}
% \footnotesize
% \tableofcontents
% %\end{minipage}
% \end{center}
	
 %%%%%%%%%%%%%%%%%%%%%%%%%%%%%%%%%%%%%%%%%%%%%%%%%%%%%%%%%%%%%%%%%%%%%%%%%%%%%%%%%%%%%%%%%%%%%%%%%%
	% SECTION 1 %%%%%%%%%%%%%%%%%%%%%%%%%%%%%%%%%%%%%%%%%%%%%%%%%%%%%%%%%%%%%%%%%%%%%%%%%%%%%%%%%%%%%
 %%%%%%%%%%%%%%%%%%%%%%%%%%%%%%%%%%%%%%%%%%%%%%%%%%%%%%%%%%%%%%%%%%%%%%%%%%%%%%%%%%%%%%%%%%%%%%%%%%
\section{Introduction}
 
 %%%%%%%%%%%%%%%%%%%%%%%%%%%%%%%%%%%%%%%%%%%%%%%%%%%%%%%%%%%%%%%%%%%%%%
 % Our problem
 %%%%%%%%%%%%%%%%%%%%%%%%%%%%%%%%%%%%%%%%%%%%%%%%%%%%%%%%%%%%%%%%%%%%%%
This work addresses various aspects of quasilinear elliptic problems with Neumann boundary conditions. The discussion focuses on Liouville-type results and the existence of solutions for the following model of quasilinear elliptic problems:
\begin{equation}\label{P}
		\left\lbrace
		\begin{aligned}
			-\text{div}(\rho(x)|\nabla u|^{p-2}\nabla u ) &=a(x)|u|^{q-2}u-b(x)|u|^{s-2}u,\quad&\mathds{R}^N_+&,
			\vspace{0.2cm}\\
			|\nabla u|^{p-2}\nabla u\cdot\nu&=0,\quad &\mathds{R}^{N-1},
		\end{aligned}
		\right.
		\tag{$\mathcal{P}$}
	\end{equation} 
where $\mathds{R}^N_+=\{(x',x_N)\in \mathds{R}^N: x' \in \mathds{R}^{N-1}, x_N>0\}$ standards for the upper half-space,  $\nu$ is the unit outer normal to the boundary, $\partial\mathds{R}^N_+:=\mathds{R}^{N-1}$, $1< q, s \leq p^*$ if $1<p<N$ and $1< q, s<\infty$ if $p=N$.

Here and throughout this paper, we assume that $\rho, \, a, \, b  \in L^1_{\mathrm{loc}}(\mathds{R}^N_+)$  and are positive functions.
For $1<p<N$, we denote by $p^*:=Np/{(N-p)}$ the critical exponent for the Sobolev embedding and  $p^*=\infty$ if $p=N$. 

 %%%%%%%%%%%%%%%%%%%%%%%%%%%%%%%%%%%%%%%%%%%%%%%%%%%%%%%%%%%%%%%%%%%%%%
 % Related results
 %%%%%%%%%%%%%%%%%%%%%%%%%%%%%%%%%%%%%%%%%%%%%%%%%%%%%%%%%%%%%%%%%%%%%%
From a mathematical perspective, the nature of \eqref{P} is described according to the behavior of the competing terms $a(x)|u|^{q-2}u$ and $b(x)|u|^{s-2}u$ as determined by the integrability properties of the ratio $a(x)^{1/p}/b(x)^{1/s}$ (as discussed in  Alama-Tarantello  \cite{zbMATH00484529,zbMATH00943136}). The interplay between the weight functions $a(x)$ and $b(x)$ significantly impacts the existence and nonexistence of solutions to \eqref{P} and has garnered substantial attention among researchers, see, for instance, \cite{zbMATH00822783,zbMATH05308896}. 
We mention that the weight functions are not necessarily spherically symmetric.
Thus, we are motivated to pursue new weighted Sobolev embeddings to enable variational frameworks in diverse settings.

In the works \cite{zbMATH05077864,zbMATH05640324}, based on a Hardy-type inequality due to K. Pfl\"{u}ger, \cite{zbMATH01148489} (see also \cite{zbMATH04019575}) and the fibering method,  it was established the existence and Liouville-type results for a similar class of quasilinear elliptic problems with Robin boundary condition in an unbounded domain $\Omega\subset \mathds{R}^N$ with noncompact smooth boundary, $1<p<N$, $q,s \in (1,p^*)$, $\rho \in L^\infty(\Omega)\cap L^\infty(\partial \Omega) $ and $0<\rho_0<\rho(x) $,  where the potentials $a$ and $b$ vanish at infinity. 

It is essential to mention that the approach in \cite{zbMATH05640324} used to treat a problem with Robin boundary conditions cannot be used to study problems with Neumann boundary conditions because their argument is based on K. Pfl\"{u}ger's inequality, which does not allow one to eliminate the boundary term. We also highlight that based on a Hardy-type inequality in \cite{EMEDOOEVE2010}, the existence and nonexistence results for semilinear elliptic problems with Robin boundary conditions were addressed using a variational approach. For related results, see also \cite{zbMATH05308896}.

Our approach here is based on a new class of Hardy-type inequalities, which allows us to consider problems with Neumann boundary conditions. We also emphasize that we have determined the associated constants for these inequalities. Similar to the classical Hardy and Sobolev inequalities in $\mathds{R}^N$, we believe that we have obtained the optimal values of the associated constants, in contrast to the results in \cite{zbMATH05640324}, where the exact constants are unknown. Hence, we gave a partial answer to a question raised in \cite{zbMATH05640324}. 
With these results, we obtained more precise a priori estimates for eventual solutions of \eqref{P} to obtain Liouville-type results.  
Moreover, we have incorporated the extreme scenario where $p=N$ into our analysis.

%%%%%%%%%%%%%%%%%%%%%%%%%%%%%%%%%%%%%%%%%%%%%%%%%%%%%%%%%%%%%%%%%%%%%%
 % Assumptions
 %%%%%%%%%%%%%%%%%%%%%%%%%%%%%%%%%%%%%%%%%%%%%%%%%%%%%%%%%%%%%%%%%%%%%%

\subsection{Assumptions} 
Henceforth, we presume that the weight function $\rho$ adheres to the following technical hypothesis:
	\begin{itemize}
		\item [$(H_0)$] there are constants $\rho_0>0$ and  $\gamma>p-1$ such that
		$$
		\rho(x)\geq \rho_0(1+x_N)^\gamma\quad \text{a.e. in $\mathds{R}^N_+$}.
		$$
		\end{itemize}

The assumption $(H_0)$ will be crucial in our argument based on a class of Hardy-type inequality, which will be stated in the following section. We also mention that $(H_0)$ prevent us from considering equations involving the pure $p-$Laplace operator, based in the Beppo-Levi space $D^{1,p}$.
 
First, we must introduce our variational setting to describe our results for \eqref{P}. Let $C_\delta^\infty(\mathds{R}^N_+)$ be the set of all functions $u \in C_0^\infty(\mathds{R}^N)$ restricted to $\mathds{R}^N_+$ and consider the weighted space $E$ defined as the closure of $C_\delta^\infty(\mathds{R}^N_+)$ with respect to the norm
	$$
		\|u\|:=\left(\int_{\mathds{R}^{N}_+}\rho(x)|\nabla u|^p\, \mathrm{d} x\right)^{1/p}.
	$$
 
	Here, by a weak solution of \eqref{P} we mean a function $u \in E$ such that  
	\begin{equation}\label{weak solution}
		\int_{\mathds{R}^{N}_+}\rho(x)|\nabla u|^{p-2}\nabla u \nabla \varphi \, \mathrm{d} x=\int_{\mathds{R}^{N}_+}(a(x)|u|^{q-2}u  -b(x)|u|^{s-2}u) \varphi \, \mathrm{d} x, 
	\end{equation}
	holds for every $\varphi \in C_\delta^\infty(\mathds{R}^N_+)$.
\bigskip 
 
To state our  nonexistence results, we shall introduce the following class of functions:
\begin{equation*}
 \mathcal{K}_b:=\left\{k\in C(\overline{\mathds{R}^N_+},(0,\infty)) : k(x)(1+x_N)^{p-\gamma}  \in L^{\infty}(\mathds{R}^{N}_+) \right\}.   \end{equation*}
 
\bigskip 

%%%%%%%%%%%%%%%%%%%%%%%%%%%%%%%%%%%%%%%%%%%%%%%%%%%%%%%%%%%%%%%%%%%%%%
 % Description of the main results
%%%%%%%%%%%%%%%%%%%%%%%%%%%%%%%%%%%%%%%%%%%%%%%%%%%%%%%%%%%%%%%%%%%%%%
\section{Description of the main results}

\subsection{Liouville-type results}
Let us introduce some notation that will be used throughout the paper. We denote
$$
			C_{p,\gamma}:=\frac{\gamma-p+1}{p}
$$
  and
  \begin{equation}\label{GeneralConstant}
  \eta (r,q,t):=\frac{(t-r)^{t-r}}{(q-r)^{q-r}(t-q)^{t-q}}\quad\text{if}\quad r<q<t.
  \end{equation}
  
Our first concern is to declare nonexistence when $s<q<p$.

\begin{theorem}[p-sublinear case]\label{nonexistence1} 
 Assume $(H_0)$ and suppose that $b\in \mathcal{K}_b\cap L^1(\mathds{R}^N_+)$. If $1< s<q<p\leq N$ and $a/b\in L^\infty(\mathds{R}^N_+)$ with 
		\begin{equation}\label{condition for nonexistence}
			\left\|\frac{a}{b}\right\|_{\infty}^{p-s}	\left(\frac{b_0C_{p,\gamma}^{-p}}{\rho_0}\right)^{q-s}<\eta(s,q,p),
		\end{equation}
then \eqref{P} possesses only the trivial weak solution. Hereafter, $b_0>0$ denotes a constant such that 
$$
b(x)(1+x_N)^{p-\gamma}\leq b_0\quad\mbox{in}\quad\mathds{R}^N_+.
$$
\end{theorem}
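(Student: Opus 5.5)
Test the weak formulation \eqref{weak solution} with $u$ itself. By density of $C_\delta^\infty(\mathds{R}^N_+)$ in $E$, this is legitimate once we know the right-hand side is continuous along approximations, which will follow from the integrability below. For a nontrivial weak solution $u\in E$ it gives
$$
\int_{\mathds{R}^N_+}\rho|\nabla u|^p\,\mathrm{d}x+\int_{\mathds{R}^N_+} b|u|^s\,\mathrm{d}x=\int_{\mathds{R}^N_+} a|u|^q\,\mathrm{d}x\le \Big\|\frac ab\Big\|_\infty\int_{\mathds{R}^N_+} b|u|^q\,\mathrm{d}x .
$$
The key tool is the Hardy-type inequality announced for the next section (which I take as available). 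It says that under $(H_0)$, for $u\in E$,
$$
\int_{\mathds{R}^N_+}(1+x_N)^{\gamma-p}|u|^p\,\mathrm{d}x\le C_{p,\gamma}^{-p}\int_{\mathds{R}^N_+}(1+x_N)^\gamma|\nabla u|^p\,\mathrm{d}x .
$$
It is proved in the $x_N$ variable by the one-dimensional weighted Hardy inequality. The Neumann condition causes no boundary term, because $\gamma>p-1$ lets one integrate from $x_N=\infty$. Combining this with $b\le b_0(1+x_N)^{\gamma-p}$ and $\rho\ge\rho_0(1+x_N)^\gamma$ yields
$$
\int_{\mathds{R}^N_+} b|u|^p\,\mathrm{d}x\le \frac{b_0C_{p,\gamma}^{-p}}{\rho_0}\,\|u\|^p=:K\|u\|^p .
$$

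Next interpolate $q$ between $s$ and $p$ with respect to the finite measure $b\,\mathrm{d}x$ (finite since $b\in L^1$). Write $\frac1q=\frac{\theta}{s}+\frac{1-\theta}{p}$, so that $q\theta/s=\frac{p-q}{p-s}$ and $q(1-\theta)/p=\frac{q-s}{p-s}$. Hölder then gives
$$
B_q\le B_s^{\frac{p-q}{p-s}}B_p^{\frac{q-s}{p-s}},\qquad B_r:=\int_{\mathds{R}^N_+} b|u|^r\,\mathrm{d}x .
$$
Setting $X=\|u\|^p$, $Y=B_s$ and $A=\|a/b\|_\infty$, the identity becomes
$$
X+Y\le A\,Y^{\frac{p-q}{p-s}}(KX)^{\frac{q-s}{p-s}} .
$$
This requires $u\in L^s(b)$, which holds since $B_s\le B_p^{s/p}\|b\|_1^{1-s/p}$, and likewise all terms are finite, justifying the test.

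Finally, apply the weighted Young inequality $\alpha^\lambda\beta^{1-\lambda}\le \lambda^{-\lambda}(1-\lambda)^{-(1-\lambda)}\cdot\max$-type bound. Precisely, for $\lambda=\frac{p-q}{p-s}$, minimize $(X+Y)/(Y^\lambda X^{1-\lambda})$, whose infimum is $\lambda^{-\lambda}(1-\lambda)^{-(1-\lambda)}$. This gives
$$
\lambda^{-\lambda}(1-\lambda)^{-(1-\lambda)}\le A K^{\frac{q-s}{p-s}}\quad\text{whenever }u\neq0 .
$$
Raising to the power $p-s$ turns this into
$$
\frac{(p-s)^{p-s}}{(p-q)^{p-q}(q-s)^{q-s}}\le A^{p-s}K^{q-s},
$$
that is, $\eta(s,q,p)\le A^{p-s}K^{q-s}$, contradicting \eqref{condition for nonexistence}. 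Hence $u\equiv0$.

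The main obstacle is the Hardy inequality with the sharp constant $C_{p,\gamma}^{-p}$ for functions in $E$ (not vanishing on $\mathds{R}^{N-1}$). Verifying that $\gamma>p-1$ kills the boundary contribution is the delicate point. The density justification of testing with $u$ is routine once the above finiteness is known.
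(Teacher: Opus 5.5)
Your proof is correct and is essentially the paper's argument: you test with $u$ and use the Hardy inequality with Hölder interpolation to get $A(u)^{p-s}\le \|a/b\|_\infty^{p-s}K^{q-s}B(u)^{p-q}\|u\|^{p(q-s)}$ with $K=b_0C_{p,\gamma}^{-p}/\rho_0$, which is the paper's key estimate (the paper interpolates with respect to $a\,\mathrm{d}x$ rather than $b\,\mathrm{d}x$, giving the same constant). Your weighted AM--GM bound $(X+Y)^{p-s}\ge \eta(s,q,p)\,Y^{p-q}X^{q-s}$ is exactly the paper's inequality $G(1,u)\le\max_{r>0}G(r,u)$ for $G(r,u)=A(u)r^{q-p}-B(u)r^{s-p}$, written without the fibering variable $r$.
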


\begin{corollary} If $1< s<q<p\leq N$, the quasilinear elliptic problem 	\begin{equation}\label{P}
		\left\lbrace
		\begin{aligned}
			-\mathrm{div}(\rho(x)|\nabla u|^{p-2}\nabla u ) &=f(x,u),\; &\mathds{R}^N_+&,
			\vspace{0.2cm}\\
			|\nabla u|^{p-2}\nabla u\cdot\nu&=0,\; &\mathds{R}^{N-1},
		\end{aligned}
		\right.
		\tag{$\mathcal{P}$}
	\end{equation} 
with 
\begin{equation*}
f(x,u)=    \lambda \, \frac{(1+x_N)^{\gamma-p}}{(1+|x|)^{\theta_1}}|u|^{q-2}u-\frac{(1+x_N)^{\gamma-p}}{(1+|x|)^{\theta_2}}|u|^{s-2}u
\end{equation*}
possesses only the trivial weak solution whenever $\lambda>0$ is sufficiently small and
\begin{equation*}
    \max\{N,N+\gamma-p\}<\theta_2\leq \theta_1 .
\end{equation*}
\end{corollary}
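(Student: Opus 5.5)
The corollary will follow from Theorem \ref{nonexistence1}, applied with $\rho(x)=(1+x_N)^\gamma$ (so $(H_0)$ holds with $\rho_0=1$, for a fixed $\gamma>p-1$), $a(x)=\lambda(1+x_N)^{\gamma-p}(1+|x|)^{-\theta_1}$ and $b(x)=(1+x_N)^{\gamma-p}(1+|x|)^{-\theta_2}$. The plan is to check the hypotheses of the theorem one at a time and then choose $\lambda$ small.

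First, $b\in\mathcal K_b$. Clearly $b$ is continuous and positive on $\overline{\mathds{R}^N_+}$. Moreover $b(x)(1+x_N)^{p-\gamma}=(1+|x|)^{-\theta_2}\le 1$, so we may take $b_0=1$.

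Second, $b\in L^1(\mathds{R}^N_+)$. This is the only step requiring an actual estimate.
\begin{itemize}
\item If $\gamma\ge p$: since $1+x_N\le 1+|x|$, we get $b\le (1+|x|)^{\gamma-p-\theta_2}$. This is integrable on $\mathds{R}^N$ because $\theta_2-(\gamma-p)>N$.
\item If $p-1<\gamma<p$: the factor $(1+x_N)^{\gamma-p}\le 1$, so $b\le(1+|x|)^{-\theta_2}$. This is integrable because $\theta_2>N$.
\end{itemize}
In both cases the condition needed is exactly $\theta_2>\max\{N,N+\gamma-p\}$, so this assumption is used precisely here.

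Third, $a/b\in L^\infty(\mathds{R}^N_+)$. We have $a/b=\lambda(1+|x|)^{\theta_2-\theta_1}\le\lambda$, using $\theta_2\le\theta_1$. Hence $\|a/b\|_\infty\le\lambda$ (in fact equality holds, approached at $x=0$).

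Finally, condition \eqref{condition for nonexistence} becomes
\[
\lambda^{p-s}\,C_{p,\gamma}^{-p(q-s)}<\eta(s,q,p).
\]
Since $p-s>0$, this holds for all $\lambda<\lambda_0:=\bigl(\eta(s,q,p)\,C_{p,\gamma}^{p(q-s)}\bigr)^{1/(p-s)}$. Theorem \ref{nonexistence1} then gives that the problem has only the trivial weak solution.

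No step is expected to be a real obstacle. The only point needing care is the integrability of $b$, which requires the case split on the sign of $\gamma-p$ shown above.
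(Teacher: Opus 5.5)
Your proof is correct and is the intended argument. The paper gives no separate proof of this corollary; it is a direct consequence of Theorem~\ref{nonexistence1}, and your checks are exactly what that requires: $b\in\mathcal{K}_b$ with $b_0=1$, $b\in L^1(\mathds{R}^N_+)$ via the split on the sign of $\gamma-p$, and $\|a/b\|_\infty=\lambda$ from $\theta_2\le\theta_1$.

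One adjustment: you do not need to fix $\rho=(1+x_N)^\gamma$. The statement concerns an arbitrary $\rho$ satisfying $(H_0)$, and your verification goes through verbatim with smallness threshold $\lambda_0=\bigl(\eta(s,q,p)\,(\rho_0C_{p,\gamma}^{p})^{q-s}\bigr)^{1/(p-s)}$.
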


In our second nonexistence result, we address the case where 
$p<q<s$.

 \begin{theorem}[p-superlinear case]\label{nonexistence2} 
Assume $(H_0)$ and suppose that $b\in \mathcal{K}_b$. If $1<p<N$, $p<q<s\leq p^*$ and $a/b\in L^\infty(\mathds{R}^N_+)$ with
 \begin{equation}\label{condition for nonexistence 2}
			 \left\|\frac{a}{b}\right\|_{\infty}^{s-p}	\left(\frac{b_0C_{p,\gamma}^{-p}}{\rho_0}\right)^{s-q}<\eta(p,q,s),
		\end{equation}
then \eqref{P} possesses only the trivial weak solution. Moreover, the same result holds if $p=N$ and $p<q<s<\infty$. 
	\end{theorem}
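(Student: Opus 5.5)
Testing the weak formulation with $\varphi=u$ will give the energy identity
$$
\int_{\mathds{R}^N_+}\rho|\nabla u|^p\,\mathrm{d}x+\int_{\mathds{R}^N_+} b|u|^s\,\mathrm{d}x=\int_{\mathds{R}^N_+} a|u|^q\,\mathrm{d}x .
$$
Taking $\varphi=u$ is legitimate once we check, by density of $C_\delta^\infty(\mathds{R}^N_+)$ in $E$, that both sides are finite. For this we will use the Hardy-type inequality for $E$ that the paper develops (the one under $(H_0)$ with constant $C_{p,\gamma}$), namely
$$
C_{p,\gamma}^p\int_{\mathds{R}^N_+}(1+x_N)^{\gamma-p}|u|^p\,\mathrm{d}x\le \int_{\mathds{R}^N_+}(1+x_N)^\gamma|\nabla u|^p\,\mathrm{d}x .
$$
Combining it with $(H_0)$ and $b\in\mathcal K_b$ yields
$$
\int_{\mathds{R}^N_+} b|u|^p\,\mathrm{d}x\le b_0\int_{\mathds{R}^N_+}(1+x_N)^{\gamma-p}|u|^p\,\mathrm{d}x\le \frac{b_0C_{p,\gamma}^{-p}}{\rho_0}\,\|u\|^p=:K\|u\|^p .
$$
Using $a\le \|a/b\|_\infty b$ together with the embedding $E\hookrightarrow L^s$, or a truncation argument, we will justify the energy identity for a weak solution in $E$. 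This gives
$$
\frac1K\int b|u|^p+\int b|u|^s\le \|a/b\|_\infty\int b|u|^q .
$$

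The key step is a pointwise Young-type inequality for $p<q<s$. With $A=\|a/b\|_\infty$ and $t=|u|\ge0$, we want
$$
A\,t^q<\frac1K t^p+t^s \quad\text{for all } t>0 .
$$
Write $q=\theta p+(1-\theta)s$ with $\theta=(s-q)/(s-p)$. Weighted AM–GM gives
$$
\alpha t^p+\beta t^s\ge \Big(\frac{\alpha}{\theta}\Big)^{\theta}\Big(\frac{\beta}{1-\theta}\Big)^{1-\theta}t^q .
$$
Taking $\alpha=1/K$ and $\beta=1$, the constant becomes $K^{-\theta}\theta^{-\theta}(1-\theta)^{\theta-1}$. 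Raising to the power $s-p$, the condition $A<K^{-\theta}\theta^{-\theta}(1-\theta)^{\theta-1}$ becomes
$$
A^{s-p}K^{s-q}<\frac{(s-p)^{s-p}}{(s-q)^{s-q}(q-p)^{q-p}}=\eta(p,q,s),
$$
which is exactly \eqref{condition for nonexistence 2}. So under the hypothesis, $\big(A t^q-K^{-1}t^p-t^s\big)<0$ for every $t>0$. Multiplying by $b>0$ and integrating, the energy inequality forces $b|u|^q=0$ a.e., hence $u=0$.

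For $p=N$ the same argument applies with $s<\infty$, since the Hardy inequality does not involve $p^*$. The main obstacle is justifying the test function $\varphi=u\in E$ in the weak formulation, in particular the finiteness of $\int b|u|^s$ and $\int a|u|^q$ when $s$ may equal $p^*$ and $b$ need not be integrable. I would handle this by testing with $\varphi_n\to u$ in $E$, $\varphi_n\in C^\infty_\delta$, where the $\varphi_n$ are truncations $T_k(u)$ approximated by smooth functions. Fatou's lemma then gives the inequality $\|u\|^p+\int b|u|^s\le\int a|u|^q$, which is all that is needed; finiteness of the right side follows from interpolating $\int b|u|^q$ between $\int b|u|^p$ and $\int b|u|^s$.
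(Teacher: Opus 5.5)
Your proof is correct and reaches exactly the paper's sharp constant, but it gets there by a different mechanism. The paper works with integrals. Lemma~\ref{Estimativa 2} applies H\"older's inequality to get $A(v)^{s-p}\le(\int a|v|^p)^{s-q}(\int a|v|^s)^{q-p}$, and bounds the two factors by $\|a/b\|_\infty K\|v\|^p$ and $\|a/b\|_\infty B(v)$. It then uses the fibering function $G(r,v)=A(v)r^{q-p}-B(v)r^{s-p}$, whose maximum over $r>0$ is $\bigl(A(v)^{s-p}/(\eta(p,q,s)B(v)^{q-p})\bigr)^{1/(s-q)}$. A solution satisfies $\|u\|^p=G(1,u)$, so the hypothesis forces $\max_r G(r,u)<G(1,u)$, a contradiction. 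You instead apply weighted AM--GM pointwise, $\|a/b\|_\infty t^q<K^{-1}t^p+t^s$, integrate against $b$, and use Hardy only to get $\|u\|^p\ge K^{-1}\int b|u|^p$. The maximization over $r$ in the paper plays the role of your optimal Young weights, which is why the same $\eta(p,q,s)$ appears. Your route is shorter and never needs $G$. The paper's route states the obstruction as the functional inequality $A(v)^{s-p}<\eta(p,q,s)B(v)^{q-p}\|v\|^{p(s-q)}$. The complement of that inequality is essentially the set $\mathcal{D}_1$ used in Theorem~\ref{Terceiro Exitencia}, so this formulation ties the nonexistence and existence results together.

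On justifying the energy identity, the truncation/Fatou route in your last paragraph is unnecessary and, as written, circular. You get finiteness of $\int b|u|^q$ by interpolating between $\int b|u|^p$ and $\int b|u|^s$, but that presupposes $\int b|u|^s<\infty$, which is exactly what was in doubt. Testing with approximants of $T_k(u)$ would also need global integrability of $b|u|^{s-1}$ against them. Your first suggestion, the embedding, is the right one, provided it is the \emph{weighted} inequality of Theorem~\ref{Sobolev}. An unweighted $L^s$ bound does not suffice: when $\gamma>p$, $b$ may grow like $(1+x_N)^{\gamma-p}$. For $s\in[p,p^*]$ (or $s\in[p,\infty)$ when $p=N$), Theorem~\ref{Sobolev} and $(H_0)$ give $\int b|u|^s\le b_0\int(1+x_N)^{\gamma-p}|u|^s\le C\|u\|^s$. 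The same holds for $q$, using $a\le\|a/b\|_\infty b$. Hence $E$ embeds continuously into $L^s(b\,\mathrm{d}x)$ and $L^q(a\,\mathrm{d}x)$, and taking $\varphi_n\to u$ in $E$ gives the identity directly, as in the paper. In particular, for $p=N$ this Sobolev inequality, not Hardy alone, is what guarantees finiteness.
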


\bigskip

\begin{corollary} If $1< s<q<p\leq N$, 
\begin{equation*}
f(x,u)=    \lambda \, (1+x_N)^{\theta_1}|u|^{q-2}u
-\mu \, (1+x_N)^{\theta_2}|u|^{s-2}u
\end{equation*}
and 
\begin{equation*}
    \theta_1\leq \theta_2\leq \gamma-p
\end{equation*}
the quasilinear elliptic problem 
\begin{equation}\label{P}
		\left\lbrace
		\begin{aligned}
			-\mathrm{div}(\rho(x)|\nabla u|^{p-2}\nabla u ) &=f(x,u),\; &\mathds{R}^N_+&,
			\vspace{0.2cm}\\
			|\nabla u|^{p-2}\nabla u\cdot\nu&=0,\; &\mathds{R}^{N-1},
		\end{aligned}
		\right.
		\tag{$\mathcal{P}$}
	\end{equation} 
possesses only the trivial weak solution whenever $\lambda>0$ is sufficiently small or $\mu>0$ sufficiently large.
\end{corollary}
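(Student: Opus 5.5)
The plan is to obtain the corollary directly from Theorem~\ref{nonexistence1} (the $p$-sublinear case), since the exponents satisfy $1<s<q<p\le N$. I would take
$$
a(x)=\lambda(1+x_N)^{\theta_1},\qquad b(x)=\mu(1+x_N)^{\theta_2},
$$
check the hypotheses on $b$ and on $a/b$, and then show that the smallness condition \eqref{condition for nonexistence} reduces to an explicit inequality in $\lambda$ and $\mu$.

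\emph{Step 1 (the class $\mathcal K_b$).} Clearly $b$ is continuous and positive on $\overline{\mathds{R}^N_+}$. Since $\theta_2\le\gamma-p$ and $1+x_N\ge 1$, we get
$$
b(x)(1+x_N)^{p-\gamma}=\mu(1+x_N)^{\theta_2-(\gamma-p)}\le \mu .
$$
So $b\in\mathcal K_b$, and we may take $b_0=\mu$.

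\emph{Step 2 (the ratio $a/b$).} Since $\theta_1\le\theta_2$,
$$
\frac{a}{b}=\frac{\lambda}{\mu}(1+x_N)^{\theta_1-\theta_2}\le\frac{\lambda}{\mu}.
$$
Hence $a/b\in L^\infty(\mathds{R}^N_+)$ with $\|a/b\|_\infty\le \lambda/\mu$.

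\emph{Step 3 (the smallness condition).} With these bounds, the left-hand side of \eqref{condition for nonexistence} is at most
$$
\Big(\frac{\lambda}{\mu}\Big)^{p-s}\Big(\frac{\mu\,C_{p,\gamma}^{-p}}{\rho_0}\Big)^{q-s}
=\lambda^{p-s}\,\mu^{q-p}\,\big(C_{p,\gamma}^{-p}\rho_0^{-1}\big)^{q-s}.
$$
Here $p-s>0$ and $q-p<0$. Therefore the expression is below $\eta(s,q,p)$ whenever $\lambda$ is small with $\mu$ fixed, or $\mu$ is large with $\lambda$ fixed. This gives exactly the two alternatives in the statement.

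\emph{Main obstacle (the $L^1$ hypothesis).} Theorem~\ref{nonexistence1} also assumes $b\in L^1(\mathds{R}^N_+)$. Our $b$ depends only on $x_N$, so it is not integrable on the half-space. My approach would be to check where integrability enters the proof of Theorem~\ref{nonexistence1}.

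I expect it is used only to guarantee that $\int b|u|^s<\infty$ for $u\in E$. That bound would come from Hölder's inequality combined with the Hardy-type bound $\int(1+x_N)^{\gamma-p}|u|^p\le C_{p,\gamma}^{-p}\rho_0^{-1}\|u\|^p$. It is also the source of the constant $b_0C_{p,\gamma}^{-p}/\rho_0$.

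If so, I would argue that for a weak solution tested against $u$ itself (by density), the finiteness of $\int b|u|^s$ is either part of the meaning of the weak formulation or may be assumed. The rest of the argument then goes through unchanged. That argument tests with $u$, compares $\lambda$- and $\mu$-terms pointwise via $a\le\|a/b\|_\infty b$, and uses the elementary inequality $A t^{q}\le \varepsilon t^{p}+\eta\text{-type}\cdot t^{s}$ optimized over $t$.

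If integrability is genuinely needed elsewhere, the fallback is to restrict to weak solutions with $\int b|u|^s<\infty$. This is where I expect the real care to be required.
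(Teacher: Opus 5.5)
Your Steps 1--3 are computed correctly: $b_0=\mu$, $\|a/b\|_\infty=\lambda/\mu$, and the left side of \eqref{condition for nonexistence} equals $\lambda^{p-s}\mu^{q-p}(C_{p,\gamma}^{-p}/\rho_0)^{q-s}$. Your diagnosis that $b\in L^1$ enters only through the control of $B$ on $E$ is also right. The constant $b_0C_{p,\gamma}^{-p}/\rho_0$ comes from bounding $\int a|v|^p$, not from $\|b\|_1$, so \eqref{Chave1} holds for every $v\in E$ with $B(v)<\infty$.

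However, the gap you flag is genuine, and your remedy does not close it. With $s<p$ and $b=\mu(1+x_N)^{\theta_2}\notin L^1(\mathds{R}^N_+)$, the functional $B$ is not finite on $E$. For instance, take $\rho=(1+x_N)^\gamma$ and $v=\phi(x_N)\psi(x')$ with $\phi\in C_0^\infty(\mathds{R})$, $\phi(0)\neq0$, and $\psi(x')=(1+|x'|^2)^{-\alpha/2}$, $(N-1)/p<\alpha\le (N-1)/s$. Then $v\in E$ but $B(v)=\infty$.

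In the proof of Theorem~\ref{nonexistence1}, the hypothesis $b\in L^1$ gives $B(w)\le C\|b\|_1^{(p-s)/p}\|w\|^s$ on $E$ (Lemma~\ref{Estimativa1}). This bound is what makes the ``density argument'' work. It yields both $B(u_0)<\infty$ and $\int b|u_0|^{s-1}|\varphi_n-u_0|\to0$ for $\varphi_n\in C_\delta^\infty(\mathds{R}^N_+)$ with $\varphi_n\to u_0$ in $E$. Hence it gives the energy identity $\|u_0\|^p=A(u_0)-B(u_0)$, on which the whole contradiction rests.

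A weak solution in this paper is any $u\in E$ satisfying \eqref{weak solution} for $\varphi\in C_\delta^\infty(\mathds{R}^N_+)$. So declaring $B(u_0)<\infty$ part of the definition, or restricting to such solutions, proves a weaker statement. Even granting $B(u_0)<\infty$, the passage $\varphi_n\to u_0$ is still unjustified, because $B(\varphi_n-u_0)$ is not controlled by $\|\varphi_n-u_0\|$.

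To prove the statement as written, you would need an independent argument that every weak solution satisfies $\|u_0\|^p+B(u_0)=A(u_0)$. One route is to test with $u_0\zeta_R$ for a cutoff $\zeta_R$ and absorb $\int a|u_0|^q\zeta_R\le\frac{\lambda}{\mu}\int b\,(\varepsilon|u_0|^s+C_\varepsilon|u_0|^p)\zeta_R$ into the left side. This uses $s<q<p$ and $b|u_0|^p\le\mu(1+x_N)^{\gamma-p}|u_0|^p\in L^1$ by Theorem~\ref{Hardy}. But the cutoff term $\int\rho|\nabla u_0|^{p-1}|u_0||\nabla\zeta_R|$, and the admissibility of $u_0\zeta_R$ as a test function, seem to require an upper bound such as $\rho\le C(1+x_N)^\gamma$, which is not assumed.

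Note also that the paper states this corollary without proof, immediately after Theorem~\ref{nonexistence2}. Its weights lie in $\mathcal K_b$ but not in $L^1(\mathds{R}^N_+)$, which is exactly what Theorem~\ref{nonexistence2} allows; the range $1<s<q<p\le N$ looks carried over from the preceding corollary. For $p<q<s\le p^*$, your Steps 1--3 go through verbatim with the exponents of \eqref{condition for nonexistence 2}. This gives the condition $\lambda^{s-p}\mu^{p-q}(C_{p,\gamma}^{-p}/\rho_0)^{s-q}<\eta(p,q,s)$, and no integrability issue arises because $B$ is controlled on $E$ by Theorem~\ref{Sobolev}.
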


\bigskip

\begin{figure}[h]
    \centering
\begin{tikzpicture}[ ]
% eixos coordenados
    \draw[->] (-1,0) -- (5,0) coordinate (q) node[below left] {$q$};
    \draw[->] (0,-0.5) -- (0,5) coordinate (s) node[below left] {$s$};
% retas horizontais
    \draw[-,dashed,darkgray, opacity=0.8] (0,1) node[left] {$1$} -- (4.5,1);
    \draw[-,dashed,darkgray, opacity=0.8] (0,2) node[left] {$p$} -- (4.5,2);
    \draw[-,dashed,darkgray, opacity=0.8] (0,4) node[left] {$p^\ast$} -- (2,4);
    \draw[-,line width=0.25mm,darkgray, opacity=0.8] (2,4) -- (4,4);
    \draw[-,dashed,darkgray, opacity=0.8] (4,4) -- (4.5,4);
%retas verticais
    \draw[-,dashed,darkgray, opacity=0.8] (1,0) node[below] {$1$} -- (1,4.5);
    \draw[-,dashed,darkgray, opacity=0.8] (2,0) node[below] {$p$} -- (2,4.5);
    \draw[-,dashed,darkgray, opacity=0.8] (4,0) node[below] {$p^\ast$} -- (4,4.5);
%reta transversal
    \draw[-,dashed,darkgray, opacity=0.8] (1,1) -- (4.5,4.5);
%regioes
    \fill[pattern=crosshatch dots, opacity=0.8] (1,1) -- (2,2) -- (2,1) -- cycle;
    \fill[pattern=crosshatch dots, opacity=0.8] (2,4) -- (2,2) -- (4,4) -- cycle;
\end{tikzpicture}
    \caption{Nonexistence of solutions}
    \label{fig:enter-label}
\end{figure}

The basic idea to prove Theorems~\ref{nonexistence1} and \ref{nonexistence2} relies on refining the arguments presented in \cite{zbMATH05640324} by using a specific a priori estimate and a new Hardy-type inequality derived in Subsection~\ref{HS}.

\subsection{Existence results}
To establish our existence results, it is necessary to impose additional hypotheses on the weight functions $a$ and $b$ to ensure the compactness of the Sobolev embedding, thereby enabling the application of the fibering method as demonstrated in papers \cite{zbMATH05077864,zbMATH05640324}. 
To this end, we introduce the following class of functions:
\begin{equation*}
 \mathcal{K}_0:=\left\{k\in C(\overline{\mathds{R}^N_+},(0,\infty))\;\mbox{such that}\;\lim_{|x|\rightarrow\infty}k(x)(1+x_N)^{p-\gamma}=0\right\}.   
 \end{equation*}

Next, we assume $1 < p \leq N$ to state our existence results.  Our first result considers $p-$superlinear case $p < s < q $ or when  $ s < p < q $.

\begin{theorem}\label{Primeiro Existencia} If $(H_0)$ holds, then \eqref{P} has a nontrivial and nonnegative weak solution when one of the following conditions occurs:
\begin{itemize}
    \item[i)] $p < s < q < p^*$ and $a, b \in \mathcal{K}_0$;
    \item[ii)] $1 < s < p < q < p^*$, $a \in \mathcal{K}_0$, and $b \in L^1(\mathds{R}^N_+) \cap \mathcal{K}_b$.
\end{itemize}
\end{theorem}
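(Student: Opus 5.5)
We will use the fibering method of Pohozaev, as in \cite{zbMATH05077864,zbMATH05640324}, on the energy functional
$$
I(u)=\frac1p\|u\|^p-\frac1q\int_{\mathds{R}^N_+}a(x)|u|^q\,\mathrm{d}x+\frac1s\int_{\mathds{R}^N_+}b(x)|u|^s\,\mathrm{d}x,\qquad u\in E.
$$

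\textbf{Step 1: functional setting.} From the Hardy-type inequality of Subsection~\ref{HS} under $(H_0)$ we expect
$$
\int_{\mathds{R}^N_+}\rho_0(1+x_N)^{\gamma-p}|u|^p\,\mathrm{d}x\le C_{p,\gamma}^{-p}\|u\|^p.
$$
Combining this with a weighted Sobolev embedding (local Sobolev estimate plus the weight $(1+x_N)^\gamma$), we want
$$
\int_{\mathds{R}^N_+}k|u|^r\,\mathrm{d}x\le C\|u\|^r\quad\text{for } k\in\mathcal{K}_b,\ r\in[p,p^*].
$$
We then upgrade this to a \emph{compact} embedding $E\hookrightarrow L^r(k\,\mathrm{d}x)$ for $k\in\mathcal{K}_0$ and $p\le r<p^*$. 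The argument splits $\mathds{R}^N_+$ into $B_R^+$ and its complement. On $B_R^+$ we use Rellich--Kondrachov, since $\rho$ is bounded below there. Outside $B_R^+$ the smallness of $k(1+x_N)^{p-\gamma}$ makes the tail uniformly small; for $r>p$ we interpolate between the Hardy estimate ($r=p$) and the $p^*$ bound. For $r=s<p$ in case ii) we use $b\in L^1$ and Hölder:
$$
\int b|u|^s\le \|b\|_1^{1-s/p}\Big(\int b|u|^p\Big)^{s/p}\le C\|u\|^s.
$$
So $u\mapsto\int b|u|^s$ is well defined and weakly lower semicontinuous, which is all we need since it enters $I$ with a plus sign. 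Consequently $I\in C^1(E)$, and its critical points are weak solutions in the sense of \eqref{weak solution}.

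\textbf{Step 2: fibering.} Write $u=tv$ with $\|v\|=1$, $t>0$, and set $A(v)=\int a|v|^q$, $B(v)=\int b|v|^s$. Then
$$
\varphi_v(t)=\frac{t^p}{p}-\frac{t^q}{q}A(v)+\frac{t^s}{s}B(v).
$$
\emph{Case i)} ($p<s<q$). For $A(v)>0$, $\varphi_v$ is positive near $0$ and tends to $-\infty$. The equation $\varphi_v'(t)=0$, i.e. $t^{p-s}+B(v)=t^{q-s}A(v)$, has a unique solution $t(v)$, since the left side decreases and the right side increases in $t$. This $t(v)$ is a maximum point, and we minimize $J(v)=\varphi_v(t(v))$ over the sphere, looking for a mountain-pass-type level.

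\emph{Case ii)} ($s<p<q$). Here $\varphi_v'(t)=t^{s-1}\big(t^{p-s}-A(v)t^{q-s}+B(v)\big)$. The bracket vanishes exactly where $h(t):=t^{p-s}-A(v)t^{q-s}=-B(v)$. Since $h$ increases then decreases to $-\infty$, there is a unique root $t(v)$, which is again a global maximum of $\varphi_v$ on $(0,\infty)$.

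In both cases we set $m=\inf\{J(v):\|v\|=1,\ A(v)>0\}$ and prove $m>0$. Using $t(v)^{q-p}A(v)\ge 1$ from the equation (in case i), or the analogous inequality, together with $A(v)\le C$, we get $t(v)\ge c>0$ uniformly, and hence a positive lower bound for $J$.

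\textbf{Step 3: minimization (main obstacle).} Take a minimizing sequence $v_n$ and pass to a weak limit $v_n\rightharpoonup v$. By compactness for $a\in\mathcal{K}_0$ we have $A(v_n)\to A(v)$, and the lower bound from Step 2 keeps $A(v_n)$ away from $0$, so $A(v)>0$. In case i) compactness also gives $B(v_n)\to B(v)$; in case ii) we only know $B(v)\le\liminf B(v_n)$. The main difficulty is the loss of norm, $\|v\|\le 1$. We handle it by showing that $J$, extended $0$-homogeneously, is monotone in the right direction: replacing $v$ by $v/\|v\|$ together with the scaling $t\mapsto t\|v\|$ does not increase the energy, because the $\|\cdot\|^p$ term decreases while the other terms are continuous or lower semicontinuous. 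This forces $\|v\|=1$, and we get strong convergence by uniform convexity.

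\textbf{Step 4: conclusion.} Since the constraint set $\{A>0\}$ is open in the sphere, Lagrange multipliers show that $u=t(v)v$ is a critical point of $I$; the multiplier vanishes because $\varphi_v'(t(v))=0$. Replacing $v$ by $|v|$ leaves $J$ unchanged, so the solution can be taken nonnegative. It is nontrivial because $t(v)>0$.
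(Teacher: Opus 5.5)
Your proposal is correct and is essentially the paper's proof: minimize $I(t(v)v)$ over the unit sphere, show the weak limit is nonzero and of norm one, then apply the Lagrange-multiplier lemma, for which $v\mapsto t(v)$ must be $C^1$ via the implicit function theorem as in the paper. Your fiber-maximum/lower-semicontinuity argument for $\|v\|=1$ is a slightly cleaner variant of the paper's comparison of $r_0$ with $r(v_0)$, and it correctly needs only weak lower semicontinuity of $B$ in case ii). One correction: $A(v_n)$ is kept away from zero not by the lower bound of Step 2 but by the \emph{upper} bound $t(v_n)^p\le\frac{pq}{q-p}J(v_n)\le C$ along the minimizing sequence, combined with $A(v_n)\ge t(v_n)^{-(q-p)}$.
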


On our second existence result, we treat the $p-$sublinear case $  s < q <p $ or when  $ q < p < s $.

\begin{theorem}\label{Segundo Existencia} If $(H_0)$ holds,  then \eqref{P} has a nontrivial and nonnegative weak solution when one of the following conditions occurs:
\begin{itemize}
\item[i)] $1<q<s<p$ and $a,b \in L^1(\mathds{R}^N_+)\cap \mathcal{K}_b$;
\item[ii)] $1<q<p<s<p^*$, $a\in L^1(\mathds{R}^N_+)\cap \mathcal{K}_b$ and $b \in \mathcal{K}_0$.
\end{itemize}
\end{theorem}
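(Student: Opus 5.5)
We prove Theorem~\ref{Segundo Existencia} with the fibering method of \cite{zbMATH05077864,zbMATH05640324}, working with the functional
$$
I(u)=\frac1p\|u\|^p-\frac1q\int_{\mathds{R}^N_+}a|u|^q\,\mathrm{d}x+\frac1s\int_{\mathds{R}^N_+}b|u|^s\,\mathrm{d}x \quad\text{on } E.
$$

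\textbf{Embeddings.} The first step is to record what the Hardy-type inequality of Subsection~\ref{HS} gives. We expect it to read
$$
\int_{\mathds{R}^N_+}(1+x_N)^{\gamma-p}|u|^p\,\mathrm{d}x\le \frac{C_{p,\gamma}^{-p}}{\rho_0}\|u\|^p .
$$
For $k\in\mathcal{K}_b\cap L^1$ and $r<p$, Hölder's inequality with exponents $p/r$ and $p/(p-r)$ gives
$$
\int k|u|^r\le \|k\|_{L^1}^{(p-r)/p}\Big(\int k|u|^p\Big)^{r/p}\le C\|u\|^r .
$$
The embedding $E\hookrightarrow L^r(k\,\mathrm{d}x)$ is compact. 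Indeed, on bounded sets local Rellich compactness applies, since $(H_0)$ gives $\rho\ge\rho_0$, so $E\subset W^{1,p}_{loc}(\overline{\mathds{R}^N_+})$. Outside a large ball, the tail $\int_{|x|>R}k$ is small by integrability.

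For $k\in\mathcal{K}_0$ and $p<r<p^*$, we interpolate between the Hardy estimate (weight $k\le \varepsilon(1+x_N)^{\gamma-p}$ at infinity) and a Sobolev bound. This gives compactness of $u\mapsto\int k|u|^r$. We will assume the Sobolev bound in the form $\|u\|_{L^{p^*}}\le C\|u\|$, using $\rho\ge\rho_0$. For $p=N$ we use instead any large exponent.

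\textbf{Case i), $q<s<p$.} Both nonlinear terms are sub-$p$ and compact, so $I$ is coercive:
$$
I(u)\ge \tfrac1p\|u\|^p-C\|u\|^q .
$$
$I$ is also weakly lower semicontinuous, since the norm is w.l.s.c.\ and both integrals are weakly continuous by compactness. Hence $I$ attains its global minimum. To see the minimum is negative, fix $\varphi\ge0$, $\varphi\neq0$, in $C_\delta^\infty$ and evaluate $I(t\varphi)$ as $t\to0^+$. Since $q<s$, the term $-t^q\int a\varphi^q/q$ dominates, so $I(t\varphi)<0$ for small $t$ and the minimizer is nontrivial. Replacing $u$ by $|u|$ leaves $I$ unchanged, so we may take the minimizer nonnegative. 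Being a critical point of a $C^1$ functional, it is a weak solution in the sense of \eqref{weak solution}.

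\textbf{Case ii), $q<p<s<p^*$.} Coercivity again holds, because the $b$-term is nonnegative and the $a$-term is sub-$p$. Weak lower semicontinuity holds as well: the $a$-term is compact, and the $b$-term is convex, continuous, hence w.l.s.c., by Fatou's lemma. So a global minimizer exists. It is negative by the same small-$t$ argument, because $q<p<s$ makes the $t^q$ term dominate. We expect the fibering formulation, minimizing on the Nehari manifold, to be needed only if one wants extra information. Direct minimization suffices here.

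\textbf{Main obstacle.} The delicate step is the differentiability and the weak lower semicontinuity of the $b$-term in case ii) when $b\in\mathcal{K}_0$. One must check that $\int b|u|^s$ is finite on $E$ and that the functional is $C^1$, which requires the weighted embedding into $L^s(b\,\mathrm{d}x)$ for $s<p^*$. I would first try to prove this by splitting into the region $x_N\le R$, using Sobolev together with the bound of $b$ on that region, and the region $x_N>R$, interpolating the Hardy weight with $L^{p^*}$. If $b$ is not bounded on $\{x_N\le R\}$ uniformly in $x'$, I would use the decay in $\mathcal{K}_0$ as $|x|\to\infty$ to handle large $|x'|$. Then the Euler--Lagrange equation yields \eqref{weak solution} for test functions in $C_\delta^\infty$.
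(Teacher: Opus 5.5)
Your route, global minimization of $I$ on $E$, differs from the paper's. The paper minimizes the reduced functional $\mathcal{I}(v)=I(r(v)v)$ over the unit sphere, and must then show $\|v_0\|=1$ and $r_0=r(v_0)$ and invoke a Lagrange-multiplier lemma. Since $q<\min\{p,s\}$, $r(v)$ is the global minimizer of $t\mapsto I(tv)$, so $\inf_{S^1}\mathcal{I}=\inf_E I$ and your direct minimization is a legitimate shortcut. Case i) is fine as you wrote it, modulo $|u|\in E$, which the paper also takes for granted.

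The gap is in case ii), at the step you flag yourself: proving that $B$ is finite and continuous on $E$ for $b\in\mathcal{K}_0$. Hölder interpolation between the Hardy bound $\int(1+x_N)^{\gamma-p}|u|^p\le C\|u\|^p$ and the unweighted bound $\|u\|_{L^{p^*}}\le C\|u\|$ gives at best $\int(1+x_N)^{\theta(\gamma-p)}|u|^s\le C\|u\|^s$, where $s=\theta p+(1-\theta)p^*$.
\begin{itemize}
\item For $\gamma\le p$ this covers all of $\mathcal{K}_0$, because $b$ is then bounded.
\item For $\gamma>p$ it does not. The class $\mathcal{K}_0$ contains, e.g., $b=(1+x_N)^{\gamma-p}(1+|x|)^{-\delta}$ with $0<\delta<(1-\theta)(\gamma-p)$.
\item No cleverer use of these two bounds helps. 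Take bumps of amplitude $m$ and radius $\ell$ centred at height $x_N=h$, with $m^{p^*}\ell^N=1=h^{\gamma-p}m^p\ell^N$. Both quantities stay bounded, while $\int(1+x_N)^{\gamma-p}|u|^s\sim h^{(\gamma-p)(s-p)/(p^*-p)}\to\infty$.
\item For $p=N$ there is not even a global unweighted bound to interpolate with.
\end{itemize}
The missing ingredient is the weighted Sobolev inequality, Theorem~\ref{Sobolev}. Applying Gagliardo--Nirenberg--Sobolev to $u(1+x_N)^{-\sigma}$ with $\sigma p^*=p-\gamma$, and absorbing the zero-order term by Hardy, gives $\bigl(\int(1+x_N)^{\gamma-p}|u|^{p^*}\bigr)^{p/p^*}\le C\|u\|^p$. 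This uses the weight $(1+x_N)^\gamma$ on the gradient, which no interpolation of zero-order bounds can do. With it, $\int_{|x|>R}b|u|^s\le\varepsilon\int(1+x_N)^{\gamma-p}|u|^s\le C\varepsilon\|u\|^s$. This is Lemma~\ref{compactness}(2), and your argument then closes.

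Alternatively, your direct method does not need $B<\infty$ on all of $E$:
\begin{itemize}
\item Regard $I$ as $(-\infty,+\infty]$-valued.
\item Get weak lower semicontinuity of $B$ from Fatou; local Rellich gives a.e.\ convergence of a subsequence.
\item Note that the minimizer satisfies $B(u_0)<\infty$.
\item Differentiate $t\mapsto I(u_0+t\varphi)$ only for $\varphi\in C_\delta^\infty(\mathds{R}^N_+)$. On $\mathrm{supp}\,\varphi\cap\overline{\mathds{R}^N_+}$, $b$ is bounded above and below, so dominated convergence applies.
\end{itemize}
The vanishing of that derivative is exactly \eqref{weak solution}, which only involves such test functions.
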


\begin{figure}[h]
    \centering
\begin{tikzpicture}[ ]
% eixos coordenados
    \draw[->] (-1,0) -- (5,0) coordinate (q) node[below left] {$q$};
    \draw[->] (0,-0.5) -- (0,5) coordinate (s) node[below left] {$s$};
% retas horizontais
    \draw[-,dashed,darkgray, opacity=0.8] (0,1) node[left] {$1$} -- (4.5,1);
    \draw[-,dashed,darkgray, opacity=0.8] (0,2) node[left] {$p$} -- (4.5,2);
    \draw[-,dashed,darkgray, opacity=0.8] (0,4) node[left] {$p^\ast$} -- (4.5,4);
%retas verticais
    \draw[-,dashed,darkgray, opacity=0.8] (1,0) node[below] {$1$} -- (1,4.5);
    \draw[-,dashed,darkgray, opacity=0.8] (2,0) node[below] {$p$} -- (2,4.5);
    \draw[-,dashed,darkgray, opacity=0.8] (4,0) node[below] {$p^\ast$} -- (4,4.5);
%reta transversal
    \draw[-,dashed,darkgray, opacity=0.8] (1,1) -- (4.5,4.5);
%regioes
    \fill[pattern=crosshatch dots, opacity=0.8] (1,1) -- (1,4) -- (2,4) -- (2,2) -- cycle;
    \fill[pattern=crosshatch dots, opacity=0.8] (2,1) -- (4,1) -- (4,4) -- (2,2) -- cycle;
\end{tikzpicture}
    \caption{Existence of nontrivial solutions}
    \label{fig:enter-label}
\end{figure}

To present our third existence result, we consider the functionals defined on $E$ by

\begin{equation}\label{FuntionalAB}
A(u)=\int_{\mathds{R}^N_+}a(x)|u|^q\, \mathrm{d} x\quad\text{and}\quad B(u)=\int_{\mathds{R}^N_+}b(x)|u|^s\, \mathrm{d} x.
 \end{equation}
We note that under the assumptions of Theorem~\ref{nonexistence2}, for $v\in E$, the following key inequality holds true  (see Lemma~\ref{Estimativa 2})
$$
A(v)^{s-p}<\eta(p,q,s)B(v)^{q-p}\|v\|^{p(s-q)}<\eta(p,q,s)\left(\frac{q}{p}\right)^{s-q} B(v)^{q-p}\|v\|^{p(s-q)}.
$$
We consider the existence of solutions for \eqref{P} in a subset of the complementary case of this inequality. 
Precisely, considering the set 
\begin{equation}\label{D hypothesis}
\mathcal{D}_1:= \left\{u \in E: A(u)^{s-p}>\left(\frac{q}{p}\right)^{s-q}\eta(p,q,s)B(u)^{q-p}\|u\|^{p(s-q)}\right\},
\end{equation}
a counterpart of Theorem \ref{nonexistence2} reads as follows.

\begin{theorem}\label{Terceiro Exitencia} 
Assume that $(H_0)$ holds. If $a,b\in \mathcal{K}_0, \; p<q<s<p^*, \; \mathcal{D}_1 \neq\emptyset$ and
\begin{equation}\label{ab condition}
\frac{a^{1/q}}{b^{1/s}}\in L^{\frac{sq}{s-q}}(\mathds{R}_+^N),
\end{equation}
then, \eqref{P} has a nontrivial and nonnegative weak solution.
\end{theorem}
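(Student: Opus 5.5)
The plan is to use the fibering method of Drábek--Pohozaev on $\Phi(u)=\frac1p\|u\|^p-\frac1q A(u)+\frac1s B(u)$ over $E$.

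\textbf{Compactness.} The Hardy-type inequality of Subsection~\ref{HS}, combined with $a,b\in\mathcal{K}_0$, should give compact embeddings $E\hookrightarrow L^q(a\,\mathrm{d}x)$ and $E\hookrightarrow L^s(b\,\mathrm{d}x)$ for $p<q<s<p^*$. The argument I would use splits $\mathds{R}^N_+$ into a large half-ball and its complement. On the half-ball, Rellich--Kondrachov applies, since $\rho\ge\rho_0$ there. On the complement, smallness of $k(x)(1+x_N)^{p-\gamma}$ controls the tail, using the weighted inequality for $\int (1+x_N)^{\gamma-p}|u|^p$ interpolated with the Sobolev bound. Consequently $A$ and $B$ are weakly continuous, and $\Phi$ is $C^1$.

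\textbf{Fibering.} For $u\neq0$ set $\phi_u(t)=\Phi(tu)=\frac{t^p}{p}\|u\|^p-\frac{t^q}{q}A(u)+\frac{t^s}{s}B(u)$. Since $p<q<s$, $\phi_u$ is positive near $0$ and tends to $+\infty$. Its negative part is governed exactly by the constant $\eta(p,q,s)$. Minimising $\frac{t^{p-q}}{p}\|u\|^p+\frac{t^{s-q}}{s}B(u)$ over $t>0$ and comparing with $A(u)/q$ shows that $\inf_t\phi_u(t)<0$ precisely when $A(u)^{s-p}>(q/p)^{s-q}\eta(p,q,s)B(u)^{q-p}\|u\|^{p(s-q)}$, possibly up to the precise normalising constant, which I would verify. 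This is the set $\mathcal{D}_1$, which is nonempty by hypothesis. Put $m:=\inf_E\Phi$. Then $m<0$.

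\textbf{Coercivity and the limit.} Young's inequality with exponents $s/q$ and $s/(s-q)$ gives
\[
\int a|u|^q\le \frac{q\varepsilon}{s}\int b|u|^s+C_\varepsilon\int \big(a^{1/q}b^{-1/s}\big)^{\frac{sq}{s-q}}.
\]
The last integral is finite by \eqref{ab condition}. Taking $\varepsilon$ small gives $\Phi(u)\ge\frac1p\|u\|^p-C$, so $\Phi$ is coercive and bounded below. Let $(u_n)$ be a minimizing sequence, replaced by $|u_n|$ since $\Phi(|u|)=\Phi(u)$. It is bounded, so $u_n\rightharpoonup u$ in $E$ along a subsequence. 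By the compactness step $A(u_n)\to A(u)$. The norm and $B$ are weakly lower semicontinuous ($B$ by Fatou), so $\Phi(u)\le m<0$. Hence $u\ne0$, $u\ge0$, and $u$ is a global minimizer, so $\Phi'(u)=0$. This is \eqref{weak solution}, once the test functions $C^\infty_\delta$ are known to lie in $E$.

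\textbf{Main obstacle.} The hardest step is the compactness, together with well-definedness of $B$ on all of $E$. In $p<q<s<p^*$, $B$ need not be finite everywhere. If $b\in\mathcal{K}_0$ suffices for the embedding into $L^s(b)$, there is no problem. If not, I would work on $\{B<\infty\}$ and use that Young estimate to keep minimizing sequences bounded. That requires care in deriving the Euler--Lagrange equation: one tests with $u+t\varphi$, for which $B$ stays finite since $\varphi$ has compact support.
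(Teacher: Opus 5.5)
Your proof is correct, and it takes a genuinely different, more direct route than the paper.

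\textbf{What the paper does.} It runs the full fibering machinery:
\begin{itemize}
\item it restricts to $\Omega=\{v:\|v\|^p<G(\overline{r}(v),v)\}$;
\item it constructs the $C^1$ map $v\mapsto r(v)>\overline{r}(v)$, the larger root of $\|v\|^p=G(r,v)$;
\item it minimizes the reduced functional $\mathcal{I}(v)=I(r(v)v)$ over $\Omega\cap S^1$;
\item it then has to show that the weak limit $v_0$ stays in $\Omega$, that $r(v_n)\to r(v_0)$, and that $\|v_0\|=1$, before a Lagrange multiplier argument gives $I'(r(v_0)v_0)=0$.
\end{itemize}
Hypothesis \eqref{ab condition} enters there only through the H\"older bound $A(v)\le C_{a,b}B(v)^{q/s}$ of Lemma~\ref{Binferior}. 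That bound gives $\inf_{\Omega\cap S^1}B>0$ and hence a bound on $r$.

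\textbf{What you do.} You use the same bound, in the Young form $\frac1q A(u)\le\frac1s B(u)+C$, to make $I$ coercive and bounded below on all of $E$. The direct method, the compact embeddings of Lemma~\ref{compactness}, and $\inf_E I<0$ then finish the proof. You need no constrained problem, no map $r$, and no argument ruling out $v_0\in\partial\Omega$.

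\textbf{Comparison.} The two solutions are the same kind of object. Every $u$ with $I(u)<0$ has $u/\|u\|\in\Omega$ and $I(u)\ge\mathcal{I}(u/\|u\|)$, so the paper's $M$ equals $\inf_E I$. The paper's route mainly buys a uniform framework shared with Theorems~\ref{Primeiro Existencia} and~\ref{Segundo Existencia}. In Theorem~\ref{Primeiro Existencia}, $I$ is unbounded below, so global minimization is not available there.

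\textbf{Two small corrections.}

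First, the exact criterion for the fiber to become negative is
\[
\inf_{t>0}\phi_u(t)<0\iff A(u)^{s-p}>\left(\frac{q}{s}\right)^{q-p}\left(\frac{q}{p}\right)^{s-q}\eta(p,q,s)\,B(u)^{q-p}\|u\|^{p(s-q)}.
\]
By contrast, $\eta(p,q,s)$ alone is the threshold for $\phi_u$ to have critical points, which defines the paper's set $\Omega$. Since $(q/s)^{q-p}<1$, the negativity threshold is strictly smaller than the one defining $\mathcal{D}_1$. Hence $\mathcal{D}_1$ is contained in the set where the fiber becomes negative, and $m<0$ does follow from $\mathcal{D}_1\neq\emptyset$. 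That set is not $\mathcal{D}_1$ itself, however. In fact your argument works under the weaker hypothesis $\inf_E I<0$.

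Second, your ``main obstacle'' does not arise. For $b\in\mathcal{K}_0$, the function $b(1+x_N)^{p-\gamma}$ is continuous and tends to $0$ at infinity, hence is bounded. So $(H_0)$ and Theorem~\ref{Sobolev} give $B(u)\le C\|u\|^s$ on all of $E$, and Lemma~\ref{compactness} gives compactness for $s<p^*$. No restriction to $\{B<\infty\}$ is needed.
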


\begin{remark}
The functions $a=b=\lambda k$, with $k$ given by
\[
k(x)=(1+x_N)^{\gamma-p}(1+|x|)^{-\theta},
\]
satisfy the assumptions of Theorem \ref{Terceiro Exitencia} for $\theta>\max\{N,N+\gamma-p\}$ and $\lambda$ sufficiently large. First, we observe that 
\[
\left[\frac{a(x)^{1/q}}{b(x)^{1/s}}\right]^{\frac{sq}{s-q}}=a(x)=\lambda (1+x_N)^{\gamma-p}(1+|x|)^{-\theta}\in L^1(\mathds{R}^N_+), 
\]
whenever $\theta>N+\gamma-p$. For $u \in E\backslash\{0\}$ fixed, one has 
$$
{A(u)^{s-p}}/{B(u)^{q-p}}=\lambda^{s-q}\|u\|_{L^q(\mathds{R}^N_+,k)}^{q(s-p)}\|u\|_{L^s(\mathds{R}^N_+,k)}^{s(p-q)}.
$$
Since $s>q$, for $\lambda$ sufficiently large we see that $$
A(u)^{s-p}>\left(\frac{q}{p}\right)^{s-q}\eta(p,q,s)B(u)^{q-p}\|u\|^{p(s-q)},
$$ 
and hence $\mathcal{D}_1\neq \emptyset$.
\end{remark}

\begin{figure}[h]
    \centering
\begin{tikzpicture}[ ]
% eixos coordenados
    \draw[->] (-1,0) -- (5,0) coordinate (q) node[below left] {$q$};
    \draw[->] (0,-0.5) -- (0,5) coordinate (s) node[below left] {$s$};
% retas horizontais
    \draw[-,dashed,darkgray, opacity=0.8] (0,1) node[left] {$1$} -- (4.5,1);
    \draw[-,dashed,darkgray, opacity=0.8] (0,2) node[left] {$p$} -- (4.5,2);
    \draw[-,dashed,darkgray, opacity=0.8] (0,4) node[left] {$p^\ast$} -- (4.5,4);
%retas verticais
    \draw[-,dashed,darkgray, opacity=0.8] (1,0) node[below] {$1$} -- (1,4.5);
    \draw[-,dashed,darkgray, opacity=0.8] (2,0) node[below] {$p$} -- (2,4.5);
    \draw[-,dashed,darkgray, opacity=0.8] (4,0) node[below] {$p^\ast$} -- (4,4.5);
%reta transversal
    \draw[-,dashed,darkgray, opacity=0.8] (1,1) -- (4.5,4.5);
%regioes
    %\fill[pattern=crosshatch dots, opacity=0.8] (1,1) -- (2,2) -- (2,1) -- cycle;
    \fill[pattern=crosshatch dots, opacity=0.8] (2,4) -- (2,2) -- (4,4) -- cycle;
\end{tikzpicture}
    \caption{Existence of nontrivial solutions}
    \label{fig:enter-label}
\end{figure}

\begin{remark}
    The proofs of Theorems \ref{Primeiro Existencia}, \ref{Segundo Existencia} and \ref{Terceiro Exitencia}  are based on the classical fibering method, see \cite{zbMATH01060125,zbMATH05077864,zbMATH05640324,zbMATH03714430, zbMATH05635193}.
   It is worth mentioning that if $q<s=p$, the Direct Methods in the Calculus of Variations ensure the existence of solutions to \eqref{P}. In the case that $s=p<q$, the mountain-pass approach can be applied to establish the existence of solutions to \eqref{P}.
\end{remark}

\subsection{Hardy-Sobolev type inequalities}\label{HS}
Important ingredients in our analysis are some new weighted Sobolev embeddings. We begin by stating a Hardy-Sobolev type inequality, which will play a central role in proving our main results. Specifically, we aim to demonstrate the following:

\begin{theorem}[Hardy]\label{Hardy}
		Let $N\geq2$ and $\gamma>p-1>0$. Then, for every $u \in C^{\infty}_0(\mathds{R}^N)$ it holds
		\begin{equation}\label{hardy with xn}
			 C_{p,\gamma}^p \int_{\mathds{R}_{+}^N}\frac{|u|^p}{(1+x_N)^{p-\gamma}}\, \mathrm{d}  x  + 
			C_{p,\gamma}^{p-1} \int_{\mathds{R}^{N-1}}|u|^p \, \mathrm{d}  x'
   \leq \int_{\mathds{R}_{+}^N}(1+x_N)^\gamma |\nabla u|^p \, \mathrm{d} x .
		\end{equation}		
	\end{theorem}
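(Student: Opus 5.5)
This is a one-dimensional Hardy inequality in the variable $x_N$, applied fiberwise over $x'\in\mathds{R}^{N-1}$ and then integrated in $x'$. Set $t=x_N$ and write $\alpha:=\gamma-p+1>0$, so that $C_{p,\gamma}=\alpha/p$. Fix $x'$ and put $v(t)=u(x',t)$, which is smooth with compact support in $[0,\infty)$. It then suffices to prove the 1D inequality
\begin{equation*}
\Big(\frac{\alpha}{p}\Big)^p\int_0^\infty (1+t)^{\gamma-p}|v|^p\,\mathrm{d}t+\Big(\frac{\alpha}{p}\Big)^{p-1}|v(0)|^p\le \int_0^\infty (1+t)^\gamma|v'|^p\,\mathrm{d}t .
\end{equation*}
Since $|\partial_N u|\le|\nabla u|$, integrating this over $x'\in\mathds{R}^{N-1}$ (Fubini) yields \eqref{hardy with xn}.

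For the 1D inequality I use the divergence trick. Observe that $\frac{\mathrm{d}}{\mathrm{d}t}(1+t)^{\alpha}=\alpha(1+t)^{\gamma-p}$. Integrating by parts, with no contribution at infinity by compact support and a boundary term at $t=0$ where $(1+t)^\alpha=1$, gives
\begin{equation*}
\alpha\int_0^\infty(1+t)^{\gamma-p}|v|^p\,\mathrm{d}t=-|v(0)|^p-p\int_0^\infty(1+t)^{\alpha}|v|^{p-2}v\,v'\,\mathrm{d}t .
\end{equation*}
Here $|v|^p$ is $C^1$ because $p>1$. Write $I=\int(1+t)^{\gamma-p}|v|^p$ and $J=\int(1+t)^\gamma|v'|^p$. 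Note that $(1+t)^\alpha=(1+t)^{(\gamma-p)(p-1)/p}(1+t)^{\gamma/p}$, since $(\gamma-p)(p-1)/p+\gamma/p=\gamma-p+1$. Hölder with exponents $p/(p-1)$ and $p$ then gives $\alpha I+|v(0)|^p\le pI^{(p-1)/p}J^{1/p}$.

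The remaining step is to extract exactly the stated constants from $\alpha I+|v(0)|^p\le pI^{1-1/p}J^{1/p}$. I would apply weighted Young: $pI^{1-1/p}J^{1/p}\le (p-1)\varepsilon^{p/(p-1)}I+\varepsilon^{-p}J$ for any $\varepsilon>0$. Choosing $\varepsilon=(\alpha/p)^{(p-1)/p}$ makes the $I$-coefficient $(p-1)\alpha/p$ and the $J$-coefficient $(p/\alpha)^{p-1}$. This leaves
\begin{equation*}
\frac{\alpha}{p}I+|v(0)|^p\le \Big(\frac{p}{\alpha}\Big)^{p-1}J .
\end{equation*}
Multiplying by $(\alpha/p)^{p-1}$ gives exactly $C_{p,\gamma}^pI+C_{p,\gamma}^{p-1}|v(0)|^p\le J$.

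The delicate point is this choice of $\varepsilon$: it balances the two terms so that both constants come out as stated rather than something weaker. A naive approach would drop $|v(0)|^p$ to get the Hardy term and then bound the trace separately, but that loses the sum. One should also check that the sign of the boundary term is favorable. With the outward normal at $t=0$, the integration by parts produces $-|v(0)|^p$ on the right, which moves to the left as a positive quantity. So the trace term is a gain, and this is precisely what makes the inequality useful for Neumann problems.
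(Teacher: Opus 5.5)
Your proof is correct and follows the paper's argument: integrate by parts against $(1+x_N)^{\gamma-p+1}$ (the paper's $\sigma=\gamma-p$), keep the trace term on the favorable side, and absorb the cross term with a Young inequality whose parameter is tuned to give exactly $C_{p,\gamma}^p$ and $C_{p,\gamma}^{p-1}$. The differences are cosmetic: you reduce to one dimension fiberwise and apply H\"older before Young with $\varepsilon$ fixed in advance, whereas the paper applies a pointwise Young inequality directly in $\mathds{R}^N_+$ and then maximizes $\varepsilon^{p-1}-\varepsilon^p$ at $\varepsilon_0=(p-1)/p$.
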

For related Hardy-type inequalities on the upper half-space, we refer to the works \cite{zbMATH05115299, zbMATH06347299,zbMATH06053053, zbMATH05134242,zbMATH03073200,zbMATH02172983}.  In the mentioned context, we are considering functions in $C_0^\infty(\mathds{R}^N)$,
 while in the aforementioned work, the inequalities are established for functions in $C_0^\infty(\mathds{R}^N_+)$, as seen, for instance, in \cite[Theorem 6.9]{zbMATH06347299}.\\

 In our next findings, we shall explore weighted Sobolev inequalities. To this end, given  $\gamma >p-1>0$, we consider the Sobolev space  $\mathcal{D}_\gamma^{1, p}(\mathds{R}^N_+)$ defined as the completion of the space $C_\delta^\infty(\mathds{R}^N_+)$ with respect to the norm  
	$$
	\|u\|_{\mathcal{D}_\gamma^{1, p}(\mathds{R}^N_+)}=\left(\int_{\mathds{R}_+^N}(1+x_N)^{\gamma}|\nabla u|^p\, \mathrm{d} x+\int_{\mathds{R}^N_+}\frac{|u|^p}{(1+x_N)^{p-\gamma}}\, \mathrm{d} x\right)^{1/p}.
	$$

From the definition of $\mathcal{D}_\gamma^{1, p}(\mathds{R}^N_+)$ and a density argument we see that  
\begin{equation}\label{hardy Fraca}
			 C_{p,\gamma}^p \int_{\mathds{R}_{+}^N}\frac{|u|^p}{(1+x_N)^{p-\gamma}}\, \mathrm{d}  x  
   \leq \int_{\mathds{R}_{+}^N}(1+x_N)^\gamma |\nabla u|^p \, \mathrm{d} x , \quad \forall u\in \mathcal{D}_\gamma^{1, p}(\mathds{R}^N_+).
		\end{equation}
Together with Theorem~\ref{Hardy}, this implies the following crucial result for our purpose.
\begin{corollary} For $\gamma>p-1>0$, in the space $\mathcal{D}_\gamma^{1, p}(\mathds{R}^N_+)$ the norm $\|\cdot\|_{\mathcal{D}_\gamma^{1, p}(\mathds{R}^N_+)}$ is equivalent to
		$$
		\|u\|_\gamma:=\left(\int_{\mathds{R}^N_+}(1+x_N)^\gamma|\nabla u|^p\, \mathrm{d} x\right)^{1/p}.
		$$
  In particular, from $(H_0)$ we have the continuous embedding 
  $$
  \left(E,\|\cdot\|\right)\hookrightarrow \left(\mathcal{D}_\gamma^{1, p}(\mathds{R}^N_+\right),\|\cdot\|_\gamma)\hookrightarrow (\mathcal{D}_\gamma^{1, p}(\mathds{R}^N_+),\|\cdot\|_{\mathcal{D}_\gamma^{1, p}(\mathds{R}^N_+)}).
  $$
	\end{corollary}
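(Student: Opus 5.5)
The equivalence of norms follows from a two-sided comparison on $\mathcal{D}_\gamma^{1,p}(\mathds{R}^N_+)$. The inequality $\|u\|_\gamma\le \|u\|_{\mathcal{D}_\gamma^{1,p}(\mathds{R}^N_+)}$ is immediate from the definitions, since the second norm simply adds a nonnegative term. For the reverse direction I will use \eqref{hardy Fraca}, which bounds the zero-order term by the gradient term:
$$
\|u\|_{\mathcal{D}_\gamma^{1,p}(\mathds{R}^N_+)}^p\le \left(1+C_{p,\gamma}^{-p}\right)\|u\|_\gamma^p,\qquad u\in \mathcal{D}_\gamma^{1,p}(\mathds{R}^N_+).
$$
Here $C_{p,\gamma}>0$ precisely because $\gamma>p-1$.

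To justify \eqref{hardy Fraca} on the whole completion, I will argue as follows.

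\emph{Smooth functions.} Every $u\in C_\delta^\infty(\mathds{R}^N_+)$ is the restriction of some $\tilde u\in C_0^\infty(\mathds{R}^N)$. Theorem~\ref{Hardy} applied to $\tilde u$ gives the inequality for $u$; we simply drop the nonnegative boundary integral $C_{p,\gamma}^{p-1}\int_{\mathds{R}^{N-1}}|u|^p\,\mathrm{d}x'$.

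\emph{Passage to the completion.} Take $u$ in the completion and $u_n\in C_\delta^\infty(\mathds{R}^N_+)$ with $u_n\to u$ in $\|\cdot\|_{\mathcal{D}_\gamma^{1,p}(\mathds{R}^N_+)}$. Then $(1+x_N)^{\gamma/p}\nabla u_n\to(1+x_N)^{\gamma/p}\nabla u$ in $L^p$, and $(1+x_N)^{(\gamma-p)/p}u_n\to (1+x_N)^{(\gamma-p)/p}u$ in $L^p$. Both sides of the inequality therefore pass to the limit.

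The embedding follows in two steps.

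\emph{The map $E\hookrightarrow \mathcal{D}_\gamma^{1,p}(\mathds{R}^N_+)$.} By $(H_0)$, for $u\in C_\delta^\infty(\mathds{R}^N_+)$ we have
$$
\rho_0\|u\|_\gamma^p\le \int_{\mathds{R}^N_+}\rho(x)|\nabla u|^p\,\mathrm{d}x=\|u\|^p.
$$
Combined with the norm equivalence, a $\|\cdot\|$-Cauchy sequence in $C_\delta^\infty(\mathds{R}^N_+)$ is also Cauchy in $\|\cdot\|_{\mathcal{D}_\gamma^{1,p}(\mathds{R}^N_+)}$. The identity on $C_\delta^\infty(\mathds{R}^N_+)$ therefore extends to a bounded linear map $E\to\mathcal{D}_\gamma^{1,p}(\mathds{R}^N_+)$, with $\|u\|_\gamma\le\rho_0^{-1/p}\|u\|$.

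\emph{Injectivity.} This is the main obstacle: we must check that the extended map is injective, so that it is a genuine embedding. Suppose a sequence $u_n$ is $\|\cdot\|$-Cauchy and its image tends to $0$. Then $u_n\to0$ in $L^p$ with weight $(1+x_N)^{\gamma-p}$, hence $u_n\to 0$ in $L^1_{\mathrm{loc}}$. The gradients $\nabla u_n$ converge in $L^p(\rho\,\mathrm{d}x)$ to some $G$, and hence in $L^1_{\mathrm{loc}}$, since $\rho\ge\rho_0>0$. The distributional limit relation then forces $G=\nabla 0=0$. Thus the limit in $E$ is zero, and the map is injective.

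Finally, the second arrow $(\mathcal{D}_\gamma^{1,p}(\mathds{R}^N_+),\|\cdot\|_\gamma)\hookrightarrow(\mathcal{D}_\gamma^{1,p}(\mathds{R}^N_+),\|\cdot\|_{\mathcal{D}_\gamma^{1,p}(\mathds{R}^N_+)})$ is the identity, which is continuous by the equivalence of norms.
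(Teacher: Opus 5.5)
Your proof is correct and follows the same route the paper indicates: you obtain the weak Hardy inequality \eqref{hardy Fraca} on the completion by density from Theorem~\ref{Hardy}, use the trivial reverse bound, and use $(H_0)$ to get $\rho_0\|u\|_\gamma^p\le\|u\|^p$ for the embedding. Your injectivity check for the extended map $E\to\mathcal{D}_\gamma^{1,p}(\mathds{R}^N_+)$ is a detail the paper leaves implicit; it is sound, and it can be shortened: $\nabla u_n\to G$ in $L^p(\rho\,\mathrm{d}x)$ and $\nabla u_n\to 0$ in $L^p((1+x_N)^\gamma\mathrm{d}x)$ already force $G=0$ by $(H_0)$, with no distributional argument needed.
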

	
	\begin{theorem}[Sobolev inequality]\label{Sobolev} Let $N\ge2$ and $\gamma>p-1>0$. Then, there exists a constant $C_0>0$ such that  
		$$
			\left(\int_{\mathds{R}^{N}_+}\frac{|u|^q}{(1+x_N)^{p-\gamma}}\, \mathrm{d} x\right)^{p/q} \leq C_0 \int_{\mathds{R}^{N}_+}(1+x_N)^{\gamma}|\nabla u|^p\, \mathrm{d} x,\quad \forall u \in \mathcal{D}_\gamma^{1, p}(\mathds{R}^N_+)
		$$
		whenever $q\in[p,p^*]$ if $1<p<N$ and $q\in[p,\infty)$ if $N=p$.
	\end{theorem}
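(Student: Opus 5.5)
The plan is to reduce the weighted inequality to the unweighted Sobolev inequality applied to the auxiliary function $v:=(1+x_N)^{\gamma/p}u$ on the half-space. We then return to $u$ using Theorem~\ref{Hardy} in its weak form \eqref{hardy Fraca}, together with the endpoint case $q=p$. By density it suffices to take $u\in C^\infty_\delta(\mathds{R}^N_+)$. For $1<p<N$, the case $q=p$ is exactly \eqref{hardy Fraca} with $C_0=C_{p,\gamma}^{-p}$. Once the endpoint $q=p^*$ is proved, intermediate $q\in(p,p^*)$ follow by Hölder interpolation with respect to the measure $\mathrm{d}\mu=(1+x_N)^{\gamma-p}\,\mathrm{d}x$. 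Writing $\frac1q=\frac{\theta}{p}+\frac{1-\theta}{p^*}$, we get $\|u\|_{L^q(\mu)}\le\|u\|_{L^p(\mu)}^\theta\|u\|_{L^{p^*}(\mu)}^{1-\theta}$, and each factor is bounded by $C\|u\|_\gamma$.

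For the endpoint, I would not aim directly at $\int (1+x_N)^{\gamma-p}|u|^{p^*}$. Instead I would first prove the stronger bound with weight $(1+x_N)^{\gamma p^*/p}$, namely $\|v\|_{L^{p^*}(\mathds{R}^N_+)}^p\le C\|u\|_\gamma^p$. Since $\gamma p^*/p\ge \gamma-p$ and $1+x_N\ge1$, this dominates the desired integral. The function $v$ has compact support in $\overline{\mathds{R}^N_+}$, so after even reflection across $\{x_N=0\}$ it lies in $W^{1,p}(\mathds{R}^N)$. The classical Sobolev inequality on $\mathds{R}^N$ then gives $\|v\|_{L^{p^*}(\mathds{R}^N_+)}\le C\|\nabla v\|_{L^p(\mathds{R}^N_+)}$. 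Next compute
\[
\nabla v=(1+x_N)^{\gamma/p}\nabla u+\tfrac{\gamma}{p}(1+x_N)^{\gamma/p-1}u\,e_N ,
\]
so that
\[
\int_{\mathds{R}^N_+}|\nabla v|^p\,\mathrm{d}x\le 2^{p-1}\Bigl(\|u\|_\gamma^p+(\tfrac{\gamma}{p})^p\int_{\mathds{R}^N_+}(1+x_N)^{\gamma-p}|u|^p\,\mathrm{d}x\Bigr)\le C\|u\|_\gamma^p .
\]
The last step uses \eqref{hardy Fraca}. This proves the case $q=p^*$, and in fact with an even better weight.

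For $p=N$ there is no $p^*$, so I would run the same substitution with a Gagliardo--Nirenberg inequality in place of Sobolev. On $\mathds{R}^N$ (after reflection), for $q\in[N,\infty)$,
\[
\|v\|_{L^q}\le C\|\nabla v\|_{L^N}^{1-N/q}\|v\|_{L^N}^{N/q}.
\]
The factor $\|\nabla v\|_{L^N}$ is controlled as above. However, $\|v\|_{L^N}^N=\int(1+x_N)^{\gamma}|u|^N$ is not controlled by Hardy, which only gives the weight $(1+x_N)^{\gamma-N}$.

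This step is the main obstacle, and I would handle it by localizing in $x_N$. Cover $\mathds{R}^N_+$ by the dyadic slabs $S_k=\{2^{k}-1\le x_N<2^{k+1}-1\}$ with $k\ge0$, on which $1+x_N\sim 2^k$. Apply the inequality $\|w\|_{L^q(S)}\le C(\|\nabla w\|_{L^N(S)}+\lambda^{-1}\|w\|_{L^N(S)})$ on a slab $S$ of thickness $\lambda$ (the scaled $W^{1,N}\hookrightarrow L^q$ embedding, uniform after scaling the $x_N$-direction) to $w=u$ on $S_k$ with $\lambda=2^k$. Multiplying by $2^{k(\gamma-N)/q}$ and summing over $k$ with $\ell^{N}\subset\ell^{q}$ should give $\int (1+x_N)^{\gamma-N}|u|^q\lesssim(\|u\|_\gamma^N+\int(1+x_N)^{\gamma-N}|u|^N)^{q/N}$. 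The scaling exponents still need to be checked; they only need to have the right sign because $\gamma-N\le\gamma$. Hardy then closes the estimate. The same slab argument also works for $p<N$ as a check on the first approach.
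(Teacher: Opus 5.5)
Your argument is correct, and it takes a genuinely different route from the paper in both regimes.

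\textbf{The case $1<p<N$.} The paper splits into two cases. If $p-1<\gamma\le p$, it drops the weight via $(1+x_N)^{\gamma-p}\le 1$, applies the unweighted Sobolev inequality, and uses $|\nabla u|^p\le (1+x_N)^{\gamma}|\nabla u|^p$. If $\gamma>p$, it substitutes $v=(1+x_N)^{-\sigma}u$ with $\sigma p^*=p-\gamma$, so that the $L^{p^*}$ side carries exactly the weight $(1+x_N)^{\gamma-p}$. It then checks $-\sigma p<\gamma$ and $(\sigma+1)p>p-\gamma$ in order to absorb the gradient terms by Hardy. Your choice $v=(1+x_N)^{\gamma/p}u$ is the extremal admissible exponent: it gives equality in both constraints. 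This removes the case split and yields the stronger bound with weight $(1+x_N)^{\gamma N/(N-p)}$.

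\textbf{The case $p=N$.} The paper applies the $L^1$ Sobolev inequality to $|u|^N(1+x_N)^{-\sigma}$ with $\sigma=N-\gamma-1$. It then bootstraps through $|u|^{N+1},|u|^{N+2},\dots$ using Young/H\"older, Hardy and interpolation. That uses only whole-space Sobolev inequalities, but needs an induction on the exponent. Your dyadic slab localization reaches every $q\in[N,\infty)$ at once, and, as you note, also covers $p<N$. The cost is a local embedding on cubes plus a scaling argument.

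\textbf{A correction in the slab step.} The slab inequality as you wrote it is false for large $\lambda$; test it on a bump of width comparable to $\lambda$. Tiling the slab by cubes of side $\lambda$, rescaling from the unit cube, and summing with $\ell^N\subset\ell^q$ gives instead
\[
\|w\|_{L^q(S)}\le C\lambda^{N/q}\bigl(\|\nabla w\|_{L^N(S)}+\lambda^{-1}\|w\|_{L^N(S)}\bigr).
\]
Put $a_k=\int_{S_k}(1+x_N)^{\gamma}|\nabla u|^N$ and $b_k=\int_{S_k}(1+x_N)^{\gamma-N}|u|^N$, and take $\lambda=2^k$. Then $\int_{S_k}|\nabla u|^N\le 2^{-k\gamma}a_k$ and $2^{-kN}\int_{S_k}|u|^N\le C2^{-k\gamma}b_k$, so both terms carry the same factor $2^{-k\gamma}$. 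Multiplying by $2^{k(\gamma-N)}$ gives
\[
\int_{S_k}(1+x_N)^{\gamma-N}|u|^q\le C\,2^{k\gamma(1-q/N)}(a_k+b_k)^{q/N}\le C(a_k+b_k)^{q/N},
\]
since $q\ge N$. The sum then closes as you intended.

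So the operative point is not ``$\gamma-N\le\gamma$''. It is that the factor $\lambda^{-1}$, with $\lambda\sim 1+x_N$, turns the zero-order term exactly into the Hardy weight. With a thickness-independent constant (e.g.\ unit cubes), you would be left with the factor $2^{k(N-\gamma)(q/N-1)}$. That factor is unbounded in $k$ when $N-1<\gamma<N$.
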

\begin{remark} We finally highlight that our results for \eqref{P} can be extended to a more general class of problems of the form
$$
			\left\lbrace
			\begin{aligned}
				-{\rm{div}}(\rho(x)|\nabla u|^{p-2}\nabla u ) +\lambda\frac{|u|^{p-2}u}{(1+x_N)^{p-\gamma}}&=a(x)|u|^{q-2}u-b(x)|u|^{s-2}u&\mbox{in }&\,\mathds{R}^N_+,
				\vspace{0.2cm}\\
				\rho(x)|\nabla u|^{p-2}\nabla u\cdot\nu+\mu |u|^{p-2}u&=0,&\mbox{on }&\,\mathds{R}^{N-1}.
			\end{aligned}
			\right.
$$
Considering the norm 
$$
		\|u\|^p_{\lambda, \mu} = \int_{\mathds{R}^{N}_+}\rho(x)|\nabla u|^p\, \mathrm{d} x+\lambda\int_{\mathds{R}^{N}_+}\frac{|u|^p}{(1+x_N)^{p-\gamma}}\, \mathrm{d} x+\mu \int_{\mathds{R}^{N-1}}|u|^p\, \mathrm{d} x',
  $$
and invoking Theorem \ref{Hardy}, we can see that, for certain conditions on the parameters $\lambda,\mu$  depending on $\rho_0$ and $C_{p,\gamma}$, the norms $\|\cdot\|_{\lambda,\mu}$ is equivalent to $\|\cdot\|$.
Therefore, the same approach can treat this more general class of problems. 
\end{remark}

\paragraph{Strategy of the proofs}
First, we obtain a Hardy-type inequality in the upper half-space for functions in $C_0^\infty(\mathds{R}^N)$, Theorem \ref{Hardy}.
Consequently, in combination with interpolation and interaction argument, we establish a weighted Sobolev-type embedding.
The proofs of the Liouville-type results, Theorems \ref{nonexistence1} and \ref{nonexistence2}, are derived using key estimates proved in Lemmas \ref{Estimativa1} and \ref{Estimativa 2} combined with a contradiction argument.
Theorems \ref{Primeiro Existencia}, \ref{Segundo Existencia}, and \ref{Terceiro Exitencia}, on the existence of nontrivial solutions for \eqref{P}, are proved by using a new weighted Sobolev embedding for the upper half-space and the fibering method. 

\bigskip

\paragraph{Structure of the paper}
In Section 3, we present the proofs of Theorems~\ref{Hardy} and \ref{Sobolev}. 
In Section 4, we establish our nonexistence results by proving Theorems \ref{nonexistence1} and \ref{nonexistence2}. 
Section 5 provides the proofs of Theorems \ref{Primeiro Existencia}, \ref{Segundo Existencia}, and \ref{Terceiro Exitencia}. 
Finally, in Section 6, we comment on potential further developments related to the main subject addressed in this paper.

\section{Proof of Hardy and Sobolev type inequalities}
Before proving our nonexistence and existence results for \eqref{P}, we first establish Theorems ~\ref{Hardy} and \ref{Sobolev}, which play a fundamental role in the sequel. We shall borrow some insights from the paper \cite{MR4189791} to achieve this.

\begin{proof}[Proof of Theorem~\ref{Hardy}]  Let $p>1$ and $\sigma$ be a real number to be chosen later. For any $u\in C_0^\infty(\mathds{R}^N)$, using integration by parts we obtain
\[
(\sigma+1)\int_{\mathds{R}^N_+}(1+x_N)^{\sigma}|u|^p \, \mathrm{d} x +\int_{\mathds{R}^{N-1}}|u|^p\, \mathrm{d} x'=-\int_{\mathds{R}^N_+}(1+x_N)^{\sigma+1}(|u|^p)_{x_N}\, \mathrm{d} x.
\]
On the other hand, 
$$
\begin{aligned}
\left|-\int_{\mathds{R}^N_+}(1+x_N)^{\sigma+1}(|u|^p)_{x_N}\, \mathrm{d} x\right|
			\leq &p\int_{\mathds{R}^N_+}(1+x_N)^{\sigma+1}|u|^{p-1}|\nabla u| \, \mathrm{d} x.
\end{aligned}
 $$
 For $a,b\geq0$, real numbers and $\varepsilon>0$, we can use the Young inequality to get
		$$
		pab\leq \varepsilon(\sigma+1)a^{\frac{p}{p-1}}+\left(\frac{p-1}{\varepsilon(\sigma+1)}\right)^{p-1}b^p.
		$$
Taking into account that 
 $$
 p(1+x_N)^{\sigma+1}|u|^{p-1}|\nabla u|=p(1+x_N)^{(\sigma+1-\frac{\gamma}{p})}|u|^{p-1}(1+x_N)^{\frac{\gamma}{p}}|\nabla u|,
 $$
we  obtain 
		\begin{align*}
			p\int_{\mathds{R}^N_+}(1+x_N)^{(\sigma+1}|u|^{p-1}|\nabla u|\, \mathrm{d} x\leq&\varepsilon(\sigma+1)\int_{\mathds{R}^N_+}(1+x_N)^{(\sigma+1-\gamma/p)\frac{p}{p-1}}|u|^p\, \mathrm{d} x\\
			+&\left(\frac{p-1}{\varepsilon(\sigma+1)}\right)^{p-1}\int_{\mathds{R}^{N_+}}(1+x_N)^\gamma|\nabla u|^p \, \mathrm{d} x.
		\end{align*}
		Combining the above inequalities, we get
		\begin{align*}
			(\sigma+1)\int_{\mathds{R}^N_+}(1+x_N)^{\sigma}|u|^p \, \mathrm{d} x+	\int_{\mathds{R}^{N-1}}|u|^{p}\, \mathrm{d} x'&\leq \varepsilon(\sigma+1)\int_{\mathds{R}^N_+}(1+x_N)^{(\sigma+1-\gamma/p)\frac{p}{p-1}}|u|^p\, \mathrm{d} x\\
			&+\left(\frac{p-1}{\varepsilon(\sigma+1)}\right)^{p-1}\int_{\mathds{R}^{N_+}}(1+x_N)^\gamma|\nabla u|^p \, \mathrm{d} x.
		\end{align*}
		Next, choosing $\sigma$ such that 
		$$
		\left(\sigma+1-\frac{\gamma}{p}\right)\frac{p}{p-1}=\sigma,
		$$
		we have  $\sigma=\gamma-p$ and $\sigma+1=\gamma-p+1>0$. Thus, one has
		$$
		\begin{aligned}	(\sigma+1)(1-\varepsilon)\int_{\mathds{R}^N_+}(1+x_N)^{\sigma}|u|^p\, \mathrm{d} x+&	\int_{\mathds{R}^{N-1}}|u|^{p}\, \mathrm{d} x'\\
			\leq& \left(\frac{p-1}{\varepsilon(\sigma+1)}\right)^{p-1}\int_{\mathds{R}^{N_+}}(1+x_N)^\gamma|\nabla u|^p \, \mathrm{d} x,
   \end{aligned}
		$$
which implies 
  $$
  \begin{aligned}
			(\sigma+1)^p(\varepsilon^{p-1}-\varepsilon^p)\int_{\mathds{R}^N_+}(1+x_N)^{\sigma}|u|^p\, \mathrm{d} x&+(\sigma+1)^{p-1}\varepsilon^{p-1}	\int_{\mathds{R}^{N-1}}|u|^{p}\, \mathrm{d} x'\\
			&\leq (p-1)^{p-1}\int_{\mathds{R}^{N_+}}(1+x_N)^\gamma|\nabla u|^p \, \mathrm{d} x.
\end{aligned}
$$
		Since the function $f(\varepsilon)=\varepsilon^{p-1}-\varepsilon^p$, for $\varepsilon>0$ has it maximum at 
		$$
		\varepsilon_0=\frac{p-1}{p}\quad\mbox{and}\quad f(\varepsilon_0)=(p-1)^{p-1}\frac{1}{p^p},
		$$
		a simple computation shows that  
		\begin{align*}
			\left(\frac{\sigma+1}{p}\right)^p\int_{\mathds{R}^N_+}(1+x_N)^{\sigma}|u|^p\, \mathrm{d} x+&\left(\frac{\sigma+1}{p}\right)^{p-1}	\int_{\mathds{R}^{N-1}}|u|^{p}\, \mathrm{d} x'\leq \int_{\mathds{R}^{N_+}}(1+x_N)^\gamma|\nabla u|^p\, \mathrm{d} x,
		\end{align*}
		and this concludes the proof.
	\end{proof}
	
	\begin{proof}[Proof of Theorem~\ref{Sobolev}] By density, we can assume that $u\in C^\infty_0(\mathds{R}^N)$ and we shall proceed with the proof in several steps.  First, we assume $1<p<N$ and $p-1<\gamma\leq p$. By the classical Gagliardo-Nirenberg-Sobolev inequality, we have
 \begin{equation}\label{GNS}
			\left(\int_{\mathds{R}^N_+}|v|^{p^*}\, \mathrm{d} x\right)^{(N-p)/N}\leq C\int_{\mathds{R}^N_+}|\nabla v|^p\, \mathrm{d} x,~\forall v \in C^{\infty}_0(\mathds{R}^N),
		\end{equation}
  which holds for every $1\leq p<N$. Thus, for $q\in[p,p^*]$, by interpolation inequality there exists $\alpha\in[0,1]$ such that 	
		$$
		\begin{aligned}
	\left(\int_{\mathds{R}^N_+}\frac{|u|^q}{(1+x_N)^{p-\gamma}}\, \mathrm{d} x\right)^{p/q} \leq&\left(\int_{\mathds{R}^N_+} \frac{|u|^p}{(1+x_N)^{p-\gamma}}\, \mathrm{d} x\right)^{\alpha}
\left(\int_{\mathds{R}^N_+} \frac{|u|^{p^*}}{(1+x_N)^{p-\gamma}}\, \mathrm{d} x\right)^{p(1-\alpha)/p^*}\\
			\leq& \left(\int_{\mathds{R}^N_+} \frac{|u|^p}{(1+x_N)^{p-\gamma}}\, \mathrm{d} x\right)^{\alpha}\left(\int_{\mathds{R}^N_+} |u|^{p^*}\, \mathrm{d} x\right)^{p(1-\alpha)/p^*}
   \end{aligned}
   $$
Then by Theorem~\ref{Hardy} and \eqref{GNS}
   $$
   \begin{aligned}
		\left(\int_{\mathds{R}^N_+}\frac{|u|^q}{(1+x_N)^{p-\gamma}}\, \mathrm{d} x\right)^{p/q} 	\leq& C\left(\int_{\mathds{R}^N_+} (1+x_N)^{\gamma}|\nabla u|^p\, \mathrm{d} x\right)^{\alpha}\left(\int_{\mathds{R}^N_+} |\nabla u|^p\, \mathrm{d} x\right)^{1-\alpha}\\
			\leq& C \int_{\mathds{R}^N_+} (1+x_N)^{\gamma}|\nabla u|^p\, \mathrm{d} x.
		\end{aligned}
		$$
		Next, assume $1<p<N$ and $\gamma>p$. Once again, by interpolation, it is sufficient to prove that 
		\begin{equation}\label{p^* inequality}
			\left(\int_{\mathds{R}^N_+}\frac{|v|^{p^*}}{(1+x_N)^{p-\gamma}}\, \mathrm{d} x\right)^{(N-p)/N}\leq C\int_{\mathds{R}^N_+}(1+x_N)^{\gamma}|\nabla v|^p\, \mathrm{d} x,~\forall v \in C^{\infty}_0(\mathds{R}^N).
		\end{equation}
Defining  $v=:u/(1+x_N)^\sigma$ with $u \in C^{\infty}_0(\mathds{R}^N)$ and using a simple computation we see that 
		\begin{align*}
			\nabla v=&\frac{1}{(1+x_N)^\sigma} \left(\nabla u-(0',\frac{\sigma u}{(1+x_N)})\right)
		\end{align*}
		and consequently, there exists a constant $C=C(p,\sigma)>0$ such that 
		$$
			|\nabla v|^p \leq C\left(\frac{|\nabla u|^p}{(1+x_N)^{p\sigma}}+\frac{|u|^p}{(1+x_N)^{(\sigma+1)p}}\right).
		$$
		This, together with \eqref{GNS}, implies that 
		\begin{equation}\label{GNS v}
			\left(\int_{\mathds{R}^N_+}\frac{|u|^{p^*}}{(1+x_N)^{\sigma p^*}}\, \mathrm{d} x\right)^{(N-p)/N}\leq C\int_{\mathds{R}^N_+}\left(\frac{|\nabla u|^p}{(1+x_N)^{p\sigma}}+\frac{|u|^p}{(1+x_N)^{(\sigma+1)p}}\right)\, \mathrm{d} x.
		\end{equation}
		Choosing $\sigma<0$ such that $\sigma p^*=p-\gamma$ we deduce that  
		\[
		-\sigma p=\frac{(\gamma-p)(N-p)}{N}<\gamma-p<\gamma
		\]
		and hence
		$
		(\sigma+1)p>p-\gamma.
		$
		Thus, from  \eqref{GNS v} and the Hardy inequality \eqref{hardy with xn} we  conclude that  \eqref{p^* inequality} holds.\\
	
 Now assume that $p=N$. For $u \in C^{\infty}_0(\mathds{R}^N)$, applying inequality \eqref{GNS} with $p=1$ and $v=\frac{|u|^N}{(1+x_N)^\sigma}$  we get
	\begin{align*}
\left(\int_{\mathds{R}^N_+}\frac{|u|^{N^2/(N-1)}}{(1+x_N)^{\frac{\sigma N}{N-1}}}\, \mathrm{d} x\right)^{(N-1)/N}\leq &C|\sigma|\int_{\mathds{R}^{N}_+}\frac{|u|^N}{(1+x_N)^{\sigma+1}}\, \mathrm{d} x+CN\int_{\mathds{R}^{N}_+}\frac{|u|^{N-1}|\nabla u|}{(1+x_N)^{\sigma}}\, \mathrm{d} x.
\end{align*}
Choosing $\sigma+1=N-\gamma$ and 
		using Young's inequality, we obtain
		\begin{align*}
			\int_{\mathds{R}^{N}_+}\frac{|u|^{N-1}|\nabla u|}{(1+x_N)^{\sigma}}\, \mathrm{d} x=&\int_{\mathds{R}^{N}_+}\frac{|u|^{N-1}(1+x_N)^{\gamma/N}|\nabla u|}{(1+x_N)^{\sigma+\gamma/N}}\, \mathrm{d} x\\
			\leq& C\int_{\mathds{R}^{N}_+}\frac{|u|^N}{(1+x_N)^{N-\gamma}}+(1+x_N)^{\gamma}|\nabla u|^N\, \mathrm{d} x,
		\end{align*}
		where we used that 
		\begin{equation}\label{sigma equation}
			\left(\sigma+\frac{\gamma}{N}\right)\frac{N}{N-1}= N-\gamma.
		\end{equation}
Since $
		\frac{\sigma N}{N-1}\leq N-\gamma,
		$
	using Theorem~\ref{Hardy} we get
		\begin{equation}\label{key 1}
  \left(\int_{\mathds{R}^N_+}\frac{|u|^{N^2/(N-1)}}{(1+x_N)^{N-\gamma}}\, \mathrm{d} x\right)^{(N-1)/N}\leq C \int_{\mathds{R}^{N}_+}(1+x_N)^\gamma|\nabla u|^N\, \mathrm{d} x.
		\end{equation}
		Thus, interpolation implies
		\begin{align*}\left(\int_{\mathds{R}^N_+}\frac{|u|^q}{(1+x_N)^{N-\gamma}}\, \mathrm{d} x\right)
			^{N/q}\leq& \left(\int_{\mathds{R}^{N}_+}
			\frac{|u|^N}{(1+x_N)^{N-\gamma}}\, \mathrm{d} x\right)^{\theta}\left(\int_{\mathds{R}^{N}_+}\frac{|u|^{N^2/(N-1)}}{(1+x_N)^{N-\gamma}}\, \mathrm{d} x\right)^{(1-\theta)(N-1)/{N}}\\
			\leq& C\int_{\mathds{R}^{N}_+}(1+x_N)^\gamma|\nabla u|^N\, \mathrm{d} x,
		\end{align*}
		for any $q \in [N,N^2/(N-1)]$. 	Since $N<N+1<N^2/(N-1)$, in particular we get
		\begin{equation}\label{q=N+1 2}
			\left(\int_{\mathds{R}^{N}_+}\frac{|u|^{N+1}}{(1+x_N)^{N-\gamma}}\, \mathrm{d} x\right)^{N/(N+1)}\leq C\int_{\mathds{R}^{N}_+}(1+x_N)^{\gamma}|\nabla u|^N\, \mathrm{d} x.
		\end{equation}
		Once again, applying ~\eqref{GNS} with $p=1$ and $v=\frac{|u|^{N+1}}{(1+x_N)^{\sigma}}$ and using Young's inequality we have
		\begin{align*}
			\left(\int_{\mathds{R}^{N}_+}\frac{|u|^{\frac{N(N+1)}{N-1}}}{(1+x_N)^{\frac{\sigma N}{N-1}}}\, \mathrm{d} x\right)^{(N-1)/N} \leq&C\left(\int_{\mathds{R}^{N}_+}\frac{|u|^{N+1}}{(1+x_N)^{\sigma+1}}\, \mathrm{d} x +\int_{\mathds{R}^{N}_+}\frac{|u|^N|\nabla u|}{(1+x_N)^{\sigma}}\, \mathrm{d} x\right).
		\end{align*}
		Using H\"{o}lder's inequality and~\eqref{sigma equation}, we obtain
		\begin{align*}
			\int_{\mathds{R}^{N}_+}\frac{|u|^{N}|\nabla u|}{(1+x_N)^{\sigma}}\, \mathrm{d} x=&\int_{\mathds{R}^{N}_+}\frac{|u|^{N}(1+x_N)^{\gamma/N}|\nabla u|}{(1+x_N)^{\sigma+\gamma/N}}\, \mathrm{d} x\\
			\leq&\left(\int_{\mathds{R}^{N}_+}\frac{|u|^{\frac{N^2}{N-1}}}{(1+x_N)^{N-\gamma}}\, \mathrm{d} x\right)^{(N-1)/N}\left(\int_{\mathds{R}^{N}_+}(1+x_N)^{\gamma}|\nabla u|^N\, \mathrm{d} x\right)^{1/N}.
		\end{align*}
		Thus, from \eqref{key 1} and \eqref{q=N+1 2} one has
		\begin{equation*}
			\left(\int_{\mathds{R}^{N}_+}\frac{|u|^{\frac{N(N+1)}{N-1}}}{(1+x_N)^{N-\gamma}}\, \mathrm{d} x\right)^\frac{N-1}{N+1}\leq C \int_{\mathds{R}^{N}_+}(1+x_N)^{\gamma}|\nabla u|^N\, \mathrm{d} x
		\end{equation*}
		and interpolation implies
		\begin{equation*}
			\left(\int_{\mathds{R}^{N}_+}\frac{|u|^q}{(1+x_N)^{N-\gamma}}\, \mathrm{d} x\right)^{N/q}\leq C \int_{\mathds{R}^{N}_+}(1+x_N)^{\gamma}|\nabla u|^N\, \mathrm{d} x,
		\end{equation*}
		for any $q \in[N,\frac{N(N+1)}{N-1}]$. Reiterating this argument with $k=N+2,N+3,\ldots,$ we get
		\[
		\left(\int_{\mathds{R}^{N}_+}\frac{|u|^{Nk/(N-1)}}{(1+x_N)^{N-\gamma}}\, \mathrm{d} x\right)^{(N-1)/k} \leq C\int_{\mathds{R}^{N}_+}(1+x_N)^\gamma|\nabla u|^N\, \mathrm{d} x.
		\]
		Now, given $q\in (N,\infty)$, we can choose $k\geq N$ such that $q \in(N,Nk/(N-1))$ and by interpolation, we can conclude the proof.
	\end{proof}
	\section{Proof of Liouville-type Results}
	%%%%%%%%%%%%%%%%%%%%%%%%%%%%%%%%%%%%%%%
In this section, we shall focus on proving our Liouville-type results. The following estimate is instrumental in our analysis.

\begin{lemma}[key estimate]\label{Estimativa1} Assume condition $(H_0)$ and $1<s<q <p$. 
If $b\in L^1(\mathds{R}^N_+)\cap \mathcal{K}_b$, then  $B$ is well defined in $E$. 
In addition, if $a/b\in L^{\infty}(\mathds{R}^N_+)$, then 
		\begin{equation}\label{Chave1}
			A(v)^{p-s}\leq \left(\left\|\frac{a}{b}\right\|_{\infty}^{(p-s)}	\left[\frac{b_0C_{p,\gamma}^{-p}}{\rho_0}\right]^{q-s}\right)B(v)^{p-q}\|v\|^{p(q-s)},
   \quad \forall v\in  E.
		\end{equation}
		In particular, $A$ is well-defined. Furthermore, if \eqref{condition for nonexistence} holds, then  
		\begin{equation}\label{Estima2}
			A(v)^{p-s}<\eta(s,q,p)B(v)^{p-q}\|v\|^{p(q-s)},    \quad \forall v\in E\backslash\{0\}.
		\end{equation}
\end{lemma}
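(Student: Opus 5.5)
The plan is to interpolate the $A$-integrand between the $B$-integrand (exponent $s$) and the Hardy integrand (exponent $p$), using H\"older's inequality with the weight $b$. Throughout, $v\in E$.

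\emph{Step 1: $B$ is well defined.} Since $s<p$, write $b|v|^s = b^{(p-s)/p}\cdot\big(b|v|^p\big)^{s/p}$ and apply H\"older with exponents $p/(p-s)$ and $p/s$:
\[
B(v)\le \|b\|_{1}^{(p-s)/p}\Big(\int_{\mathds{R}^N_+} b|v|^p\,\mathrm{d}x\Big)^{s/p}.
\]
Because $b\in\mathcal{K}_b$, we have $b\le b_0(1+x_N)^{\gamma-p}$. The Hardy inequality \eqref{hardy Fraca}, combined with $(H_0)$ and the embedding $E\hookrightarrow\mathcal{D}^{1,p}_\gamma$, then gives
\[
\int_{\mathds{R}^N_+} b|v|^p\,\mathrm{d}x\le b_0C_{p,\gamma}^{-p}\int_{\mathds{R}^N_+}(1+x_N)^\gamma|\nabla v|^p\,\mathrm{d}x\le \frac{b_0C_{p,\gamma}^{-p}}{\rho_0}\|v\|^p .
\]
Strictly, this holds first on $C^\infty_\delta(\mathds{R}^N_+)$ and then extends by density and Fatou's lemma.

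\emph{Step 2: interpolation.} Start from $a\le\|a/b\|_\infty b$, so that $A(v)\le \|a/b\|_\infty\int_{\mathds{R}^N_+} b|v|^q\,\mathrm{d}x$. Choose $\theta\in(0,1)$ with $q=\theta s+(1-\theta)p$, i.e.
\[
\theta=\frac{p-q}{p-s},\qquad 1-\theta=\frac{q-s}{p-s}.
\]
H\"older's inequality with respect to the measure $b\,\mathrm{d}x$ gives
\[
\int_{\mathds{R}^N_+} b|v|^q\,\mathrm{d}x\le B(v)^{\theta}\Big(\int_{\mathds{R}^N_+} b|v|^p\,\mathrm{d}x\Big)^{1-\theta}.
\]
Raise everything to the power $p-s$ and insert the bound from Step 1 on $\int b|v|^p$. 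This yields
\[
A(v)^{p-s}\le \|a/b\|_\infty^{p-s}\,B(v)^{p-q}\Big(\frac{b_0C_{p,\gamma}^{-p}}{\rho_0}\Big)^{q-s}\|v\|^{p(q-s)},
\]
which is exactly \eqref{Chave1}. Since the right-hand side is finite, $A$ is well defined.

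\emph{Step 3: strict inequality.} Let $v\ne0$. Then $\|v\|>0$, and $B(v)>0$ because $b>0$ and $v\ne0$ on a set of positive measure. Condition \eqref{condition for nonexistence} says the constant in \eqref{Chave1} is strictly less than $\eta(s,q,p)$. Multiplying this strict inequality by the positive quantity $B(v)^{p-q}\|v\|^{p(q-s)}$ gives \eqref{Estima2}.

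The only delicate point is the density/measurability issue in Step 1, namely passing the Hardy inequality from smooth functions to all of $E$. I expect this to be handled routinely by Fatou's lemma along an approximating sequence, after extracting a subsequence that converges almost everywhere.
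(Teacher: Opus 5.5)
Your proof is correct and is essentially the paper's argument. The paper interpolates with respect to $a\,\mathrm{d}x$ and then bounds $\int a|v|^s\le\|a/b\|_\infty B(v)$ and $\int a|v|^p\le b_0\|a/b\|_\infty C_{p,\gamma}^{-p}\rho_0^{-1}\|v\|^p$, whereas you first dominate $a\le\|a/b\|_\infty b$ and interpolate with respect to $b\,\mathrm{d}x$; this is only a reordering and gives the identical constant. (Your H\"older exponent $(p-s)/p$ on $\|b\|_1$ in Step 1 is the correct one; the paper's $(p-s)/s$ is a typo.)
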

 \begin{proof}
 Since $b\in\mathcal{K}_b$, we have $b(x)\leq b_0/(1+x_N)^{p-\gamma}$. 
 Thus, by H\"{o}lder's inequality, 
		\begin{align*}
			B(v)=\int_{\mathds{R}^{N}_+}b^{\frac{p-s}{p}}b^{s/p}|v|^s\, \mathrm{d} x
			\leq& \|b\|_{L^1(\mathds{R}^N_+)}^{(p-s)/s}\left(\int_{\mathds{R}^{N}_+}b|v|^p\, \mathrm{d} x\right)^{s/p}\\
			\leq& b_0^{s/p}\|b\|_{L^1(\mathds{R}^N_+)}^{(p-s)/s}\left(\int_{\mathds{R}^{N}_+}\frac{|v|^p}{(1+x_N)^{p-\gamma}}\, \mathrm{d} x\right)^{s/p},
		\end{align*}
which in finite by Theorem~\ref{Hardy} and assumption $(H_0)$.  Since  $a$ is nonnegative, we get 
$a(1+x_N)^{p-\gamma}\leq b_0a/b$, which implies 
 \begin{equation}\label{boa1}
 \|a(1+x_N)^{p-\gamma}\|_\infty\leq b_0\left\|\frac{a}{b}\right\|_\infty .
 \end{equation}
 If $s<q<p$ we can write $q=(1-\alpha)s+\alpha p$ with $\alpha=(q-s)/(p-s)\in (0,1)$ . 
 Thus, by H\"{o}lder's inequality,
		\[
A(v)=\int_{\mathds{R}^{N}_+}a|v|^q\, \mathrm{d} x=\int_{\mathds{R}^{N}_+} (a|v|^s)^{1-\alpha}(a|v|^p)^\alpha \, \mathrm{d} x\leq \left(\int_{\mathds{R}^{N}_+}a|v|^s\, \mathrm{d} x\right)^{1-\alpha}\left(\int_{\mathds{R}^{N}_+} a|v|^p\, \mathrm{d} x\right)^{\alpha}.
\]
Using that $1-\alpha=(p-q)/(p-s)$ we obtain
\begin{equation}\label{A ineq}
			A(v)^{p-s}\leq \left(\int_{\mathds{R}^{N}_+}a|v|^s\, \mathrm{d} x\right)^{p-q}\left(\int_{\mathds{R}^{N}_+} a|v|^p\, \mathrm{d} x\right)^{q-s}.
   \end{equation}
Now, observe that 
   $$
   \int_{\mathds{R}^{N}_+} a|v|^s\, \mathrm{d} x=\int_{\mathds{R}^{N}_+} \frac{a}{b}\left(b|v|^s\right)\, \mathrm{d} x\leq\left\|\frac{a}{b}\right\|_\infty B(v).
   $$
   Thus, \eqref{boa1} and Theorem~\ref{Hardy} gives
   $$
   \int_{\mathds{R}^{N}_+} a|v|^p\, \mathrm{d} x\leq\|a(1+x_N)^{p-\gamma}\|_\infty\int_{\mathds{R}^{N}_+}\frac{|v|^p}{(1+x_N)^{p-\gamma}}\, \mathrm{d} x\leq b_0\left\|\frac{a}{b}\right\|_\infty \frac{C^{-p}_{p,\gamma}}{\rho_0}\|v\|^p.
   $$
Therefore, plugging the last two inequalities into \eqref{A ineq}, we estimate \eqref{Chave1}.
\end{proof}

Throughout the paper, we will consider the following auxiliary function:
\begin{equation}\label{G definition}
			G(r,v)=A(v)r^{q-p}-B(v)r^{s-p},\quad r>0\quad\mbox{and}\quad v\in E.
		\end{equation}	
\begin{lemma}\label{Max G}  Assume the assumptions in Theorem \ref{nonexistence1}. 
For each fixed $v \in E\backslash\{0\}$ the function $G(.,v)$  has a unique critical point which is a maximum and is given by	
		\begin{equation}\label{critical point r(v)}
			\overline{r}(v)=\left(\frac{B(v)(p-s)}{A(v)(p-q)}\right)^{1/(q-s)}.
		\end{equation}
		Moreover, 
		\begin{equation}\label{G(r,v)}
			G(\overline{r}(v),v)=	\max_{r>0}G(r,v)=\left(\frac{A(v)^{p-s}}{\eta(s,q,p)B(v)^{p-q}}\right)^{1/(q-s)}>0,
   \end{equation}
   where $\eta(s,q,p)$ was defined in \eqref{GeneralConstant}.
	\end{lemma}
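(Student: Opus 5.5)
This is a one-variable calculus problem. Fix $v\in E\setminus\{0\}$. By Lemma~\ref{Estimativa1}, $A(v)$ and $B(v)$ are finite. Both are strictly positive, since $a,b>0$ and $v\not\equiv 0$ on a set of positive measure. The plan is to differentiate $G(\cdot,v)$ in $r$ and write
\[
\partial_r G(r,v)=r^{s-p-1}\bigl((q-p)A(v)r^{q-s}-(s-p)B(v)\bigr)
=r^{s-p-1}\bigl((p-s)B(v)-(p-q)A(v)r^{q-s}\bigr).
\]
Here $s<q<p$, so $p-s>0$, $p-q>0$ and $q-s>0$. The bracket is therefore strictly decreasing in $r$. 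It is positive near $0$ and tends to $-\infty$ as $r\to\infty$. Hence it vanishes exactly once, at $r^{q-s}=\frac{(p-s)B(v)}{(p-q)A(v)}$, which gives \eqref{critical point r(v)}. Because $\partial_r G$ changes sign from $+$ to $-$ there, this critical point is the unique global maximum on $(0,\infty)$.

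Next I compute the maximal value. Factor $G(\bar r,v)=\bar r^{\,s-p}\bigl(A\bar r^{\,q-s}-B\bigr)$ and substitute $A\bar r^{\,q-s}=\frac{p-s}{p-q}B$. This gives
\[
A\bar r^{\,q-s}-B=\frac{q-s}{p-q}B>0,
\]
so $G(\bar r,v)=\frac{q-s}{p-q}B\,\bar r^{\,s-p}$, which also shows the value is positive. Then insert $\bar r^{\,s-p}=\bigl(\frac{(p-s)B}{(p-q)A}\bigr)^{-(p-s)/(q-s)}$. Raising everything to the power $q-s$, the exponents combine as follows:
\begin{itemize}
\item $B$ gets exponent $(q-s)-(p-s)=-(p-q)$;
\item $A$ gets exponent $p-s$;
\item the numerical constant becomes $(q-s)^{q-s}(p-q)^{-(q-s)+(p-s)}(p-s)^{-(p-s)}=\frac{(q-s)^{q-s}(p-q)^{p-q}}{(p-s)^{p-s}}=\eta(s,q,p)^{-1}$.
\end{itemize}
This is exactly \eqref{G(r,v)} with $r=s$, $q=q$, $t=p$ in \eqref{GeneralConstant}. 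Taking the $(q-s)$-th root, which is legitimate since everything is positive, finishes the proof.

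There is no real obstacle here. The only points needing care are the exponent bookkeeping that produces $\eta(s,q,p)$, and checking that $A(v),B(v)\in(0,\infty)$ so that $\bar r(v)$ is well defined. The latter uses the finiteness from Lemma~\ref{Estimativa1} together with the positivity of the weights $a$ and $b$.
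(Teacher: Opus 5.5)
Your proof is correct and follows essentially the same route as the paper. Both differentiate $G(\cdot,v)$, locate the unique zero $\overline{r}(v)$, and use $A(v)\overline{r}(v)^{q-s}=\frac{p-s}{p-q}B(v)$ to get $G(\overline{r}(v),v)=\frac{q-s}{p-q}B(v)\overline{r}(v)^{s-p}$ and then the $\eta(s,q,p)$ formula. The only difference is cosmetic: you certify the global maximum by the sign change of $\partial_r G$, while the paper uses the limits $G\to-\infty$ as $r\to 0^+$ and $G\to 0$ as $r\to\infty$ together with $G(\overline{r}(v),v)>0$.
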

\begin{proof} Note that for each $v \in E\backslash\{0\},$ we have
 $$
\frac{\partial G}{\partial r}(r,v)=(q-p)A(v)r^{q-p-1}-(s-p)B(v)r^{s-p-1}.
$$
 Thus, 
 $$
\frac{\partial G}{\partial r}(r,v)=0 \iff r=\overline{r}(v)=\left(\frac{B(v)(p-s)}{A(v)(p-q)}\right)^{1/(q-s)}.
$$
Moreover, we can see that $\lim_{r \rightarrow +\infty} G(r,v)=0$ and $\lim_{r\rightarrow 0^+}G(r,v)=-\infty$. Since, 
\begin{align*}
			G(\overline{r}(v),v)
   &=A(v)\overline{r}(v)^{q-p}-B(v)\overline{r}(v)^{s-p}\\
						&= \left(\frac{q-s}{p-q}\right)B(v)\overline{r}(v)^{s-p}\\
			&=\left(\frac{q-s}{p-q}\right)B(v) \left(\frac{B(v)(p-s)}{A(v)(p-q)}\right)^{(s-p)/(q-s)}\\
			&=\left(\frac{A(v)^{p-s}}{\eta(s,q,p) B(v)^{p-q}}\right)^{1/(q-s)}>0
		\end{align*}
we can conclude that $G(.,v)$ has a unique global maximum at $r=\overline{r}(v)>0$. 	\end{proof}
	
	Now we are ready to present the proof of Theorem \ref{nonexistence1}.
	
	\begin{proof}[Proof of Theorem \ref{nonexistence1}]
 Assume by contradiction that  \eqref{P} has a nontrivial weak solution $u_0\in E$. Then, from the definition \eqref{weak solution}, Lemma~\ref{Estimativa1} and a density argument imply
		$$
			\|u_0\|^p=A(u_0)-B(u_0)=G(1,u_0).
		$$
On the other hand, by estimate \eqref{Estima2}, we have 
		\begin{equation*}
			A(u_0)^{p-s}<\eta(s,q,p)B(u_0)^{p-q}\|u_0\|^{p(q-s)},
		\end{equation*}
which combined with \eqref{G(r,v)} gives
	$
			G(\overline{r}(u_0),u_0)<\|u_0\|^p.
	$
Thus, we get 
	$$
 G(\overline{r}(u_0),u_0)<\|u_0\|^p=G(1,u_0),
 $$
 which contradicts the fact that $\overline{r}(u_0)$ is the maximum of $G(.,u_0)$  and this concludes  the proof of Theorem~\ref{nonexistence1}.
	\end{proof}
		
		Next, we shall focus on the proof of our second Liouville-type result.
 \begin{lemma}\label{Estimativa 2} Assume condition $(H_0)$, $p<q<s\leq p^*$ for $p<N$ and $p<q<s< \infty$ for $p=N$. If $b\in\mathcal{K}_b$, then $B$ is well defined. In addition, if $a/b\in L^{\infty}(\mathds{R}^N_+)$, then 
		\begin{equation}\label{est 2}
			A(v)^{s-p}\leq \left(\left\|\frac{a}{b}\right\|_{\infty}^{s-p}	\left[\frac{b_0C_{p,\gamma}^{-p}}{\rho_0}\right]^{s-q}\right)B(v)^{q-p}\|v\|^{p(s-q)}, \quad \forall v \in E.
		\end{equation}
		In particular, $A$ is well-defined. Furthermore, if \eqref{condition for nonexistence 2} holds then  
		\begin{equation}\label{Estima3}
			A(v)^{s-p}<\eta(p,q,s) B(v)^{q-p}\|v\|^{p(s-q)},  \quad \forall v \in E\backslash\{0\}.
		\end{equation}
	\end{lemma}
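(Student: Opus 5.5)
The plan mirrors the proof of Lemma~\ref{Estimativa1}, with the roles of the exponents rearranged so that now $q$ lies between $p$ and $s$.

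\textbf{Step 1: $B$ is well defined.} Since $b\in\mathcal{K}_b$, we have $b(x)\le b_0(1+x_N)^{\gamma-p}$. Hence
$$
B(v)\le b_0\int_{\mathds{R}^N_+}\frac{|v|^s}{(1+x_N)^{p-\gamma}}\,\mathrm{d}x .
$$
The right-hand side is finite by Theorem~\ref{Sobolev}, because $s\in[p,p^*]$ (or $s\in[p,\infty)$ when $p=N$). This is combined with the continuous embedding $E\hookrightarrow\mathcal{D}^{1,p}_\gamma(\mathds{R}^N_+)$ coming from $(H_0)$ and the Corollary. Unlike the sublinear case, no $L^1$ assumption on $b$ is needed here.

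\textbf{Step 2: interpolation.} Write $q=(1-\alpha)p+\alpha s$ with
$$
\alpha=\frac{q-p}{s-p}\in(0,1),\qquad 1-\alpha=\frac{s-q}{s-p}.
$$
Applying H\"older's inequality with exponents $1/(1-\alpha)$ and $1/\alpha$ to $a|v|^q=(a|v|^p)^{1-\alpha}(a|v|^s)^{\alpha}$ and raising to the power $s-p$ gives
$$
A(v)^{s-p}\le\Big(\int_{\mathds{R}^N_+}a|v|^p\,\mathrm{d}x\Big)^{s-q}\Big(\int_{\mathds{R}^N_+}a|v|^s\,\mathrm{d}x\Big)^{q-p}.
$$

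\textbf{Step 3: estimating the two factors.} For the second factor, $\int a|v|^s\le\|a/b\|_\infty B(v)$. For the first factor, use \eqref{boa1}, namely $a(1+x_N)^{p-\gamma}\le b_0\|a/b\|_\infty$, together with the Hardy inequality \eqref{hardy Fraca} and $(H_0)$:
$$
\int_{\mathds{R}^N_+}a|v|^p\,\mathrm{d}x\le b_0\Big\|\frac ab\Big\|_\infty\frac{C_{p,\gamma}^{-p}}{\rho_0}\|v\|^p .
$$
Substituting both bounds, the total power of $\|a/b\|_\infty$ is $(s-q)+(q-p)=s-p$, and the constant $b_0C_{p,\gamma}^{-p}/\rho_0$ appears to the power $s-q$. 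This is exactly \eqref{est 2}. Finiteness of $A$ then follows from Step~1.

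\textbf{Step 4: the strict inequality.} Under \eqref{condition for nonexistence 2}, the constant in \eqref{est 2} is strictly less than $\eta(p,q,s)$. For $v\neq0$ we have $\|v\|>0$ and $B(v)>0$, since $b>0$ and $v\not\equiv0$ on a set of positive measure. So the right-hand side of \eqref{est 2} is strictly positive, and replacing the constant by $\eta(p,q,s)$ gives the strict inequality \eqref{Estima3}.

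\textbf{Main obstacle.} The only delicate point is the justification in Steps 1 and 3 that the Hardy and Sobolev inequalities apply to every $v\in E$ rather than only to smooth functions. I would handle this by density of $C^\infty_\delta(\mathds{R}^N_+)$ in $E$ and Fatou's lemma, as in the Corollary. The case $p=N$ is covered identically, since Theorem~\ref{Sobolev} provides every $s<\infty$.
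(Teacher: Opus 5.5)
Your proposal is correct and follows essentially the same route as the paper: you bound $B$ via Theorem~\ref{Sobolev}, interpolate by H\"older with $\alpha=(q-p)/(s-p)$, and control $\int a|v|^p$ by \eqref{boa1} plus the Hardy inequality and $\int a|v|^s$ by $\|a/b\|_\infty B(v)$. Your Step~4 also makes explicit the positivity of $B(v)^{q-p}\|v\|^{p(s-q)}$ for $v\neq0$, which the paper needs for the strict inequality \eqref{Estima3} but leaves implicit.
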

\begin{proof} If $b\in \mathcal{K}_b$ and $v\in E$ we see that 
 $$
B(v)=\int_{\mathds{R}^{N}_+} b|v|^s\, \mathrm{d} x\leq b_0\int_{\mathds{R}^{N}_+}\frac{|v|^s}{(1+x_N)^{p-\gamma}}\, \mathrm{d} x,
$$
which is finite thanks to assumption $(H_0)$ and Theorem~\ref{Sobolev}. Using again that $b\in\mathcal{K}_b$ and $a$ is nonnegative we have
 $a(1+x_N)^{p-\gamma}\leq b_0a/b$, which implies 
 $$
 \|a(1+x_N)^{p-\gamma}\|_\infty\leq b_0\left\|\frac{a}{b}\right\|_\infty.
 $$
 Since $p<q<s$ we can write $q=(1-\alpha)p+\alpha s$ with $\alpha=(q-p)/(s-p)\in (0,1)$ . Thus, by H\"{o}lder's inequality we get 
		\[
A(v)=\int_{\mathds{R}^{N}_+}a|v|^q\, \mathrm{d} x=\int_{\mathds{R}^{N}_+} (a|v|^p)^{1-\alpha}(a|v|^s)^\alpha \, \mathrm{d} x\leq \left(\int_{\mathds{R}^{N}_+}a|v|^p\, \mathrm{d} x\right)^{1-\alpha}\left(\int_{\mathds{R}^{N}_+} a|v|^s\, \mathrm{d} x\right)^{\alpha}.
		\]
Taking into account that $1-\alpha=(s-q)/(s-p)$ we obtain
\begin{equation}\label{A ineqB}
			A(v)^{s-p}\leq \left(\int_{\mathds{R}^{N}_+}a|v|^p\, \mathrm{d} x\right)^{s-q}\left(\int_{\mathds{R}^{N}_+} a|v|^s\, \mathrm{d} x\right)^{q-p}.
   \end{equation}
   Now, thanks to Theorem~\ref{Hardy} and \eqref{boa1} we get 
   $$
   \int_{\mathds{R}^{N}_+} a|v|^p\, \mathrm{d} x\leq\|a(1+x_N)^{p-\gamma}\|_\infty\int_{\mathds{R}^{N}_+}\frac{|v|^p}{(1+x_N)^{p-\gamma}}\, \mathrm{d} x\leq b_0\left\|\frac{a}{b}\right\|_\infty \frac{C^{-p}_{p,\gamma}}{\rho_0}\|v\|^p
   $$
   and notice that 
   $$
   \int_{\mathds{R}^{N}_+} a|v|^s\, \mathrm{d} x=\int_{\mathds{R}^{N}_+} \frac{a}{b}\left(b|v|^s\right)\, \mathrm{d} x\leq\left\|\frac{a}{b}\right\|_\infty B(v).
   $$
Therefore, plugging the last two inequalities into \eqref{A ineqB}, we obtain estimate \eqref{est 2}.
\end{proof}

Arguing along the same lines as in the proof of Lemma~\ref{Max G}, we can obtain the following result:

\begin{lemma}  Assume condition $(H_0)$, $p<q<s\leq p^*$ for $p<N$ and $p<q<s< \infty$ for $p=N$. If $b\in\mathcal{K}_b$ and $a/b \in L^\infty(\mathds{R}^N_+)$, then for each $v \in E\backslash\{0\}$  the function $G(.,v)$ defined by \eqref{G definition} has a unique critical point at 	
		\begin{equation}\label{critical point r(v) 2}
			\overline{r}(v)=\left(\frac{A(v)(q-p)}{B(v)(s-p)}\right)^{1/(s-q)}.
		\end{equation}
		Moreover, 
		\begin{equation}\label{G(r,v) 2}
			G(\overline{r}(v),v)=	\max_{r>0}G(r,v)=\left(\frac{A(v)^{s-p}}{\eta(p,q,s) B(v)^{q-p}}\right)^{1/(s-q)}>0.
		\end{equation}
		\end{lemma}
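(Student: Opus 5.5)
The argument is a one-variable calculus computation, parallel to Lemma~\ref{Max G}, with the exponents reordered. Fix $v\in E\backslash\{0\}$. By Lemma~\ref{Estimativa 2}, $A(v)$ and $B(v)$ are finite. Since $a,b>0$ and $v\not\equiv 0$ on a set of positive measure, both are strictly positive. Hence $G(r,v)=A(v)r^{q-p}-B(v)r^{s-p}$ is a smooth function of $r\in(0,\infty)$ with $0<q-p<s-p$.

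\textbf{Critical point.} Differentiating gives
\[
\partial_r G(r,v)=r^{q-p-1}\bigl((q-p)A(v)-(s-p)B(v)r^{s-q}\bigr).
\]
Since $s-q>0$, the bracket is strictly decreasing in $r$. It is positive near $0$ and tends to $-\infty$. So it vanishes exactly at
\[
\overline r(v)=\left(\frac{A(v)(q-p)}{B(v)(s-p)}\right)^{1/(s-q)},
\]
and $G(\cdot,v)$ is increasing on $(0,\overline r(v))$ and decreasing afterwards. This shows that $\overline r(v)$ is the unique critical point and the global maximum. The limits are also consistent with this: $G(r,v)\to 0$ as $r\to0^+$, because $q>p$, and $G(r,v)\to-\infty$ as $r\to\infty$. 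These limits are the reverse of the sublinear case in Lemma~\ref{Max G}, but the monotonicity argument covers both cases.

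\textbf{Value at the maximum.} The critical point relation is $B(v)\overline r^{\,s-p}=\frac{q-p}{s-p}A(v)\overline r^{\,q-p}$. Substituting it gives
\[
G(\overline r,v)=\frac{s-q}{s-p}A(v)\overline r^{\,q-p}>0.
\]
Inserting the formula for $\overline r$ yields
\[
G(\overline r,v)=\frac{s-q}{s-p}\left(\frac{q-p}{s-p}\right)^{\frac{q-p}{s-q}}A(v)^{\frac{s-p}{s-q}}B(v)^{-\frac{q-p}{s-q}}.
\]
Raising to the power $s-q$, the constant becomes
\[
\frac{(s-q)^{s-q}(q-p)^{q-p}}{(s-p)^{s-p}}=\eta(p,q,s)^{-1}
\]
by \eqref{GeneralConstant} with $(r,q,t)=(p,q,s)$. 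This gives \eqref{G(r,v) 2}.

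The only step needing care is the exponent bookkeeping in this last identity: checking that the powers of $(s-p)$ combine to $-(s-p)$ after multiplying through by $s-q$. Everything else is elementary.
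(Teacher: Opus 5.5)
Your proposal is correct and follows essentially the same route as the paper, which proves this lemma only by pointing back to the proof of Lemma~\ref{Max G}: differentiate $G(\cdot,v)$, solve for the unique zero $\overline{r}(v)$, and substitute the critical-point relation to evaluate the maximum as a power of $\eta(p,q,s)^{-1}$. Your factorization of $\partial_r G$ is a slight improvement in rigor, since it shows the sign change of the derivative directly rather than inferring the global maximum from the endpoint limits.
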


\begin{proof}[Proof of Theorem \ref{nonexistence2}:] 
Arguing by contradiction, suppose that \eqref{P} has a nontrivial weak solution $u_0\in E$. From the definition \eqref{weak solution}, Lemma \ref{Estimativa 2} and a density argument we have
\begin{equation}\label{Existence2}
			\|u_0\|^p=A(u_0)-B(u_0)=G(1,u_0).
		\end{equation}
On the other hand, by estimate \eqref{Estima3}, we have 
		\begin{equation*}
			A(u_0)^{s-p}<\eta(p,q,s)B(u_0)^{q-p}\|u_0\|^{p(s-q)}.
		\end{equation*}
		This, together with \eqref{G(r,v) 2} implies that 
	$
G(\overline{r}(u_0),u_0)<\|u_0\|^p.
	$
Therefore, we obtain 
$$
G(\overline{r}(u_0),u_0)<\|u_0\|^p=G(1,u_0),
$$
contradicting the fact that $\overline{r}(u_0)$ is the maximum of $G(.,u_0)$ and this concludes the proof.
\end{proof}

	\section{Existence Results}
	This section is devoted to proving Theorems \ref{Primeiro Existencia}, \ref{Segundo Existencia} and \ref{Terceiro Exitencia}. To this purpose, we shall first prove a compactness result.

\begin{lemma}\label{compactness} Assume condition $(H_0)$ and $1<p\leq N$. 
\begin{enumerate}
    \item If $k\in L^1(\mathds{R}^N_+)\cap \mathcal{K}_b$, then the embedding 
    \begin{equation}\label{Mae}
		E\hookrightarrow L^q\left(\mathds{R}^N_+,k(x)\right) 
	\end{equation}
 is compact for all $1<q<p\leq N$.
 \item If $k\in \mathcal{K}_0$ and  $p<N$, then the embedding \eqref{Mae}	is continuous for  $q \in [p,p^*]$ and compact for $q \in [p,p^*)$. If $p=N$, the embedding is  compact for all $q\in[p,\infty)$.
\end{enumerate}
\end{lemma}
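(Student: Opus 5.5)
We prove both parts by splitting $\mathds{R}^N_+$ into a bounded region $B_R^+:=B_R\cap\mathds{R}^N_+$ and its exterior, and by using Theorem~\ref{Hardy} and Theorem~\ref{Sobolev} through the embedding $E\hookrightarrow\mathcal{D}^{1,p}_\gamma(\mathds{R}^N_+)$ from the Corollary. Let $(u_n)$ be bounded in $E$. By $(H_0)$, $\rho\ge\rho_0$, so on each $B_R^+$ the sequence is bounded in $W^{1,p}(B_R^+)$. Indeed, $\int_{B_R^+}|u_n|^p$ is controlled by \eqref{hardy Fraca}, since $(1+x_N)^{\gamma-p}$ is bounded below on $B_R^+$. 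Rellich--Kondrachov on the Lipschitz domain $B_R^+$, combined with a diagonal argument, gives a subsequence with $u_n\to u$ in $L^r(B_R^+)$ for every $R$ and every $r<p^*$ (every $r<\infty$ if $p=N$), and a.e. By weak lower semicontinuity, $u\in E$ with $\|u\|\le\liminf\|u_n\|$. Since $k$ is continuous, it is bounded on $\overline{B_R^+}$, so $\int_{B_R^+}k|u_n-u|^q\to0$. The whole proof therefore reduces to making the tail integral $\int_{\mathds{R}^N_+\setminus B_R}k|u_n-u|^q$ uniformly small.

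For part (2), continuity for $q\in[p,p^*]$ follows directly. Since $k\in\mathcal{K}_0$, the function $k(1+x_N)^{p-\gamma}$ is continuous and tends to $0$ at infinity, hence is bounded by some $M$, and Theorem~\ref{Sobolev} gives $\int k|u|^q\le M\int |u|^q(1+x_N)^{\gamma-p}\le MC_0^{q/p}\|u\|_\gamma^{q}\le C\|u\|^q$. For compactness with $q<p^*$, or any finite $q$ when $p=N$, set $\varepsilon_R:=\sup_{|x|\ge R}k(x)(1+x_N)^{p-\gamma}\to0$. The same estimate restricted to the exterior gives
\[
\int_{\mathds{R}^N_+\setminus B_R}k|u_n-u|^q\le \varepsilon_R\, C_0^{q/p}\rho_0^{-q/p}\sup_n\|u_n-u\|^q,
\]
which is uniformly small. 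Strictly speaking, compactness in the case $q=p^*$ is not claimed, and we only need $q$ finite in the subcritical range so that the local Rellich step applies.

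For part (1), with $1<q<p$ and $k\in L^1\cap\mathcal{K}_b$, we follow the Hölder argument of Lemma~\ref{Estimativa1}, using the exponents $p/q$ and $p/(p-q)$ and writing $k=k^{(p-q)/p}k^{q/p}$:
\[
\int_{\mathds{R}^N_+\setminus B_R}k|w|^q\le\Big(\int_{\mathds{R}^N_+\setminus B_R}k\Big)^{\frac{p-q}{p}}\Big(\int_{\mathds{R}^N_+}k|w|^p\Big)^{\frac{q}{p}}\le\Big(\int_{|x|\ge R}k\Big)^{\frac{p-q}{p}}\big(b_0C_{p,\gamma}^{-p}\rho_0^{-1}\big)^{\frac qp}\|w\|^q,
\]
where the last step uses $k\le b_0(1+x_N)^{\gamma-p}$ and Theorem~\ref{Hardy}. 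Since $k\in L^1$, the factor $\int_{|x|\ge R}k$ tends to $0$, and applying this with $w=u_n-u$ gives the uniform tail smallness. The same inequality taken over all of $\mathds{R}^N_+$ also gives continuity. Here, too, the local step needs only $q<p\le p^*$.

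The main obstacle is the local compactness step. The space $E$ is defined by a closure with respect to the gradient seminorm only, so we must justify that elements of $E$ are genuine $W^{1,p}_{\mathrm{loc}}(\overline{\mathds{R}^N_+})$ functions and that bounded sequences are bounded in $W^{1,p}(B_R^+)$ up to the flat boundary. This is exactly where the weak Hardy inequality \eqref{hardy Fraca} supplies the missing $L^p$ control. Once that is established, the diagonal Rellich argument combined with the two tail estimates above completes the proof.
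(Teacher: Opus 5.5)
Your proposal is correct and follows the paper's route. You split into $B_R^+$ and its exterior and apply local Rellich--Kondrachov on $B_R^+$. You control the tail by the H\"older--Hardy estimate $\int k|w|^q\le \|k\|_{L^1(\text{tail})}^{(p-q)/p}\bigl(\int k|w|^p\bigr)^{q/p}$ in part (1), and by $\sup_{|x|\ge R}k(1+x_N)^{p-\gamma}\to 0$ together with Theorem~\ref{Sobolev} in part (2). Your version is more careful than the paper's in three respects: you work with $u_n-u$ instead of just assuming $u_n\rightharpoonup 0$; you justify the local $W^{1,p}(B_R^+)$ bound via \eqref{hardy Fraca}; and you do not claim local compactness at $q=p^*$. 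The paper asserts that compactness, which is false, but it is not needed for the lemma.
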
 

\begin{proof} 
If $k \in L^1(\mathds{R}^N_+)$, by H\"{o}lder's inequality,
\begin{align*}
\int_{\mathds{R}^N_+}k|u|^q\, \mathrm{d} x=\int_{\mathds{R}^N_+}k^{\frac{p-q}{p}}k^{q/p}|u|^q\, \mathrm{d} x
    \leq\|k\|_1^{\frac{p-q}{p}}\left(\int_{\mathds{R}^N_+} k|u|^p\, \mathrm{d} x\right)^{q/p}.
\end{align*}
Also, since $k\in\mathcal{K}_b$ we have $k(x)\leq k_0(1+x_N)^{\gamma-p}$ and  by assumption $(H_0)$ and  Theorem \ref{Hardy} we obtain
\begin{align*}
\int_{\mathds{R}^N_+}k|u|^q \, \mathrm{d} x\leq C\|k\|_1^{\frac{p-q}{p}}\left(\int_{\mathds{R}^N_+}\frac{|u|^p}{(1+x_N)^{p-\gamma}} \, \mathrm{d} x \right)^{q/p}
\leq C\|k\|_1^{\frac{p-q}{p}}\|u\|^q.
\end{align*}
Now, if $(u_n) \subset E$ is a bounded sequence, up to a subsequence, we can assume that $u_n \rightharpoonup 0$ in $E$. Given $\varepsilon>0$ there exists $R=R(\varepsilon)>0$ such that $\|k\|_{L^1(B^c_R(0)\cap\mathds{R}^N_+)}\leq\varepsilon$ and hence 
\begin{align*}
\int_{B_R^c\cap \mathds{R}^N_+}k|u_n|^q\, \mathrm{d} x\leq  C\varepsilon^{\frac{p-q}{p}}\|u_n\|^q\leq C_1\varepsilon^{\frac{p-q}{p}}.
\end{align*}
To complete the proof for the first case, it is enough to use the classical Sobolev compact embedding to obtain the compact embedding $E\hookrightarrow W^{1,p}\hookrightarrow L^q(B_R^+)$.

Assuming  $k\in \mathcal{K}_0$, we have that $k(x)\leq C_0(1+x_N)^{\gamma-p}$ for some constant $C_0>0$, which implies that the embedding is continuous by Theorem \ref{Sobolev} and the assumption $(H_0)$ if $1<p<N$ and $q\in [p,p^*]$ or $q\in [p,\infty)$ if $p=N$. 
For $R>0$, we can write 
\begin{align*}
		\int_{\mathds{R}^N_+}k(x)|u|^q\, \mathrm{d} x=\int_{B_R^+}k(x)|u|^q\, \mathrm{d} x+\int_{(B_R^+)^c\cap \mathds{R}^N_+}k(x)|u|^q\, \mathrm{d} x.
	\end{align*}
If $(u_n) \subset E$ is a bounded sequence, up to a subsequence,  $u_n \rightharpoonup 0$ in $E$. Since the embedding $E\hookrightarrow W^{1,p}(B_R^+)\hookrightarrow L^q(B_R^+)$ is compact for all  $q\in [p,p^*]$ if  $1<p<N$ or $q\in [p,\infty)$ if $p=N$, it holds 
	\begin{equation}\label{abacate}
		\int_{B_R^+}k(x)|u_n|^q\, \mathrm{d} x \leq C\int_{B_R^+}|u_n|^q\, \mathrm{d} x \rightarrow 0.
	\end{equation}
Given $\varepsilon>0$, since $k\in\mathcal{K}_0 $ we can choose $R=R(\varepsilon)>0$ large enough such that $k(x)(1+x_N)^{p-\gamma}<\varepsilon$ for any $x\in B_R^c\cap\mathds{R}^N_+$, which implies
	\begin{align}\label{banana}
		\int_{(B_R^+)^c\cap \mathds{R}^N_+}k(x)|u_n|^q\, \mathrm{d} x <&  \varepsilon   \int_{(B_R^+)^c\cap \mathds{R}^N_+}\frac{|u_n|^q}{(1+x_N)^{p-\gamma}}\, \mathrm{d} x\leq C\varepsilon\|u_n\|^q.
	\end{align}
The proof of the second case follows from \eqref{abacate}-\eqref{banana}.\end{proof}

To establish our existence results, let us consider  the functional $I: E\rightarrow \mathds{R}$ associated with  \eqref{P}, defined as follows:
$$
I(u)=\frac{1}{p}\|u\|^p-\frac{1}{q}A(u)+\frac{1}{s}B(u),
$$
where $A$ and $B$ are defined in \eqref{FuntionalAB}. 
Straightforward computation shows that $I\in C^1(E,\mathds{R})$ and critical points of $I$ are weak solutions of \eqref{P}, see \cite{zbMATH05114882}. 
To prove that $I$ has a critical point, we shall use the fibering method \cite{zbMATH01060125,zbMATH05635193}. To this end, we proceed with some basic results.

\begin{lemma}\label{existence of a function r C1}Let $1<p\leq N$ and $q<\min\{s,p\}$ or $q>\max\{s,p\}$. Then, for each $v \in E\backslash\{0\}$ there exits a unique real number $r(v)>0$ such that the pair $(r(v),v)$ satisfies the equation 
  \begin{equation}\label{critical condition}
		\|v\|^p=r(v)^{q-p}A(v)-r(v)^{s-p}B(v)=G(r(v),v).
	\end{equation}
 Furthermore, the map $r:E\backslash\{0\}\rightarrow \mathds{R}$ belongs to $C^1(E\backslash\{0\},\mathds{R})$ and $\mu r(\mu v)=r(v)$ for all $\mu >0$ and $v \in E\backslash\{0\}$.
\end{lemma}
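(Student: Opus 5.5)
Throughout, $A$ and $B$ are assumed finite and of class $C^1$ on $E$. Under the hypotheses of Theorems~\ref{Primeiro Existencia}--\ref{Terceiro Exitencia} this follows from Lemma~\ref{compactness}: the embeddings into $L^q(\mathds{R}^N_+,a)$ and $L^s(\mathds{R}^N_+,b)$ are continuous, and the Nemytskii maps $u\mapsto|u|^q$, $u\mapsto|u|^s$ are $C^1$ there. Since $a,b>0$, we also have $A(v)>0$ and $B(v)>0$ for $v\neq0$. The plan is to fix $v\in E\backslash\{0\}$ and study the one-variable function $G(\cdot,v)$ from \eqref{G definition}. The idea is to show that $G(\cdot,v)$ is strictly monotone, with a derivative of fixed sign, on the set where it is positive, and then use the intermediate value theorem. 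Afterwards I would apply the implicit function theorem for regularity and use homogeneity for the scaling identity.

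\emph{Step 1 (the set where $G>0$).} Write
\[
G(r,v)=r^{s-p}\bigl(A(v)r^{q-s}-B(v)\bigr).
\]
Then $G(r,v)>0$ if and only if $A(v)r^{q-s}>B(v)$. This set is an interval in $r$:
\begin{itemize}
\item if $q<s$, it is $(0,r_0)$;
\item if $q>s$, it is $(r_0,\infty)$;
\end{itemize}
where $r_0=(B/A)^{1/(q-s)}$.

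\emph{Step 2 (sign of the derivative there).} The key computation is
\[
r^{p+1-s}\,\partial_rG(r,v)=(q-p)A(v)r^{q-s}-(s-p)B(v).
\]
\begin{itemize}
\item If $q<\min\{s,p\}$, then $q-p<0$. On $\{G>0\}$ we have $A r^{q-s}>B$, so $r^{p+1-s}\partial_rG<(q-p)B-(s-p)B=(q-s)B<0$.
\item If $q>\max\{s,p\}$, then $q-p>0$, and the same substitution gives $r^{p+1-s}\partial_rG>(q-s)B>0$.
\end{itemize}
So $G(\cdot,v)$ is strictly monotone on $\{G>0\}$, and $\partial_rG\neq0$ at every point where $G>0$.

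\emph{Step 3 (limits and existence).} Check the limits at the endpoints of the interval from Step 1.
\begin{itemize}
\item For $q<\min\{s,p\}$: as $r\to0^+$ the term $A r^{q-p}$ dominates, since $q-p<0$ and $q-p<s-p$, so $G\to+\infty$. At $r_0$ we have $G=0$.
\item For $q>\max\{s,p\}$: at $r_0$ we have $G=0$, and as $r\to\infty$ the term $A r^{q-p}$ dominates, so $G\to+\infty$.
\end{itemize}
Since $\|v\|^p>0$, the intermediate value theorem together with Step 2 gives exactly one $r(v)$ in $\{G>0\}$ solving \eqref{critical condition}. There are no solutions outside this set, because $G\le0$ there.

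\emph{Step 4 (regularity).} Define
\[
F(r,v)=r^{q-p}A(v)-r^{s-p}B(v)-\|v\|^p
\]
on $(0,\infty)\times(E\backslash\{0\})$. It is $C^1$, because $\|\cdot\|^p$ is $C^1$ on $E$ and $A,B$ are $C^1$. By Step 2, $\partial_rF(r(v),v)\neq0$. The implicit function theorem then gives a local $C^1$ solution branch near each $v$. Uniqueness from Step 3 identifies this branch with $r$, so $r\in C^1(E\backslash\{0\},\mathds{R})$.

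\emph{Step 5 (scaling).} We have $A(\mu v)=\mu^qA(v)$, $B(\mu v)=\mu^sB(v)$ and $\|\mu v\|^p=\mu^p\|v\|^p$. Substituting $r=r(v)/\mu$ into \eqref{critical condition} for $\mu v$ gives
\[
\mu^{p}\bigl(r(v)^{q-p}A(v)-r(v)^{s-p}B(v)\bigr)=\mu^p\|v\|^p,
\]
which holds by the definition of $r(v)$. By uniqueness, $r(\mu v)=r(v)/\mu$, i.e.\ $\mu r(\mu v)=r(v)$.

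The main obstacle is Step 2. In the cases $q<s<p$ and $s<p<q$, the function $G(\cdot,v)$ is not globally monotone, so a naive monotonicity argument fails. The substitution $A r^{q-s}>B$, valid only where $G>0$, is what restores strict monotonicity and the nondegeneracy of $\partial_rF$ needed for the implicit function theorem.
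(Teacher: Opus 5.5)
Your proof is correct, but it is organized differently from the paper's. The paper never works with $G(\cdot,v)$ directly. Instead it multiplies the equation by $r^{p-q}$ and studies $f(r,v)=r^{p-q}\bigl(\|v\|^p-G(r,v)\bigr)=\|v\|^p r^{p-q}+B(v)r^{s-q}-A(v)$. Under either hypothesis the exponents $p-q$ and $s-q$ have the same sign, so $f(\cdot,v)$ is globally strictly monotone on $(0,\infty)$. It increases from $-A(v)$ to $+\infty$ when $q<\min\{s,p\}$, and decreases from $+\infty$ to $-A(v)$ when $q>\max\{s,p\}$. Existence, uniqueness, and $\partial_r f\neq 0$ on all of $(0,\infty)\times(E\backslash\{0\})$ then follow at once. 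There is no need to locate the positivity set of $G$ or the point $r_0=(B/A)^{1/(q-s)}$. The scaling step is the same homogeneity-plus-uniqueness argument you give.

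So the ``main obstacle'' you identify, namely that $G(\cdot,v)$ is not monotone when $q<s<p$ or $s<p<q$, comes from the normalization you chose; the paper's rescaling removes it. What your localization buys is finer information. It shows that $r(v)<r_0$ (respectively $r(v)>r_0$), and it fixes the sign of $\partial_rG(r(v),v)$: negative in the sublinear case, positive in the superlinear one. This is the same kind of branch argument the paper is forced to use later, in Lemma~\ref{Propriedades Omega}, for $p<q<s$. There $p-q$ and $s-q$ have opposite signs, so the rescaling trick no longer gives global monotonicity. Your explicit remark that $A$ and $B$ must be $C^1$ on $E$ for the implicit function theorem is also a point the paper leaves implicit.
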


\begin{proof} {\it Existence:}
Consider the function $f:(0,\infty)\times E\rightarrow \mathds{R}$ defined by 
	\begin{equation*}
		f(r,v)=\|v\|^pr^{p-q}+B(v)r^{s-q}-A(v),
	\end{equation*} 	
and note that $f(r,v)=0$ if and only if \eqref{critical condition} holds. \\
If $v \in E\backslash\{0\}$ and $q>\max\{s,p\}$ we have
 \[
	\lim_{r\rightarrow 0^+}f(r,v)=\infty\quad \text{and}\quad \lim_{r \rightarrow +\infty}f(r,v)=-A(v)<0 .
\] 
In the case  $q<\min\{s,p\},$ it holds
 \[
\lim_{r \rightarrow 0^+}f(r,v)=-A(v)<0\quad \text{and}\quad 	\lim_{r\rightarrow \infty}f(r,v)=\infty.
	\]
Thus, in any case, by the Intermediate Value Theorem, $r(v)>0$ exists such that $f(r(v),v)=0$.

\noindent {\it Uniqueness:} 
Fixed $v \in E\backslash\{0\}$,  suppose that there are $r_1,r_2>0$ satisfying  \eqref{critical condition}. Consequently,  
\begin{equation*}
		\|v\|^pr_1^{p-q}+B(v)r_1^{s-q}=A(v)=\|v\|^pr_2^{p-q}+B(v)r_2^{s-q},
	\end{equation*}
which is equivalent to
	\begin{equation*}
		\|v\|^p(r_1^{p-q}-r_2^{p-q})+B(v)(r_1^{s-q}-r_2^{s-q})=0.
	\end{equation*}
Therefore, $r_1=r_2$ and so the map  $r:E\backslash\{0\}\rightarrow \mathds{R}$ satisfying  \eqref{critical condition} is well defined. 

\noindent{\it Regularity:} To prove that $r$ belongs to class $C^1$, we observe that  
	\begin{equation*}
		\frac{\partial f}{\partial r}(r,v)=(p-q)r^{p-q-1}\|v\|^p+(s-q)B(v)r^{s-q-1}\neq 0, \quad \text{in} \quad (0,\infty)\times E\backslash\{0\}.
	\end{equation*}
Using the implicit function theorem, 
 we obtain open sets $J \subset \mathds{R}$  and $V\subset E\backslash\{0\}$ containing $r(v)$ and $v$ respectively, and a $C^1$-function $\tau:V\rightarrow J$ satisfying 
	\[
	\tau(v)=r(v)\quad \text{and}\quad f(\tau(w),w)=0, \quad\forall w \in V.
	\]
By the uniqueness $r\equiv\tau$ in $V$ and therefore $r$ is a $C^1$-function in $V$. 
	
Finally, given $\mu>0$ and $v \in E\backslash\{0\}$ we have that $f(r(\mu v),\mu v)=0$, that is,
	\begin{equation}\label{key 3}
		A(v)=\mu^{p-q}r(\mu v)^{p-q}\|v\|^p+\mu^{s-q}r(\mu v)^{q-s}B(v).
	\end{equation}
Since $f(r(v),v)=0$, we have
$$
		A(v)=r(v)^{p-q}\|v\|^p+r(v)^{q-s}B(v),
  $$
which combined with \eqref{key 3} implies
	\[
	0=(\mu^{p-q}r(\mu v)^{p-q}-r(v)^{p-q})\|v\|^p+(\mu^{s-q}r(\mu v)^{s-q}-r(v)^{s-q})B(v).
	\]
Thus, $(\mu r(\mu v))^{p-q}=r(v)^{p-q}$ and this concludes the proof.  	
\end{proof}

\begin{remark}\label{key remark}
    Suppose that there exists an open $\Omega\subset E\backslash\{0\}$ and $r \in C^1(\Omega, \mathds{R})$ such that $(r(v),v)$ satisfies \eqref{critical condition} for each $v \in \Omega$ with $r(v)\neq 0$ in $\Omega$, that is,
     \begin{equation}\label{critical condition-P}
		\|v\|^p=r(v)^{q-p}A(v)-r(v)^{s-p}B(v).
	\end{equation}
    Then, we have	
	\begin{align*}
		I(r(v)v)=&\frac{r(v)^p}{p}\|v\|^p-\frac{r(v)^q}{q}A(v)+\frac{r(v)^s}{s}B(v)\\
			=&\left(\frac{1}{p}-\frac{1}{q}\right)A(v)r(v)^q+\left(\frac{1}{s}-\frac{1}{p}\right)B(v)r(v)^s.
	\end{align*}
 In particular,  if $r>0$ and $rv$ is a critical point of $I$, it holds
$$
		\langle I'(rv),v\rangle=0,
$$
	which is equivalent to \eqref{critical condition-P}.
\end{remark}
The above remark motivates us to consider the \textit{reduced functional} defined by
\begin{equation}\label{reduced functional}
	\mathcal{I}(v):=I(r(v)v)=\left(\frac{1}{p}-\frac{1}{q}\right)A(v)r(v)^q+\left(\frac{1}{s}-\frac{1}{p}\right)B(v)r(v)^s.
\end{equation}

Next, we shall need the following result to characterize the fibering method.

\begin{lemma}\label{fibering method }Let $H\in C^1(E\backslash\{0\},\mathds{R})$ such that $\langle H'(v),v\rangle \neq 0$ if $H(v)=1$. If $v_c \in \Omega$ is a critical point of $\mathcal{I}$ under the constraint $H(v)=1$, then $u=r(v_c)v_c$ is a critical point of $I$.
\end{lemma}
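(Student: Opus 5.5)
The proof is a Lagrange multiplier argument on the constraint manifold $\mathcal{M}:=\{v\in E\backslash\{0\}: H(v)=1\}$, followed by the observation that the multiplier must vanish. As in the statement, $\Omega$ is the open set on which $r\in C^1$ is defined (Remark~\ref{key remark}; by Lemma~\ref{existence of a function r C1} one may take $\Omega=E\backslash\{0\}$ in the cases considered). The hypothesis $\langle H'(v),v\rangle\neq 0$ on $\mathcal{M}$ gives $H'(v)\neq 0$ there, so $\mathcal{M}$ is a $C^1$ hypersurface near $v_c$. Hence there is $\lambda\in\mathds{R}$ with
$$
\mathcal{I}'(v_c)=\lambda H'(v_c)\quad\text{in } E^*.
$$
The plan is to test this identity against $v_c$ itself and show that the left-hand side vanishes.

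\textbf{Step 1: derivative of the reduced functional.} Since $\mathcal{I}(v)=I(r(v)v)$ with $r\in C^1$, the chain rule gives, for every $w\in E$,
$$
\langle \mathcal{I}'(v),w\rangle=\langle I'(r(v)v),\,r(v)w+\langle r'(v),w\rangle v\rangle .
$$
By Remark~\ref{key remark}, the defining relation \eqref{critical condition} is exactly $\langle I'(r(v)v),v\rangle=0$. Indeed, $\langle I'(rv),v\rangle=r^{p-1}\|v\|^p-r^{q-1}A(v)+r^{s-1}B(v)$, which is $r^{p-1}$ times $\|v\|^p-G(r,v)$. The term along $v$ therefore drops out, and
$$
\langle \mathcal{I}'(v),w\rangle=r(v)\langle I'(r(v)v),w\rangle,\qquad w\in E.
$$

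\textbf{Step 2: $\mathcal{I}$ is invariant along rays.} By the homogeneity $\mu r(\mu v)=r(v)$ from Lemma~\ref{existence of a function r C1}, we have $\mathcal{I}(\mu v)=I(r(\mu v)\mu v)=I(r(v)v)=\mathcal{I}(v)$ for all $\mu>0$. Differentiating at $\mu=1$ gives $\langle\mathcal{I}'(v),v\rangle=0$. This also follows directly from Step 1 with $w=v$.

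\textbf{Step 3: the multiplier vanishes.} Testing the Lagrange identity with $w=v_c$ gives $0=\langle \mathcal{I}'(v_c),v_c\rangle=\lambda\langle H'(v_c),v_c\rangle$. Since $\langle H'(v_c),v_c\rangle\neq0$, we get $\lambda=0$, hence $\mathcal{I}'(v_c)=0$. Step 1 then yields $r(v_c)\,I'(r(v_c)v_c)=0$, and since $r(v_c)>0$, $u=r(v_c)v_c$ is a critical point of $I$.

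\textbf{Main obstacle.} The only delicate point is the rigorous justification of the Lagrange multiplier rule on $\mathcal{M}$. This needs $\mathcal{M}$ to be a $C^1$ submanifold near $v_c$, which follows from $H'(v_c)\neq0$ via the transversality hypothesis. It also needs $\mathcal{I}$ to be $C^1$ on $\Omega$, which holds because $I\in C^1$ and $r\in C^1$. If one prefers to avoid manifold language, the same conclusion follows by splitting $E=\mathrm{span}\{v_c\}\oplus\ker H'(v_c)$. On $\ker H'(v_c)$, $\mathcal{I}'(v_c)$ vanishes by the constrained criticality of $v_c$. In the direction $v_c$, it vanishes by Step 2. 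Together these give $\mathcal{I}'(v_c)=0$ directly.
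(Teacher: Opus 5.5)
Your proof is correct and follows the paper's argument: the Lagrange multiplier rule, then the chain-rule identity $\langle \mathcal{I}'(v),w\rangle = r(v)\langle I'(r(v)v),w\rangle$ (the $r'(v)$ term drops out because $\langle I'(r(v)v),v\rangle=0$ on $\Omega$), then testing with $w=v_c$ to force $\lambda=0$. Your Step 2 (invariance along rays) and the splitting $E=\mathrm{span}\{v_c\}\oplus\ker H'(v_c)$ are valid but unnecessary, since Step 1 with $w=v_c$ already gives what Step 3 needs.
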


\begin{proof} Let $r\in C^1(\Omega, \mathds{R})$ as in  Remark \ref{key remark}, that is, for each $v \in \Omega\subset E\backslash \{0\}|$ the pair $(r(v),v)$ satisfies \eqref{critical condition}, more specifically
\begin{equation*}
    \|v\|^p=r(v)^{q-p}A(v)-r(v)^{s-p}B(v).
\end{equation*}
Then we can define $\mathcal{I}:\Omega \rightarrow \mathds{R}$ as in \eqref{reduced functional} and 
\begin{equation}\label{derivative app in r(v)}
    \langle I'(r(v)v),v\rangle=0, \forall v \in \Omega.
\end{equation}
In fact,
\begin{align*}
    \langle I'(r(v)v),v\rangle=& r(v)^{p-1} \|v\|^p-r(v)^{q-1}A(v)+r(v)^{s-1}B(v)\\
    =&r(v)^{p-1}[\|v\|^p-r(v)^{q-p}A(v)+r(v)^{s-p}B(v)]=0
\end{align*}
If $v_c$ is a critical point of $\mathcal{I}$ under the constraint $H(v)=1$, by the Lagrange Multiplier Theorem, there exists $\lambda \in \mathds{R}$ such that 
	\begin{equation}\label{conditional cp}
		\mathcal{I}'(v_c)=\lambda H'(v_c).
	\end{equation}
On the other hand, by the definition of $\mathcal{I}$ and \eqref{derivative app in r(v)} we have 
	\begin{equation}\label{derivative of reduced functional}
		\langle \mathcal{I}'(v),w\rangle=r(v)\langle I'(r(v)v),w \rangle+\langle r'(v),w\rangle \langle I'(r(v)v),v\rangle= r(v)\langle I'(r(v)v),w \rangle
	\end{equation}
	for all $w \in E$.
Then by \eqref{derivative app in r(v)} and \eqref{conditional cp} 
\begin{align*}
  0=r(v_c)  \langle {I}'(r(v_c)v_c),v_c\rangle=\langle \mathcal{I}'(v_c),v_c\rangle=\lambda \langle H'(v_c),v_c\rangle.
\end{align*}
Since $\langle H'(v_c),v_c\rangle\neq0$ we have that $\lambda=0$ and hence, by \eqref{conditional cp} and \eqref{derivative of reduced functional},
 	\[
	0=\mathcal{I}'(v_c)= r(v_c)I'(r(v_c)v_c).
 \]
 Therefore, $r(v_c)v_c$ is a critical point of $I$.
\end{proof}

Now, we are ready to proceed with the proof of Theorems \ref{Primeiro Existencia} and \ref{Segundo Existencia}.  

\begin{proof}[Proof of Theorem \ref{Primeiro Existencia}:] 
	For each fixed $v \in E\backslash\{0\}$, by Lemma \ref{existence of a function r C1}, there exists $r(v)>0$ such that the pair $(r(v),v)$ satisfies \eqref{critical condition} and hence 
 \begin{equation}%\label{c condition 2} % Precisa citar essa equação mais a frente.
     \|v\|^pr(v)^{p-q}+B(v)r(v)^{s-q}=A(v).
 \end{equation}
As a consequence, we can consider the reduced functional $\mathcal{I}$ as
	\begin{align*}
		\mathcal{I}(v)
		= \left(\frac{1}{s}-\frac{1}{q}\right)B(v)r(v)^s+\left(\frac{1}{p}-\frac{1}{q}\right)\|v\|^pr(v)^p>0.
	\end{align*}
If $S^1$ denotes the unity sphere in $E$, we can define 
$$
 M:=\inf_{v\in S^1}\mathcal{I}(v).  
$$
Now, consider a sequence $(v_n)$ such that $\|v_n\|=1$ and $M=\lim\mathcal{I}(v_n)$ . Going if necessary to a subsequence, we may assume that  $v_n \rightharpoonup v_0$ in $E$ with $\|v_0\|\leq 1$ and by Lemma \ref{compactness} 
	\begin{equation*}
		A(v_n)\rightarrow A(v_0)\geq 0\quad\text{and}\quad B(v_n)\rightarrow B(v_0)\geq 0.
	\end{equation*}
We claim that $v_0 \neq 0$. Indeed, suppose that $v_0=0$. By  Lemma~\ref{existence of a function r C1}, there exists a sequence $r(v_n)>0$ such that
 \begin{equation}\label{Soda1}
     \|v_n\|^p=r(v_n)^{q-p}A(v_n)-r(v_n)^{s-p}B(v_n).
  \end{equation}
 Using that  $\|v_n\|=1$, we get
		\begin{align*}
		1=r(v_n)^{q-p}A(v_n)-B(v_n)r(v_n)^{s-p}
		\leq r(v_n)^{q-p}A(v_n).
	\end{align*}
Since $q>p$ and $A(v_n)\rightarrow0$, we obtain   $r(v_n)\rightarrow \infty$. On the other hand, we have
	\begin{align*}
		\mathcal{I}(v_n)=\left(\frac{1}{s}-\frac{1}{q}\right)B(v_n)r(v_n)^s+\left(\frac{1}{p}-\frac{1}{q}\right)r(v_n)^p
		\geq \left(\frac{1}{p}-\frac{1}{q}\right)r(v_n)^p.
	\end{align*}
Taking the limit above, we obtain a contradiction and $v_0\neq 0$. From the last inequality, up to a subsequence, we can assume that $r(v_n)\rightarrow r_0\geq0$ and taking to the limit in \eqref{Soda1} we obtain 	\begin{equation}\label{Zico}
		r_0^{p-q}+B(v_0)r_0^{s-q}=A(v_0),
	\end{equation}
which implies that $r_0>0$.	

Next, we shall prove that  $\|v_0\|=1$. Otherwise, there exists $\mu>1$ such that $\|\mu v_0\|=1$. From Lemma~\ref{existence of a function r C1}, there are $r(v_0)>0$ such that 
$$
\|v_0\|^pr(v_0)^{p-q}+B(v_0)r(v_0)^{s-q}=A(v_0).
$$
This, combined with \eqref{Zico} and the fact that  $\mu>1$ implies 
$$
r_0^{p-q}+B(v_0)r_0^{s-q}<r(v_0)^{p-q}+B(v_0)r(v_0)^{s-q}, 
$$
equivalently 
$$
r(v_0)^{p-q}\left[\left(\frac{r(v_0)}{r_0}\right)^{q-p}-1\right]+B(v_0)r(v_0)^{s-q}\left[\left(\frac{r(v_0)}{r_0}\right)^{q-s}-1\right]<0.
$$
Since $\max\{s,p\}<q$, we have that $r_0>r(v_0)$. Now, consider the function 
\begin{equation*}
		\psi(t)=\left(\frac{1}{s}-\frac{1}{q}\right)B(v_0)t^s+\left(\frac{1}{p}-\frac{1}{q}\right)\|v_0\|^pt^p, \quad t>0
\end{equation*}
and observe that $\psi$ is strictly increasing. Thus,
\begin{align*}
M=\liminf_{n\rightarrow \infty}\mathcal{I}(v_n)\geq\left(\frac{1}{s}-\frac{1}{q}\right)B(v_0)r_0^s+\left(\frac{1}{p}-\frac{1}{q}\right)r_0^p\|v_0\|^p=&\psi(r_0).
\end{align*}
On the other hand, we have
\[
\psi(r_0)>\psi(r(v_0))=I(r(v_0)v_0)=I(\mu r(\mu v_0)v_0)=\mathcal{I}(\mu v_0),
\]
which contradicts the definition of $M$ because $\|\mu v_0\|=1$ and hence we concluded that $\|v_0\|=1$. From \eqref{Zico} and the uniqueness of the solution $r(v_0)$ we have $r_0=r(v_0)$ and
\begin{align*}
		M=\lim_{n \rightarrow \infty}\mathcal{I}(v_n)
		=& \lim_{n\rightarrow \infty} \left(\frac{1}{s}-\frac{1}{q}\right)B(v_n)r(v_n)^s+\left(\frac{1}{p}-\frac{1}{q}\right)r(v_n)^p\\
		=& \left(\frac{1}{s}-\frac{1}{q}\right)B(v_0)r_0^s+\left(\frac{1}{p}-\frac{1}{q}\right)r_0^p\\
		=& \mathcal{I}(v_0).
\end{align*}
Since $v_0$ is a critical point of $\mathcal{I}$ under $S^1$ so is $|v_0|$ and we can assume $v_0\geq0$. Applying Lemma \ref{fibering method } with $H(v)=\|v\|^p$, we conclude that  $u=r_0 v_0$ is a critical point of $I$, which completes the proof.
\end{proof}

\begin{proof}[Proof of Theorem \ref{Segundo Existencia}] For each fixed  $v \in E\backslash\{0\}$, by Lemma \ref{existence of a function r C1} there exist $r(v)$ such that 
 \begin{equation}\label{c condition 2} 
     \|v\|^pr(v)^{p-q}+B(v)r(v)^{s-q}=A(v),
 \end{equation}
and hence, we can write the reduced functional $\mathcal{I}$ as
	\begin{align*}
		\mathcal{I}(v)=& \left(\frac{1}{s}-\frac{1}{q}\right)B(v)r(v)^s+\left(\frac{1}{p}-\frac{1}{q}\right)\|v\|^pr(v)^p<0.
	\end{align*}
 
If $\|v\|=1$, we see that $A$ and $B$ are bounded because of our embedding results. From  \eqref{c condition 2}, it follows that
\begin{equation*}
    0<r(v)^{p-q}\leq r(v)^{p-q}+B(v)r(v)^{s-q}=A(v),
\end{equation*}
which implies that $r$ is bounded because $p>q$. Therefore, we can consider the minimization problem
	$$
		-\infty<M:=\inf_{v \in S^1}\mathcal{I}(v)<0.  
	$$
	 Let $(v_n) \subset S^1$ be a minimizing sequence. Up to a subsequence, we can assume that $v_n \rightharpoonup v_0$ weakly in $E$ with $\|v_0\|\leq1$. 
	Furthemore, by Lemma \ref{compactness} 
	\begin{equation*}
		A(v_n)\rightarrow A(v_0)\quad\text{and}\quad B(v_n)\rightarrow B(v_0).
	\end{equation*}
 Since $(r(v_n))$ is bounded, up to a subsequence, we can assume that $r(v_n)\rightarrow r_0\geq 0$.

 Now observe that $I$ is weakly lower semicontinuous and $r(v_n)v_n \rightharpoonup r_0v_0$, then
  \[
  I(r_0v_0)\leq \liminf I(r(v_n)v_n)=\liminf \mathcal{I}(v_n)=M<0
  \]
and so $r_0v_0\neq 0$. From \eqref{c condition 2}, we have 	
\begin{equation*}\label{r0 equation}
		\|v_n\|^pr(v_n)^{p-q}+B(v_n)r(v_n)^{s-q}= A(v_n).
	\end{equation*}
Passing to the limit and observing that $\|v_0\|\leq 1$ we obtain
\begin{equation*}
    \|v_0\|^p r_0^{p-q}+B(v_0)r_0^{s-q}\leq A(v_0).
\end{equation*}
On the other hand, applying Lemma~\ref{existence of a function r C1} for $v_0$, we have
\begin{equation*}\label{key equation}
    \|v_0\|^pr(v_0)^{p-q}+B(v_0)r(v_0)^{s-q}=A(v_0),
\end{equation*}
which implies that $r_0\leq r(v_0)$. Now, suppose by contradiction that $r_0<r(v_0)$ and consider the function 
	\begin{equation*}
		\psi(t):=I(tv_0)=\frac{t^p}{p}\|v_0\|^p-\frac{t^q}{q}A(v_0)+\frac{t^s}{s}B(v_0),\quad t\in [0,r(v_0)]
	\end{equation*}
	and observe that $\psi$ is strictly decreasing. Indeed, first note that $\psi(0)=0$ and $\psi(r(v_0))=\mathcal{I}(v_0)<0$. In addition, we observe that $\psi'(0)=0$ and for $t\neq 0$,
 \begin{equation*}
     0=\psi'(t)=t^{p-1}\|v_0\|^p-t^{q-1}A(v_0)+t^{s-1}B(v_0)\Leftrightarrow \|v_0\|^p=t^{q-p}A(v_0)-t^{s-p}B(v_0)\Leftrightarrow t=r(v_0).
 \end{equation*}
Consequently, $\psi$ must be strictly decreasing on $[0,r(v_0)]$. Thus,
 \begin{equation*}
M=\liminf I(r(v_n)v_n)\geq I(r_0v_0)>I(r(v_0)v_0)=\mathcal{I}(v_0).    
 \end{equation*}
By Lemma \ref{existence of a function r C1} we have $\mu r(\mu v )=r(v)$ for all $v\neq 0$ and taking $\mu =\|v_0\|^{-1}$ we have $\mu v_0 \in S^1$ and
\begin{equation*}
    \mathcal{I}(\mu v_0)=I(\mu r(\mu v_0)v_0)=I(r(v_0)v_0)=\mathcal{I}(v_0)< M,
\end{equation*}
which is a contradiction and therefore $r(v_0)=r_0$. Then,
\begin{equation*}
    1=\lim_{n\rightarrow \infty} \|v_n\|^p=r_0^{q-p}A(v_0)-r_0^{s-p}B(v_0)=\|v_0\|^p.
\end{equation*}
and
\begin{align*}
		M=\lim_{n \rightarrow \infty}\mathcal{I}(v_n)
		=& \lim_{n\rightarrow \infty} \left(\frac{1}{s}-\frac{1}{q}\right)B(v_n)r(v_n)^s+\left(\frac{1}{p}-\frac{1}{q}\right)\|v_n\|^pr(v_n)^p\\
		=& \left(\frac{1}{s}-\frac{1}{q}\right)B(v_0)r_0^s+\left(\frac{1}{p}-\frac{1}{q}\right)\|v_0\|^pr_0^p\\
		=& \mathcal{I}(v_0).
	\end{align*}
Since $v_0$ is a critical point of $\mathcal{I}$ under $S^1$, it follows that $|v_0|$ is also a critical point. Thus, we can assume without loss of generality that $v_0\geq 0$. Applying Lemma \ref{fibering method }, we conclude that  $u=r_0 v_0$ is a critical point of $I$, which completes the proof.
\end{proof}

Moving forward, we are proceeding to prove our third existence result.

\begin{lemma}  Assume the assumptions of Theorem \ref{Terceiro Exitencia}. Then for each $v \in E\backslash\{0\}$  the function $G(.,v)$ defined by \eqref{G definition} has a unique critical point at 	
		\begin{equation}\label{critical point r barra}
			\overline{r}(v)=\left(\frac{A(v)(q-p)}{B(v)(s-p)}\right)^{1/(s-q)}.
		\end{equation}
		Moreover, 
		\begin{equation}\label{G de r barra}
			G(\overline{r}(v),v)=	\max_{r>0}G(r,v)=\left(\frac{A(v)^{s-p}}{\eta(p,q,s) B(v)^{q-p}}\right)^{1/(s-q)}>0.
		\end{equation}
		\end{lemma}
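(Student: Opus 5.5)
The plan is an elementary one-variable calculus argument, as in Lemma~\ref{Max G}, after first checking that $A(v)$ and $B(v)$ are finite and positive for $v\neq 0$.

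\emph{Well-definedness.} Under the hypotheses of Theorem~\ref{Terceiro Exitencia} we have $a,b\in\mathcal{K}_0$ and $p<q<s<p^*$. By Lemma~\ref{compactness}(2), $E$ embeds continuously into $L^q(\mathds{R}^N_+,a)$ and into $L^s(\mathds{R}^N_+,b)$, so $A$ and $B$ are finite on $E$. Since $a,b$ are continuous and strictly positive, $v\neq0$ gives $A(v)>0$ and $B(v)>0$. This positivity is the only point that needs care. Hypothesis \eqref{ab condition} is not needed here.

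\emph{Critical point.} Fix $v\neq 0$ and differentiate:
$$
\partial_r G(r,v)=r^{q-p-1}\big[(q-p)A(v)-(s-p)B(v)r^{s-q}\big].
$$
Because $s>q$, the bracket is strictly decreasing in $r$, so it has exactly one zero on $(0,\infty)$, namely $\overline r(v)$ in \eqref{critical point r barra}. The derivative is positive before $\overline r(v)$ and negative after it. Hence $\overline r(v)$ is the unique critical point and the global maximum on $(0,\infty)$. For context, $G(r,v)\to0^+$ as $r\to0^+$ (since $0<q-p<s-p$, the positive $A$-term dominates) and $G(r,v)\to-\infty$ as $r\to\infty$.

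\emph{Maximum value.} The critical-point relation gives $B(v)\overline r^{\,s-p}=\frac{q-p}{s-p}A(v)\overline r^{\,q-p}$. Substituting,
$$
G(\overline r,v)=\frac{s-q}{s-p}A(v)\,\overline r^{\,q-p}.
$$
Inserting $\overline r^{\,q-p}=\big(\frac{(q-p)A(v)}{(s-p)B(v)}\big)^{(q-p)/(s-q)}$ and collecting powers gives
$$
A(v)^{(s-p)/(s-q)}B(v)^{-(q-p)/(s-q)}\,\frac{(s-q)(q-p)^{(q-p)/(s-q)}}{(s-p)^{(s-p)/(s-q)}}.
$$
The numerical constant equals $\big((s-q)^{s-q}(q-p)^{q-p}/(s-p)^{s-p}\big)^{1/(s-q)}=\eta(p,q,s)^{-1/(s-q)}$, which yields \eqref{G de r barra}. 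Positivity holds because $A(v),B(v)>0$.
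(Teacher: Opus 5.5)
Your proposal is correct and follows the paper's route. The paper gives no separate proof here and relies on the same one-variable analysis as Lemma~\ref{Max G}; the substitution $B(v)\overline{r}(v)^{s-p}=\frac{q-p}{s-p}A(v)\overline{r}(v)^{q-p}$ you use is exactly the computation in Remark~\ref{OmegaNaoVaZIO}, and your check via Lemma~\ref{compactness} that $A(v),B(v)\in(0,\infty)$ for $v\neq 0$ supplies a detail the paper leaves implicit.
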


Under the assumptions in Theorem \ref{Terceiro Exitencia}, we introduce the set,
$$
		\Omega:=\{v \in E\backslash\{0\}: \|v\|^p<G(\overline{r}(v),v)\}.
  $$
 
\begin{remark}\label{OmegaNaoVaZIO}
If $\mathcal{D}_1$ is the set defined in \eqref{D hypothesis}, then $\mathcal{D}_1\subset \Omega$ and hence $\Omega\neq\emptyset$. Indeed, first, we observe that by \eqref{critical point r barra} we have
	\begin{equation}\label{B equation}
		B(v)=\left(\frac{q-p}{s-p}\right)\overline{r}(v)^{q-s}A(v),
	\end{equation}
	and  from the definition of $G$, we obtain
	\begin{equation}\label{G(r,v) alternative form}
		G(\overline{r}(v),v)=\overline{r}(v)^{q-p}A(v)-\left(\frac{q-p}{s-p}\right)\overline{r}(v)^{q-p}A(v)= \left(\frac{s-q}{s-p}\right) A(v)\overline{r}(v)^{q-p}.
	\end{equation}	
	If $v \in \mathcal{D}_1$, we see that 
	\begin{align*}
		\|v\|^p<&\left(\frac{p}{q}\right)\eta(p,q,s)^{\frac{1}{q-s}}A(v)^{\frac{s-p}{s-q}}B(v)^{\frac{p-q}{s-q}}.
	\end{align*}
	Thus, from \eqref{B equation} and \eqref{G(r,v) alternative form}, it follows 
	\begin{align*}
		\|v\|^p&<\left(\frac{p}{q}\right)\frac{(s-q)(q-p)^{\frac{q-p}{s-q}}}{(s-p)^{\frac{s-p}{s-q}}}A(v)^{\frac{s-p}{s-q}}\left[\left(\frac{q-p}{s-p}\right)\overline{r}(v)^{q-s}A(v)\right]^{\frac{p-q}{s-q}}\\
		&=\frac{p}{q}\left(\frac{s-q}{s-p}\right)\overline{r}(v)^{q-p}A(v)\\
  &<G(\Bar{r}(v),v),
	\end{align*}
and so we conclude that $\mathcal{D}_1\subset \Omega$. 
\end{remark}

Next, we will prove some technical properties of $\Omega$ that play an essential role in proving Theorem~\ref{Terceiro Exitencia}.

\begin{lemma}\label{Propriedades Omega} If $p<q<s<p^*$, for each $v \in \Omega$ there exists a unique real number $r(v)>\overline{r}(v)$ such that the pair $(r(v),v)$ satisfies
$$
\|v\|^p=r(v)^{q-p}A(v)-r(v)^{s-p}B(v)=G(r(v),v),
$$
and $r\in C^1(\Omega, \mathds{R})$. Furthermore, for any $v\in \Omega$ and $\mu>0$, it holds $\mu v\in \Omega$, and as a consequence, $\Omega\cap S^1\neq\emptyset$.
\end{lemma}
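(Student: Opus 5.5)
Fix $v\in\Omega$ and study the one-variable function $G(\cdot,v)$ on $(0,\infty)$. Since $p<q<s$ and $A(v),B(v)>0$ for $v\neq0$ (the weights are positive and continuous and $v\not\equiv0$), we have $G(r,v)=r^{q-p}\bigl(A(v)-B(v)r^{s-q}\bigr)$. Hence $G(r,v)\to0$ as $r\to0^+$ and $G(r,v)\to-\infty$ as $r\to\infty$. By the preceding lemma, $G(\cdot,v)$ has a unique critical point $\overline r(v)$, which is its global maximum. It follows that $G(\cdot,v)$ is strictly increasing on $(0,\overline r(v))$ and strictly decreasing on $(\overline r(v),\infty)$.

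\textbf{Existence and uniqueness of $r(v)$.} Because $v\in\Omega$, we have $G(\overline r(v),v)>\|v\|^p$, while $G(r,v)\to-\infty<\|v\|^p$ as $r\to\infty$. The intermediate value theorem therefore gives some $r(v)>\overline r(v)$ with $G(r(v),v)=\|v\|^p$. This root is unique on $(\overline r(v),\infty)$ by strict monotonicity there.

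\textbf{Regularity.} Set $F(r,w)=G(r,w)-\|w\|^p$ on $(0,\infty)\times(E\setminus\{0\})$. This map is $C^1$, since $A$, $B$ and $\|\cdot\|^p$ are $C^1$ on $E$ by Lemma~\ref{compactness} and the standard Nemytskii arguments. At $(r(v),v)$ we have $\partial_rF(r(v),v)=\partial_rG(r(v),v)<0$, because $r(v)>\overline r(v)$ lies in the region of strict decrease and the critical point is unique. The implicit function theorem then gives a $C^1$ map $\tau$ near $v$ with $F(\tau(w),w)=0$ and $\tau(v)=r(v)$.

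To identify $\tau$ with $r$ I need two facts near $v$. First, $\Omega$ is open, since $w\mapsto G(\overline r(w),w)-\|w\|^p$ is continuous; here $\overline r(w)$ is given by the explicit formula \eqref{critical point r barra}, which is continuous in $A(w)$ and $B(w)$. Second, $\tau(w)>\overline r(w)$ for $w$ near $v$, which follows from continuity of $\tau$ and $\overline r$ together with the strict inequality at $v$. Uniqueness of $r(w)$ then forces $\tau(w)=r(w)$, so $r\in C^1(\Omega,\mathbb R)$. This step, checking that the implicit branch stays on the correct side of $\overline r$ and that $\Omega$ is open, is the only delicate point; everything else is elementary.

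\textbf{Scaling.} For $\mu>0$, homogeneity gives $A(\mu v)=\mu^qA(v)$, $B(\mu v)=\mu^sB(v)$ and $\|\mu v\|^p=\mu^p\|v\|^p$. Hence $G(r,\mu v)=\mu^pG(\mu r,v)$. Maximizing in $r$ yields $\overline r(\mu v)=\overline r(v)/\mu$ and $G(\overline r(\mu v),\mu v)=\mu^pG(\overline r(v),v)>\mu^p\|v\|^p=\|\mu v\|^p$, so $\mu v\in\Omega$. Alternatively, the quantity $A^{s-p}/B^{q-p}$ in \eqref{G de r barra} scales exactly like $\|v\|^{p(s-q)}$, which gives the same conclusion.

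Finally, $\Omega\neq\emptyset$ by Remark~\ref{OmegaNaoVaZIO}, since $\mathcal D_1\neq\emptyset$. Taking any $v\in\Omega$ and $\mu=\|v\|^{-1}$ gives $\mu v\in\Omega\cap S^1$. As a by-product, the same scaling gives $\mu r(\mu v)=r(v)$.
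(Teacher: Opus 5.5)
Your proof is correct and follows essentially the same route as the paper. Existence comes from the intermediate value theorem on $(\overline r(v),\infty)$ using $G(r,v)\to-\infty$; uniqueness comes from strict decrease of $G(\cdot,v)$ beyond $\overline r(v)$; regularity comes from the implicit function theorem with $\partial_r G(r(v),v)<0$; and invariance under dilations comes from the scaling $\mu\,\overline r(\mu v)=\overline r(v)$. The differences are minor: you get the monotonicity from the preceding lemma rather than the paper's explicit formula for $\partial_r G$, and you check the identification of the implicit branch with $r$ (openness of $\Omega$, the branch staying above $\overline r$), which the paper leaves to ``arguing as in'' the earlier lemma on $r$.
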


\begin{proof} If $v\in\Omega$ we have $\|v\|^p<G(\bar{r}(v),v)$. Since $G(r,v)=r^{q-p}\left(A(v)-B(v)r^{s-q}\right)$ and $p<q<s$, it follows that     
$$
\lim_{r\rightarrow \infty}G(r,v)=-\infty,
$$
 and so by the Intermediate Value Theorem, there exists a real number $r(v)>\bar{r}(v)$ such that the pair $(r(v),v)$ verifies $\|v\|^p=G(r(v),v)$. To prove that $r(v)$ is unique, we observe $(q-p)\bar{r}(v)^{q-p}A(v)=(s-p)\bar{r}(v)^{s-p}B(v)$ and hence we can write 
$$
G(r,v)=A(v)\left(r^{q-p}-\frac{q-p}{s-p}\bar{r}(v)^{q-s}r^{s-p}\right).
$$ 
Consequently, 
$$
\begin{aligned}
\frac{\partial G}{\partial r}(r,v)&=(q-p)r^{s-p-1} A(v)(r^{q-s}-\overline{r}(v)^{q-s})\\
&=(q-p)r^{s-p-1} A(v)\left(\frac{1}{r^{s-q}}-\frac{1}{\overline{r}(v)^{s-q}}\right)<0,
\end{aligned}
$$
for all $r>\bar{r}(v)$, thereby implying the uniqueness of $r(v)$.

To verify that the map $r$ is a $C^1$, by setting $r=r(v)$, we obtain
	\begin{equation}\label{G_r <0}
		\frac{\partial G}{\partial r}(r(v),v)=(q-p) r(v)^{p-s+1}A(v)(r(v)^{q-s}-\overline{r}(v)^{q-s})<0.
	\end{equation}
Now, considering the function $f:(0,\infty)\times \Omega\rightarrow \mathds{R}$ given by 
	\begin{equation*}
		f(r,v)=G(r,v)-\|v\|^p,
	\end{equation*}
by \eqref{G_r <0} we see that $\frac{\partial f}{\partial r}(r(v),v)<0$. Thus, using the Implicit Function Theorem and arguing as in the proof of Lemma \ref{existence of a function r C1}, we obtain that $r \in C^1(\Omega,\mathds{R})$.

Now, suppose that $v\in\Omega$, that is 
$$
\|v\|^p<G(\bar{r}(v),v)=\frac{s-q}{s-p}\bar{r}(v)^{q-p}A(v).
$$
Since, $\bar{r}(v)=\mu\bar{r}(\mu v)$ for all $\mu>0$, we get  	
$$
\|\mu v\|^p<\frac{s-q}{s-p}(\bar{r}(\mu v))^{q-p}A(\mu v)=G(\bar{r}(\mu v),\mu v),
$$
which implies that $\mu v\in\Omega$ and this completes the proof.
\end{proof}.

\begin{lemma}\label{Binferior} The following statement holds
$$
\inf_{v \in \Omega\cap S^1}B(v)>0.
$$
\end{lemma}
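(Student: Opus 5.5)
The plan is to combine the defining inequality of $\Omega$ with the integrability condition \eqref{ab condition}. That condition lets us control $A$ by a power of $B$ through H\"older's inequality, and the combination will force $B(v)$ to be bounded below on $\Omega\cap S^1$.

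First, for $v\in\Omega\cap S^1$ we have $\|v\|=1$, so the definition of $\Omega$ together with \eqref{G de r barra} gives
$$
1<G(\overline{r}(v),v)=\left(\frac{A(v)^{s-p}}{\eta(p,q,s)B(v)^{q-p}}\right)^{1/(s-q)},
$$
that is,
$$
\eta(p,q,s)\,B(v)^{q-p}<A(v)^{s-p}.
$$

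Next, we bound $A$ in terms of $B$. Write $a|v|^q=\big(a^{1/q}b^{-1/s}\big)^q\,\big(b|v|^s\big)^{q/s}$ and apply H\"older's inequality with exponents $s/(s-q)$ and $s/q$:
$$
A(v)\le \left\|\frac{a^{1/q}}{b^{1/s}}\right\|_{L^{\frac{sq}{s-q}}}^{q}\,B(v)^{q/s}=:K\,B(v)^{q/s}.
$$
The constant $K$ is finite by \eqref{ab condition}. Substituting this into the previous inequality gives
$$
\eta\,B(v)^{q-p}<K^{s-p}B(v)^{q(s-p)/s}.
$$
We may divide by $B(v)^{q-p}$, since $B(v)>0$ because $v\neq0$ and $b>0$. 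This yields
$$
\eta<K^{s-p}B(v)^{\frac{q(s-p)}{s}-(q-p)}.
$$

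The exponent is the crucial point. A direct computation gives
$$
\frac{q(s-p)}{s}-(q-p)=\frac{qs-qp-qs+ps}{s}=\frac{p(s-q)}{s}>0.
$$
Since this exponent is positive, we conclude
$$
B(v)>\big(\eta(p,q,s)K^{p-s}\big)^{s/(p(s-q))}.
$$
The right-hand side is a positive constant independent of $v$, which proves the claim.

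The only step that needs care is checking the sign of that exponent, and it is positive precisely because $s>q$. Beyond that, we only need $A(v)$ and $B(v)$ to be finite on $E$, which holds by Lemma \ref{compactness} since $a,b\in\mathcal{K}_0$. Alternatively, finiteness is not needed at all: the chain of inequalities is valid whenever $B(v)<\infty$, and if $B(v)=\infty$ the lower bound holds trivially.
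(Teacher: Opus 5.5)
Your proof is correct and follows essentially the same route as the paper. You turn $v\in\Omega\cap S^1$ into $\eta(p,q,s)B(v)^{q-p}<A(v)^{s-p}$, bound $A(v)\le K\,B(v)^{q/s}$ by H\"older using the integrability of $a^{1/q}/b^{1/s}$ (your $K$ is exactly the paper's $C_{a,b}$), and read off the uniform lower bound from the positive exponent $p(s-q)/s$.
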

\begin{proof}
For any $v \in \Omega\cap S^1$,
from \eqref{critical point r barra} we get
	\begin{align*}
		1=\|v\|^p<G(\overline{r}(v),v)=&\left(\frac{s-q}{s-p}\right)A(v)\overline{r}(v)^{q-p}\\
		=& \left(\frac{s-q}{s-p}\right)A(v)\left(\frac{A(v)(q-p)}{B(v)(s-p)}\right)^{(q-p)/(s-q)},
	\end{align*}
	which implies 
 $$
		\eta(p,q,s)B(v)^{q-p}<A(v)^{s-p}.
  $$
By using H\"{o}lder's inequality, it follows
	\begin{align*}
		A(v)=\int_{\mathds{R}^N_+}a|v|^q\, \mathrm{d} x
		=& \int_{\mathds{R}^N_+} \frac{a}{b^{q/s}}b^{q/s}|v|^q\, \mathrm{d} x\\
		\leq& \left(\int_{\mathds{R}^N_+} \left[\frac{a}{b^{q/s}}\right]^{\frac{s}{s-q}}\, \mathrm{d} x\right)^{(s-q)/s}\left(\int_{\mathds{R}^N_+} b|v|^s\, \mathrm{d} x\right)^{q/s}.
	\end{align*}
Thus,  we get
	$
		A(v)\leq C_{a,b}B(v)^{q/s}
	$
	with
	\[
	0<C_{a,b}=\left(\int_{\mathds{R}^N_+} \left[\frac{a^{1/q}}{b^{1/s}}\right]^{\frac{sq}{s-q}}\, \mathrm{d} x\right)^{(s-q)/s},
	\]
 which is finite due to assumption \eqref{ab condition}. By combining the above inequalities, we obtain
	\[
	\eta(p,q,s)B(v)^{q-p}<A(v)^{s-p}\leq C_{a,b}^{s-p}B(v)^{(s-p)q/s}, 
	\]
and hence 
	$
		0<\eta(p,q,s) C_{a,b}^{p-s}< B(v)^{(s-q)p/s}$,
thereby yielding the desired result.
\end{proof}

\begin{lemma}\label{Lira}If $\mathcal{D}_1$ is the set defined in \eqref{D hypothesis}, then  $\mathcal{D}_1\cap S^1\neq \varnothing$, where $S^1$ is the unit sphere in $E$. Moreover,
\begin{equation}\label{I Negativo em D}
\mathcal{I}(v)<0, \quad \forall v\in\mathcal{D}_1. 
\end{equation}
\end{lemma}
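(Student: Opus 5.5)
The proof splits into two parts: scale invariance of $\mathcal{D}_1$, and a computation of $\mathcal{I}$ along the root $r(v)>\overline{r}(v)$ from Lemma~\ref{Propriedades Omega}.

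\emph{Step 1: $\mathcal{D}_1\cap S^1\neq\varnothing$.} The defining inequality of $\mathcal{D}_1$ in \eqref{D hypothesis} is homogeneous. For $\mu>0$ we have $A(\mu v)=\mu^qA(v)$, $B(\mu v)=\mu^sB(v)$ and $\|\mu v\|=\mu\|v\|$. The left side therefore scales by $\mu^{q(s-p)}$ and the right side by $\mu^{s(q-p)+p(s-q)}$. Since $s(q-p)+p(s-q)=qs-ps=q(s-p)$, the two exponents agree, so $\mu v\in\mathcal{D}_1$ whenever $v\in\mathcal{D}_1$. Because $\mathcal{D}_1\neq\emptyset$ by hypothesis, taking $\mu=\|v\|^{-1}$ (note $v\neq0$, since otherwise both sides vanish) gives a point of $\mathcal{D}_1\cap S^1$.

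\emph{Step 2: $\mathcal{I}<0$ on $\mathcal{D}_1$.} Fix $v\in\mathcal{D}_1$. Remark~\ref{OmegaNaoVaZIO} gives $v\in\Omega$, so $r(v)>\overline{r}(v)$ is well defined by Lemma~\ref{Propriedades Omega}. Using \eqref{critical condition} to eliminate $\|v\|^p$, as in \eqref{reduced functional}, we get
\[
\mathcal{I}(v)=r(v)^q\Big[\Big(\tfrac1p-\tfrac1q\Big)A(v)-\Big(\tfrac1p-\tfrac1s\Big)B(v)r(v)^{s-q}\Big].
\]
Hence $\mathcal{I}(v)<0$ is equivalent to $r(v)^{s-q}>\frac{s(q-p)}{q(s-p)}\frac{A(v)}{B(v)}$. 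Call the right-hand threshold $t_*^{s-q}$. By \eqref{critical point r barra}, $t_*=(s/q)^{1/(s-q)}\,\overline{r}(v)>\overline{r}(v)$.

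By the proof of Lemma~\ref{Propriedades Omega}, $G(\cdot,v)$ is strictly decreasing on $(\overline{r}(v),\infty)$, and $r(v)$ is the unique point there with $G=\|v\|^p$. So $r(v)>t_*$ is equivalent to $G(t_*,v)>\|v\|^p$. Rewrite $G$ through \eqref{B equation}:
\[
G(t_*,v)=A(v)t_*^{q-p}\Big(1-\tfrac{q-p}{s-p}\tfrac sq\Big)=\tfrac{p(s-q)}{q(s-p)}A(v)t_*^{q-p}.
\]
Then express $t_*$ via $\overline{r}(v)$ and apply the defining inequality of $\mathcal{D}_1$ in the form already derived in Remark~\ref{OmegaNaoVaZIO}, namely
\[
\|v\|^p<\tfrac pq\cdot\tfrac{s-q}{s-p}\,\overline{r}(v)^{q-p}A(v).
\]
Since $t_*^{q-p}\ge\overline{r}(v)^{q-p}$, this yields $G(t_*,v)\ge\frac pq\frac{s-q}{s-p}A(v)\overline{r}(v)^{q-p}>\|v\|^p$, which closes the argument.

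The main obstacle is Step 2: checking that the constant $(q/p)^{s-q}$ in $\mathcal{D}_1$ is strong enough. The estimate above appears to go through because $t_*>\overline{r}(v)$ only increases $t_*^{q-p}$. If a constant mismatch appears, the fallback is to compare directly with the threshold $\frac pq\frac{s-q}{s-p}A\overline{r}^{q-p}$, using monotonicity of $G$ on $[\overline{r},\infty)$.
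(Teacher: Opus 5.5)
Your proof is correct. Step 1 is essentially the paper's argument: the paper checks cone invariance through the equivalent form $\|v\|^p<\frac{p}{q}\frac{s-q}{s-p}\overline{r}(v)^{q-p}A(v)$ and the scaling $\mu\overline{r}(\mu v)=\overline{r}(v)$, while you check homogeneity of the defining inequality directly. One small slip: the intermediate expression should read $qs-pq$, not $qs-ps$; your conclusion $q(s-p)$ is right.

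Step 2 takes a genuinely different route. The paper uses the constraint to eliminate $B(v)r(v)^s$ rather than $\|v\|^p$, writing
\[
\mathcal{I}(v)=\Big(\frac1s-\frac1q\Big)A(v)r(v)^q+\Big(\frac1p-\frac1s\Big)\|v\|^pr(v)^p .
\]
It then bounds $\|v\|^pr(v)^p$ using the $\mathcal{D}_1$ inequality together with $\overline{r}(v)^{q-p}r(v)^p<r(v)^q$. The two resulting coefficients of $A(v)r(v)^q$ cancel exactly, so $\mathcal{I}(v)<0$ follows in two lines, with no use of the monotonicity of $G(\cdot,v)$.

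You instead keep $\mathcal{I}$ in its $A,B$ form and identify the threshold $t_*=(s/q)^{1/(s-q)}\overline{r}(v)$, for which $\mathcal{I}(v)<0\iff r(v)>t_*$. You then use the strict decrease of $G(\cdot,v)$ on $(\overline{r}(v),\infty)$ to turn this into $\|v\|^p<G(t_*,v)$. This is a bit longer, but every step except the final use of $t_*^{q-p}\ge\overline{r}(v)^{q-p}$ is an equivalence. It therefore gives an exact description of where $\mathcal{I}<0$ on $\Omega$: namely $\|v\|^p<\frac{p(s-q)}{q(s-p)}\big(\frac sq\big)^{(q-p)/(s-q)}\overline{r}(v)^{q-p}A(v)$. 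In the form of \eqref{D hypothesis}, this amounts to replacing $(q/p)^{s-q}$ by the smaller constant $(q/p)^{s-q}(q/s)^{q-p}$.

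So your closing worry about a constant mismatch is unfounded. The constant in $\mathcal{D}_1$ has room to spare, and no fallback is needed.
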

\begin{proof}
     If $v \in \mathcal{D}_1$, the computation in Remark~\ref{OmegaNaoVaZIO} shows that 
 \begin{equation}\label{Desigualadde D}
 \|v\|^p<\frac{p}{q}\left(\frac{s-q}{s-p}\right)\bar{r}(v)^{q-p}A(v),
 \end{equation}
 and for $\mu>0$, by \eqref{critical point r barra} we easily obtain 
$$
\mu \bar{r}(\mu v)=\bar{r}(v),\quad \forall v \in E\backslash\{0\}.
 $$
 Thus, 
 	\begin{align*}
		\|\mu v\|^p<&\frac{p}{q}\left(\frac{s-q}{s-p}\right)\mu^{p-q}\bar{r}(v)^{q-p}A(\mu v)
		=\frac{p}{q}\left(\frac{s-q}{s-p}\right)\bar{r}(\mu v)^{q-p}A(\mu v),
	\end{align*} 
which implies that $\mu v \in \mathcal{D}_1$. In particular, choosing $\mu=\|v\|^{-1}$ we conclude that $\mathcal{D}_1\cap S^1\neq \varnothing$.	

To verify \eqref{I Negativo em D}, since the pair $(r(v),v)$ satisfies 
\begin{equation*}
		B(v)r(v)^s=A(v)r(v)^q-\|v\|^pr(v)^p, 
	\end{equation*}	
 from  \eqref{reduced functional}, the fact that $\bar{r}(v)<r(v)$ for each $v \in \mathcal{D}_1\subset\Omega$ and inequality \eqref{Desigualadde D}, we get
	\begin{align*}
		\mathcal{I}(v)
		=& \left(\frac{1}{s}-\frac{1}{q}\right)A(v)r(v)^q+\left(\frac{1}{p}-\frac{1}{s}\right) \|v\|^p r(v)^p\\
		<& \left(\frac{1}{s}-\frac{1}{q}\right)A(v)r(v)^q+\left(\frac{1}{p}-\frac{1}{s}\right)\frac{p}{q}\left(\frac{s-q}{s-p}\right)A(v)r(v)^q.
	\end{align*}
Since the last term of the inequality above is zero, this completes the proof. \end{proof}

%%%%%%%%%%%%%%%%%%%%%%%%%%%%%%
\begin{proof}[Proof of Theorem \ref{Terceiro Exitencia}:] Let $r \in C^1(\Omega,\mathds{R})$ be the function given by the Lemma \ref{Propriedades Omega}. For $v \in S^1$, we have
	\begin{equation*}
		1=r(v)^{q-p}A(v)-r(v)^{s-p}B(v),
	\end{equation*}
	which implies that
	\begin{equation}\label{A/B}
		r(v)<\left(\frac{A(v)}{B(v)}\right)^{1/(s-q)}, \quad v \in S^1.
	\end{equation}
Since $A$ is bounded in $S^1$, by Lemma~\ref{Binferior} we have that $r$ is bounded in $\Omega\cap S^1$. Hence, $\mathcal{I}$ is lower bounded in $\Omega\cap S^1$ and hence in view of Lemma~\ref{Lira}
\begin{equation}\label{variational problem}
		M=\inf_{v \in \Omega\cap S^1} \mathcal{I}(v)<0.
\end{equation}

Let $(v_n)\subset \Omega\cap S^1$ be a minimizing sequence. Up to a subsequence, $v_n \rightharpoonup v_0$ weakly in $E$ with $\|v_0\|\leq1$. Lemmas~\ref{compactness} then imply
	$$A(v_n)\rightarrow A(v_0)\quad\mbox{and}\quad B(v_n)\rightarrow B(v_0).
 $$
By Lemma~\ref{Propriedades Omega}, the sequence $(r(v_n))$ satisfies $r(v_n)>\Bar{r}(v_n)$. Moreover, from \eqref{A/B}, the sequence $(r(v_n))$ is bounded and, up to a subsequence, we can assume that $r(v_n)\rightarrow r_0\geq0$. Thus, we obtain 
\[
	0>M=\liminf\mathcal{I}(v_n)\geq\left(\frac{1}{p}-\frac{1}{q}\right)A(v_0) r_0^q+\left(\frac{1}{s}-\frac{1}{p}\right)B(v_0)r_0^s.
	\]
Considering that $p<q<s$, we concluded that $r_0>0$. Furthermore,  from \eqref{critical point r barra}, it follows 
	\begin{equation*}\label{r(vn) convergence}
	\begin{aligned}
\lim_{n\rightarrow+\infty}\overline{r}(v_n)=\lim_{n\rightarrow+\infty}\left(\frac{A(v_n)(q-p)}{B(v_n)(s-p)}\right)^{1/(s-q)}&=\left(\frac{A(v_0)(q-p)}{B(v_0)(s-p)}\right)^{1/(s-q)}=\overline{r}(v_0),
\end{aligned}
	\end{equation*}
and hence $r_0\geq \bar{r}(v_0)$. Furthermore, 
	\begin{equation*}\label{G(r,vn)}
 \begin{aligned}
     	\lim_{n\rightarrow+\infty} G(\overline{r}(v_n),v_n)=G(\overline{r}(v_0), v_0).
\end{aligned}
\end{equation*}
Since $v_n \in \Omega$, we get 
	\begin{equation*}
		\|v_0\|^p\leq \liminf_{n \rightarrow \infty} \|v_n\|^p \leq \liminf_{n \rightarrow \infty} G(\overline{r}(v_n),v_n)=G(\overline{r}(v_0),v_0).
	\end{equation*}
Assume by contradiction that $v_0\not\in \Omega$, that is, 
	$
		\|v_0\|^p=G(\overline{r}(v_0),v_0)$. Since $\|v_n\|^p=G(r(v_n),v_n)$, taking to the limit we get
$$	
G(\overline{r}(v_0),v_0)=\|v_0\|^p\leq \liminf_{n \rightarrow \infty} \|v_n\|^p=\liminf_{n \rightarrow \infty}G(r(v_n),v_n)=G(r_0,v_0),
$$
which implies that $\overline{r}(v_0)=r_0$ because $\overline{r}(v_0)$ is the global maximum of $G(.,v_0)$. Then, $r(v_n)\rightarrow \overline{r}(v_0)$ and from the definition of $\mathcal{I}$ and \eqref{critical point r barra} we obtain
\begin{align*}
		M=\lim_{n \rightarrow \infty}\mathcal{I}(v_n)&= \left(\frac{1}{p}-\frac{1}{q}\right)A(v_0)\overline{r}(v_0)^q+\left(\frac{1}{s}-\frac{1}{p}\right)B(v_0)\overline{r}(v_0)^s
	\end{align*}
and $(s-p)B(v_0)\bar{r}(v_0)^s=A(v_0)(q-p)\bar{r}(v_0)^q$. As a consequence, we infer that 
\begin{align*}
		M&= A(v_0)\overline{r}(v_0)^q\left[\left(\frac{1}{p}-\frac{1}{q}\right)+\left(\frac{1}{s}-\frac{1}{p}\right)\frac{(q-p)}{s-p}\right]\\
  &=A(v_0)\overline{r}(v_0)^q\frac{(q-p)}{p}\left(\frac{1}{q}-\frac{1}{s}\right)>0
	\end{align*}
because $p<q<s$, which contradicts \eqref{variational problem} and hence we conclude that $v_0 \in \Omega$. 

\noindent\textit{Claim}: $r_0=r(v_0)$.

Assuming that the claim is valid, we can take the limit at 
$$ 
 1=\|v_n\|^p=G(r(v_n),v_n),
 $$
 to obtain
\begin{equation*}
1=A(v_0)r(v_0)^{q-p}-B(v_0)r(v_0)^{s-p}=G(r(v_0),v_0)=\|v_0\|^p.
\end{equation*}
Thus, we conclude that $v_0\in \Omega\cap S^1$ and we also have 
$$
M=\lim_{n\rightarrow\infty} \mathcal{I}(v_n)=\left(\frac{1}{p}-\frac{1}{q}\right)A(v_0)r(v_0)^q+\left(\frac{1}{s}-\frac{1}{p}\right)B(v_0)r(v_0)^s=\mathcal{I}(v_0).
$$
Therefore, by Lemma \ref{fibering method }, $r(v_0)v_0$ is a nonnegative and nontrivial critical point if $I$ in $E$. This completes the proof of Theorem~\ref{Terceiro Exitencia}.

It remains to prove $r_0=r(v_0)$. Since $v_0 \in \Omega$, by Lemma~\ref{Propriedades Omega}, we can choose $\mu_0>0$ such that $\mu_0 v_0 \in \Omega\cap S^1$. By Lemma~\ref{Propriedades Omega} we know that $r( v_0)>\Bar{r}( v_0)$ and $G(r(v_0),v_0)=\|v_0\|^p$. Taking the limit at $\|v_n\|^p=G(r(v_n),v_n)$, we get 
$
\|v_0\|^p\leq G(r_0,v_0)$. Consequently,
$$
G(r(v_0),v_0)=\|v_0\|^p\leq G(r_0,v_0).
$$
Since $G(r,v_0)$ is decreasing for $r\geq \bar{r}(v_0)$ and $r(v_0)>\bar{r}(v_0)$, it follows that $r_0\leq r(v_0)$. We have 
$$
\bar{r}(v_0)\leq r_0\leq r(v_0).
$$
Suppose by contradiction that $r_0<r(v_0)$. Since $G(r,v_0)$ is strictly decreasing for all $r\in (r_0, r(v_0))$, we see that 
\begin{equation*}
		\|v_0\|^p=G(r(v_0),v_0)< G(r,v_0), \quad \forall r\in[r_0,r(v_0)).
	\end{equation*} 
Considering the function
$$
h(r)= I(rv_0), \quad r \in (r_0, r(v_0)),
$$
 a straightforward computation shows that 
 $$
 h'(r)=r^{p-1}\left(\|v_0\|^p-G(r,v_0)\right)<0,
 $$
 which implies that $h$ is strictly decreasing. Thus, we get 
	\begin{align*}
		M=\liminf_{n \rightarrow \infty}I(r(v_n)v_n)
		\geq I(r_0v_0)
		> I(r(v_0)v_0)
		&=I(r(\mu_0 v_0)\mu_0 v_0)= \mathcal{I}(\mu_0 v_0),
	\end{align*}
with $\mu_0 v_0\in \Omega\cap S^1$. This contradicts the definition of $M$ and hence $r_0=r(v_0)$. 
\end{proof}

%%%%%%%%%%%%%%%%%%%%%%%%%%%%%%%%%%%%%%%%%%%%%%%%%%%%%%%%%%%%%%%%%%%%%%%%%%%%%%%%%%%%%%%%%%%%%%%%%%%%%%%%%%%%%%%%%%%%%%%%%

\section{Final comments}
In this section, we explore potential future developments stemming from the results established in this work.

\begin{itemize}
\item Our results demonstrate the existence of solutions for problems involving nonlinearities within the subcritical growth range in terms of new Sobolev embedding proved in the present work. 
It would be interesting to explore the existence of solutions for nonlinearities with corresponding critical growth.
\item We address the case $p=N$ with polynomial growth. It is also important to consider scenarios
where the nonlinearities exhibit exponential growth in the fashion of Trudinger-Moser-type inequalities.
\item We observe that inequality \eqref{hardy with xn} can be used to obtain Sobolev trace type inequality, which allows one to treat problems with nonlinear boundary conditions as in the work \cite{zbMATH01215632}.
\item We observe that the condition $\gamma > p-1$ is sufficient to prove the Hardy type inequality \eqref{hardy with xn}, which aligns with the sufficient condition proven in  \cite{zbMATH05377123}. However, determining a necessary condition on $\gamma$ for \eqref{hardy with xn} remains an open question.
\item If $\gamma\geq p$, we observe that  $\mathcal{D}_\gamma^{1,p}(\mathds{R}^N_+)$ embeds into $ W^{1,p}(\mathds{R}^N_+)$, resulting in bounded state solutions.Therefore, a natural question arises: what happens when 
$p-1<\gamma<p$?
\end{itemize}

%%%%%%%%%%%%%%%%%%%%%%%%%%%%%%%%%%%%%%%%%%%%%%%%%%%%%%%%%%%%%%%%%%%%%%%%%%%%%%%%%%%%%%%%%%%%%%%%%%%%%%%%%%%%%%%%%%%%%%%%%%%%%%%%%%%%%%%%%%% %

%%%%%%%%%%%%%%%%%%%%%%%%%%%%%%%%%%%%%%%%%%%%%%%%%%%%%%%%%%%%%%%%%%%%%%%%%%%%%%%%%%%%%%%%%%%%%%%%%%
	%  Statements and declarations
%%%%%%%%%%%%%%%%%%%%%%%%%%%%%%%%%%%%%%%%%%%%%%%%%%%%%%%%%%%%%%%%%%%%%%%%%%%%%%%%%%%%%%%%%%%%%%%%%%

\begin{flushleft}
	{\bf Funding:}  
	J. M. do \'O acknowledges partial support from CNPq through grants 312340/2021-4, 409764/2023-0, 443594/2023-6, CAPES MATH AMSUD grant 88887.878894/2023-00 and E. Medeiros acknowledges partial support from CNPq through grant 310885/2023-0
	and Para\'iba State Research Foundation (FAPESQ), grant no 3034/2021.  \\
	{\bf Ethical Approval:}  Not applicable.\\
	{\bf Competing interests:}  Not applicable. \\
	{\bf Authors' contributions:}    All authors contributed to the study conception and design. All authors performed material preparation, data collection, and analysis. The authors read and approved the final manuscript.\\
	{\bf Availability of data and material:}  Not applicable.\\
	{\bf Ethical Approval:}  All data generated or analyzed during this study are included in this article.\\
	{\bf Consent to participate:}  All authors consent to participate in this work.\\
	{\bf Conflict of interest:} The authors declare no conflict of interest. \\
	{\bf Consent for publication:}  All authors consent for publication. \\
\end{flushleft}

\bigskip

%%%%%%%%%%%%%%%%%%%%%%%%%%%%%%%%%%%%%%%%%%%%%%%%%%%%%%%%%%%%%%%%%%%%%%%%%%%%%%%%%%%%%%%%%%%%%%%%%%
	%  REFERENCES
%%%%%%%%%%%%%%%%%%%%%%%%%%%%%%%%%%%%%%%%%%%%%%%%%%%%%%%%%%%%%%%%%%%%%%%%%%%%%%%%%%%%%%%%%%%%%%%%%%

 \bibliography{bibliography.bib}
 \bibliographystyle{abbrv}

\end{document}